\newtheorem{theorem}{Theorem}[section]
\newtheorem{lemma}[theorem]{Lemma}
\newtheorem{claim}[theorem]{Claim}
\newtheorem{definition}[theorem]{Definition}
\newtheorem{corollary}[theorem]{Corollary}
\newtheorem{observation}[theorem]{Observation}
\title[The polynomial method for 3-path extendability]{The polynomial method for 3-path extendability of list colourings of planar graphs}
\author{Przemys\l{}aw Gordinowicz}
\address{Institute of Mathematics, Lodz University of Technology, \L{}\'od\'z, Poland}
\email{pgordin@p.lodz.pl}
\author{Pawe\l{} Twardowski}
\address{Institute of Mathematics, Lodz University of Technology, \L{}\'od\'z, Poland}
\email{p.twardowski.7@gmail.com}
\begin{document}
	
\begin{abstract}
We restate Thomassen's theorem of 3-extendability~\cite{ThomassenExt}, an extension of the famous planar 5-choosability theorem, in terms of graph polynomials. This yields an Alon--Tarsi equivalent of 3-extendability. 
\end{abstract}
\keywords{planar graph, list colouring, paintability, Alon-Tarsi number.}
\subjclass[2000]{05C10, 05C15, 05C31}

\vspace{150pt}

\maketitle

\section{Introduction}

In his famous paper~\cite{Thomassen} Thomassen proved that every planar graph is 5-choosable. Actually, to proceed with an inductive argument, he proved the following stronger result.
\begin{theorem}[\cite{Thomassen}]\label{thm:thomassen}
	Let $G$ be any plane near-triangulation (every face except the outer one is a triangle) with outer cycle $C$. Let $x$, $y$ be two consecutive vertices on $C$. Then $G$ can be coloured from any list of colours such that the length of lists assigned to $x$, $y$, any other vertex on $C$ and any inner vertex is 1, 2, 3, and 5, respectively. 
\end{theorem}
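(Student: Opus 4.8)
The plan is to prove the statement by induction on the number of vertices of $G$, following the standard strengthening that keeps two adjacent boundary vertices effectively precoloured. First I would dispose of the list of size $2$ at $y$: since $|L(x)|=1$, say $L(x)=\{c_x\}$, and $|L(y)|=2$, I can fix a colour $c_y\in L(y)\setminus\{c_x\}$ and replace $L(y)$ by $\{c_y\}$. It therefore suffices to prove the variant $(\star)$ in which $x$ and $y$ receive fixed distinct colours (lists of size $1$), every other vertex of $C$ has a list of size at least $3$, and every inner vertex has a list of size at least $5$. The base case is a triangle, where the statement is immediate.

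For the inductive step I would split into two cases according to whether the outer cycle $C=v_1v_2\cdots v_p$, with $x=v_1$ and $y=v_2$, has a chord. If $C$ has a chord $v_iv_j$, then $C$ together with this edge bounds two near-triangulations $G_1$ and $G_2$ meeting exactly along $v_iv_j$, where $G_1$ contains the edge $xy$. I would first colour $G_1$ by the induction hypothesis; this assigns distinct colours to the adjacent vertices $v_i$ and $v_j$. Treating $v_i,v_j$ as the precoloured pair, I would then colour $G_2$ by the induction hypothesis. The two colourings agree on the shared edge, hence combine to an $L$-colouring of $G$.

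If $C$ has no chord, I would look at $v_p$, the neighbour of $x=v_1$ on $C$ other than $y$. Because every inner face is a triangle and $C$ is chordless, the neighbours of $v_p$ form a path $v_1,u_1,\dots,u_m,v_{p-1}$ whose interior vertices $u_1,\dots,u_m$ lie strictly inside $C$ (here $m\ge 1$, since otherwise $v_1$ and $v_{p-1}$ would be adjacent, giving a chord, unless $G$ is the triangle already handled). Deleting $v_p$ yields a near-triangulation $G'=G-v_p$ whose outer cycle still contains the precoloured edge $xy$ and now exposes $u_1,\dots,u_m$. Since $|L(v_p)|\ge 3$, I can pick two colours $\alpha,\beta\in L(v_p)\setminus\{c_x\}$ and delete them from each $L(u_i)$, leaving lists of size at least $3$ on the newly exposed vertices while keeping all other lists unchanged. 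By the induction hypothesis $G'$ has an $L$-colouring, which I would then extend to $v_p$: its neighbours $v_1,u_1,\dots,u_m$ all avoid both $\alpha$ and $\beta$ by construction, so the only possible conflict is with $v_{p-1}$, and hence at least one of $\alpha,\beta$ remains available for $v_p$.

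The main obstacle I anticipate is the chordless case: one must verify carefully that the neighbourhood of $v_p$ really is a path of interior vertices, that deleting $v_p$ preserves both the near-triangulation structure and the required list sizes, and that exactly two ``safe'' colours survive at the final step so that $v_p$ can always be coloured. The chord case, by contrast, is essentially bookkeeping once the two pieces $G_1$ and $G_2$ are correctly identified.
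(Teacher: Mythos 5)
Your proposal is correct and is essentially Thomassen's original inductive argument --- the reduction of the size-$2$ list at $y$ to a precolouring, the chord/chordless dichotomy, and the reservation of two colours $\alpha,\beta \in L(v_p)\setminus\{c_x\}$ that are deleted from the lists of the interior fan $u_1,\dots,u_m$ before deleting $v_p$. This is exactly the proof the paper relies on by citation (it states Theorem~\ref{thm:thomassen} as a known result from~\cite{Thomassen} and does not reprove it), so there is nothing to reconcile beyond noting the approaches coincide.
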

Taking it differently, $x$ and $y$ can be precoloured in different colours. The situation when a plane graph can be list-coloured from a particular list assignment when some of the vertices on the outer cycle are appropriately precoloured is generally referred to as \emph{extendability}. The assertion of Theorem~\ref{thm:thomassen} can be translated to extendability equivalently as the fact that every planar graph is \emph{2-extendable with respect to vertices $x,y$}, or \emph{2-extendable with respect to the path $\overrightarrow{xy}$}, for any two neighbouring vertices $x,y \in V(C(G))$.

In \cite{ThomassenExt}, Thomassen further explored the concept of path extendability, introducing \emph{3-extendability with respect to the path $\overrightarrow{xyz}$}, or \emph{3-path extendability}. He found out that while it is not true that every plane graph is 3-extendable with respect to any outer cycle path $v_kv_1v_2$, the configuration that prevents such extendability is easily defined. He called such configurations \emph{generalized wheels} (essentially stars inserted into cycles, claws linked by a path, and their combinations, they will be precisely defined and discussed in Section~\ref{sec:wheels}), and proved the following:
\begin{theorem}[\cite{ThomassenExt}]\label{thm:thomassenExt}
	Let $G$ be any plane near-triangulation with outer cycle $C$, and let $v_k$, $v_1$, $v_2$ be three consecutive vertices on $C$. Suppose $v_1,v_2$ and $v_k$ are precoloured, other vertices on $C$ are given lists of length at least 3 and all other vertices are given lists of length at least 5. Then $G$ can be list-coloured unless it contains a subgraph $G'$ which is a generalized wheel whose principal path
	is $v_kv_1v_2$, and all vertices on the outer cycle of $G'$ are on $C$ and have precisely three available
	colours.
\end{theorem}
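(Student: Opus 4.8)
The plan is to argue by induction on $|V(G)|$, phrased through a minimal counterexample: assume $G$ is uncolourable and contains no generalized wheel with principal path $v_kv_1v_2$ whose outer‑cycle vertices all lie on $C$ and have exactly three available colours, and derive a contradiction. Before starting I would normalise the instance. Since surplus colours only help, assume every non‑precoloured vertex of $C$ has a list of size exactly $3$ and every inner vertex a list of size exactly $5$, and assume the precolouring of $v_k,v_1,v_2$ is proper (otherwise no extension exists and the precoloured path is a degenerate obstruction). The smallest cases, where $G$ is a single triangle or $C$ is short, are checked directly.

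First I would dispose of chords. A chord $v_iv_j$ splits $G$ into two near‑triangulations glued along $v_iv_j$. If the chord is not incident to $v_1$, then the principal path $v_kv_1v_2$ lies entirely in one piece, say $G_1$; I would colour $G_1$ by the inductive hypothesis (a smaller $3$‑path instance with the same principal path), which also fixes $\mathrm{col}(v_i)$ and $\mathrm{col}(v_j)$, and then colour the other piece $G_2$ by Thomassen's Theorem~\ref{thm:thomassen}, using that $v_i,v_j$ are now precoloured and consecutive on $\partial G_2$. A wheel produced on the $G_1$ side is at once a wheel of $G$ on $v_kv_1v_2$ with outer cycle on $C$, contradicting minimality; hence a minimal counterexample has no chords except possibly through $v_1$. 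A chord $v_1v_j$ is the delicate case: the part containing $v_1v_2$ is coloured by Theorem~\ref{thm:thomassen}, fixing $\mathrm{col}(v_j)$, after which the remaining part becomes a $3$‑path instance but with the \emph{shifted} principal path $v_jv_1v_k$. Reconciling a wheel anchored at $v_jv_1v_k$ with the required anchor $v_kv_1v_2$, exploiting the freedom in $\mathrm{col}(v_j)$ coming from the various colourings of the first part, is one of the two technical hearts of the argument.

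For the chordless case I would imitate Thomassen's deletion step, but applied at a non‑precoloured boundary vertex adjacent to an endpoint of the principal path, say $v_3$ (the neighbour of $v_2$); the reduction at $v_{k-1}$ is symmetric. Because $|L(v_3)|\ge 3$, I can reserve two colours $a,b\in L(v_3)\setminus\{\mathrm{col}(v_2)\}$, delete $v_3$, and remove $a,b$ from the lists of the inner neighbours $q_1,\dots,q_r$ of $v_3$ (which drop from size $5$ to size $3$). The resulting graph $G'$ is a strictly smaller $3$‑path instance with the \emph{same} precoloured principal path $v_kv_1v_2$, whose new outer cycle replaces $v_3$ by the path $q_1\cdots q_r$. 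If $G'$ is colourable I would extend to $v_3$: its neighbours $v_2,q_1,\dots,q_r$ all avoid $\{a,b\}$ by construction, so only the single cycle‑neighbour $v_4$ can block a reserved colour, leaving at least one of $a,b$ free, and $G$ is coloured, contradicting the choice of counterexample.

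Consequently $G'$ is uncolourable, so by induction it contains a generalized wheel $W$ on $v_kv_1v_2$ whose outer vertices lie on the new outer cycle and have exactly three colours. The main obstacle, and the real content of the theorem, is to lift $W$ back to $G$. If $W$ avoids the vertices $q_1,\dots,q_r$ it is already a wheel of $G$ with outer cycle on $C$, contradicting minimality; so $W$ must pass through some $q_i$. In $G$ these $q_i$ are inner vertices of size $5$, hence not on $C$, but they lie in the link of the deleted $v_3$, which is adjacent to the precoloured $v_2$. I therefore expect to re‑incorporate $v_3$ and re‑anchor the portion of $W$ running through the $q_i$ onto $C$, enlarging $W$ into a genuine generalized wheel of $G$; this is exactly where the flexibility of the definition (stars inserted into cycles and claws linked along a path) is needed, and where the ``exactly three available colours'' bookkeeping must be carried through the reserved colours $a,b$. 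Establishing this re‑anchoring, together with the shifted‑anchor reconciliation of the chord‑through‑$v_1$ case, is the part I expect to be hardest.
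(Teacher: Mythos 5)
Your proposal is an outline rather than a proof: the two steps you yourself flag as the ``technical hearts'' are not technicalities but the entire content of the theorem, and you give no argument for either. Worse, the route you chose for the main reduction --- delete $v_3$ with two reserved colours and then try to lift the resulting obstruction back to $G$ --- is precisely the point where the naive Thomassen-style induction is known to break, and neither Thomassen's proof in \cite{ThomassenExt} nor this paper's polynomial analogue (Theorem~\ref{thm:main}, whose proof mirrors Thomassen's) proceeds that way. When the generalized wheel $W$ of $G'=G-v_3$ passes through the former inner neighbours $q_1,\dots,q_r$ of $v_3$, it is simply not an obstruction for $G$ (the $q_i$ do not lie on $C$), and there is no ``re-anchoring'': what must be proved in that case is that $G$ is colourable after all, e.g.\ for a different choice of the reserved pair $\{a,b\}$, and this requires knowing exactly which precolourings of a principal path extend to a generalized wheel (Thomassen's Lemma~1 in \cite{ThomassenExt}), a lemma your sketch never states or proves. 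The actual argument replaces your single deletion step by a chain of structural claims about a minimal counterexample --- no separating triangle or separating $4$-cycle, an interior vertex adjacent to $v_i$ and $v_j$ is adjacent to every $v_r$ between them, no interior vertex adjacent to both $v_2$ and $v_k$, $\deg(v_3),\deg(v_{k-1})\ge 4$, and the existence of an interior vertex adjacent to both $v_3$ and $v_{k-1}$ (the analogues of Claims~\ref{claim:c2}--\ref{claim:c7} here) --- which force the counterexample into one explicit configuration that is then refuted directly. None of these claims, nor the wheel-extension lemma, appears in your proposal.

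Your chord-through-$v_1$ difficulty, by contrast, has a short resolution you missed, and it is the very reason multiple wheels belong to the definition of generalized wheels: if the chord is $v_1v_j$ and \emph{both} pieces contained generalized wheels anchored at $\overrightarrow{v_jv_1v_2}$ and $\overrightarrow{v_kv_1v_j}$ respectively, then identifying them along the edge $v_1v_j$ would produce a multiple wheel anchored at $\overrightarrow{v_kv_1v_2}$ in $G$, contrary to hypothesis. Hence one piece is wheel-free; colour the \emph{other} piece first by Theorem~\ref{thm:thomassen} (it has only the two precoloured vertices $v_1$ and one of $v_2,v_k$ on its boundary), thereby fixing the colour of $v_j$, and then colour the wheel-free piece by the induction hypothesis, its principal path now fully precoloured. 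No reconciliation over the varying colour of $v_j$ is needed; this is exactly how Claim~\ref{claim:c1} of the paper handles it in polynomial form. Finally, note that your opening normalisation (truncating all boundary lists to size exactly $3$) is not innocent: truncation preserves uncolourability, but it can create a wheel whose vertices have ``precisely three available colours'' only with respect to the truncated lists, which is no obstruction for the original instance; so a minimal counterexample need not survive this normalisation, and the reduction should be avoided or justified.
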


The Thomassen's 5-choosability theorem was translated into the language of graph polynomials by Zhu in~\cite{Zhu}, showing that Alon-Tarsi number of any planar graph $G$ satisfies $AT(G) \le 5$. His approach utilizes a certain polynomial arising directly from the structure of the graph. This \textit{graph polynomial} is defined as:
$$P(G) = \prod_{uv \in E(G), u<v}(u-v),$$
where the relation $<$ fixes an arbitrary orientation of graph $G$. Here we understand $u$ and $v$ both as the vertices of $G$ and variables of $P(G)$, depending on the context. Notice that the  orientation affects the sign of the polynomial only. Therefore individual monomials and the powers of the variables in each monomials are orientation-invariant. We refer the reader to~\cite{combnull, AlonTarsi, Schautz} for the connection between list colourings and graph polynomials. The use of the polynomials has already proven itself to be a useful and powerful method of dealing with list-colouring problems, like in~\cite{GrytczukZhu} where Grytczuk and Zhu showed that every planar graph $G$ contains a matching $M$ such that $AT(G-M) \le 4$, for which the proof using standard methods is not known. The approach of Zhu may be described in the following form, analogous to Theorem~\ref{thm:thomassen}. 
\begin{theorem}[\cite{Zhu}]\label{thm:zhu}
	Let $G$ be any plane near-triangulation, let $e = xy$ be a boundary edge of $G$. Denote other boundary vertices by $v_1, \dots, v_k$ and inner vertices by $u_1, \dots, u_m$. Then the graph polynomial of $G-e$ contains a non-vanishing monomial of the form $\eta x^0y^0v_1^{\alpha_1}\dots v_k^{\alpha_k} u_1^{\beta_1}\dots u_m^{\beta_m}$ with $\alpha_i \le 2, \beta_j \le 4$ for $i \le k$, $j \le m$.
\end{theorem}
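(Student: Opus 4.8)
The plan is to prove the statement by induction on $|V(G)|$, following the inductive skeleton of Thomassen's original argument (Theorem~\ref{thm:thomassen}) but carried out at the level of the graph polynomial. Throughout I read the conclusion through the Alon--Tarsi dictionary: call a monomial $x^0y^0\prod_i v_i^{\alpha_i}\prod_j u_j^{\beta_j}$ \emph{admissible} if $\alpha_i\le 2$ and $\beta_j\le 4$, so that the claim is exactly that $P(G-e)$ carries a non-vanishing admissible monomial, with the exponent budgets $0,2,4$ mirroring Thomassen's list sizes $1,3,5$. Two elementary algebraic facts drive the induction: deleting the edge $e=xy$ divides out a single factor, $P(G-e)=P(G)/(x-y)$, and deleting a vertex $w$ factors off the product of its incident edge-differences, $P(G)=\pm\,P(G-w)\prod_{\nu\sim w}(w-\nu)$. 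The base cases (a single edge, a triangle) are immediate, and in the inductive step I split according to whether the outer cycle $C$ has a chord.

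If $C$ has a chord $ab$, it divides $G$ into two near-triangulations $G_1\supseteq e$ and $G_2$ meeting exactly in the edge $ab$. Since $E(G_1)\cap E(G_2)=\{ab\}$ and the vertex sets overlap only in $a,b$, the two facts above combine to give $P(G-e)=\pm\,P(G_1-e)\cdot P(G_2-ab)$. I apply the induction hypothesis to $G_1$ with respect to $e$ and to $G_2$ with respect to $ab$ (now playing the role of the precoloured edge, so $a,b$ receive exponent $0$ in the resulting monomial $M_2$), obtaining admissible monomials $M_1,M_2$, and multiply them. The only shared variables are $a,b$, each with exponent $0$ in $M_2$ and at most $2$ in $M_1$; since $a,b$ lie on $C$ their combined budget is $2$, so $M_1M_2$ respects every bound.

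If $C$ is chordless, write $C=v_1\cdots v_p$ with $e=v_1v_2$, $x=v_1$, $y=v_2$, and let $v_p$ (adjacent to $x$) have neighbours $v_1,w_1,\dots,w_t,v_{p-1}$ in rotational order, each $w_i$ being an inner vertex. Deleting $v_p$ yields a near-triangulation $G'$ with outer cycle $v_1w_1\cdots w_tv_{p-1}\cdots v_2$, on which the $w_i$ have become boundary vertices, and $P(G-e)=P(G'-e)\cdot\prod_{\nu\sim v_p}(v_p-\nu)$. Given an admissible monomial $M'$ of $P(G'-e)$ by induction, I build the target monomial by selecting $v_p$ from the two extreme factors $(v_p-v_1)$ and $(v_p-v_{p-1})$ and the inner variable $w_i$ from each middle factor $(v_p-w_i)$. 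This gives $v_p$ exponent $2$ (its boundary budget), leaves $x=v_1$ at exponent $0$ (forced, since its budget is $0$) and $v_{p-1}$ at its $M'$-value $\le 2$, and raises each $w_i$ from its $G'$-boundary value $\le 2$ to at most $3\le 4$, matching its inner budget in $G$; every other vertex is untouched by the fan factor and keeps its admissible $M'$-exponent. The budget arithmetic closes precisely because an inner vertex of $G$ that becomes a boundary vertex of $G'$ drops from budget $4$ to $2$, leaving exactly the room for the $+1$ contributed by the fan.

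The one genuinely delicate point --- and the step I expect to be the main obstacle --- is non-vanishing: in both cases the target is a single monomial of a \emph{product} of polynomials sharing variables, and its coefficient is an a priori cancellable sum over factorisations. The essential leverage is the precoloured variable. In the chordless case the coefficient of $v_p^2$ in the fan factor is, up to sign, the elementary symmetric polynomial $e_t(v_1,w_1,\dots,w_t,v_{p-1})$, and every monomial of it containing $v_1=x$ is annihilated when paired against $P(G'-e)$, because the target has $x$-degree $0$ while $P(G'-e)$ has no negative exponents. This kills all but the terms omitting $x$, leaving only the intended term $w_1\cdots w_t$ (which pairs with $M'$) together with a controlled family obtained by transferring one unit of degree from $v_{p-1}$ onto some $w_i$. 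To rule out cancellation among the survivors I would strengthen the inductive hypothesis so as to pin the exponent of the far fan-endpoint $v_{p-1}$ --- equivalently, to guarantee an admissible monomial whose degree in a distinguished boundary vertex is minimal, ideally $0$ --- whereupon the transfer terms would require a non-existent negative exponent and the coefficient collapses to $\pm\operatorname{coeff}_{M'}(P(G'-e))\ne 0$. The exponent-$0$ variables $a,b$ play the same annihilating role in the chord case. Making this strengthened hypothesis self-consistent across both cases, so that each recursive call can be asked to produce a monomial of the refined form, is where the real work lies; the degree-budget accounting above is then routine.
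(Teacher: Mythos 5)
You should note at the outset that the paper contains no proof of Theorem~\ref{thm:zhu}: it is quoted from~\cite{Zhu}, so your attempt has to be judged on its own merits. Your chord case is correct, and for a reason worth making explicit: graph polynomials are homogeneous, so a monomial of $P(G_2-ab)$ is determined by its exponents outside $\{a,b\}$ together with its total degree; hence once $M_2$ has exponent $0$ at both $a$ and $b$, the factorisation of $M_1M_2$ across the product $P(G_1-e)\cdot P(G_2-ab)$ is unique and no cancellation is possible. This is exactly the mechanism behind Lemma~\ref{lem:l2} and Corollary~\ref{cor:00_2} of the paper.

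The chordless case, however, has a genuine and fatal gap, and the repair you propose cannot work. Consider the $6$-wheel: rim $v_1\dots v_6$, hub $u$, $e=v_1v_2$. The outer cycle is chordless, $v_p=v_6$ has neighbours $v_1,u,v_5$, so $t=1$, $w_1=u$, $v_{p-1}=v_5$, and $G'=G-v_6$ is a broken wheel all of whose vertices lie on $C(G')$. Now $P(G'-e)$ is homogeneous of degree $8$, and the inductive hypothesis caps every exponent at $2$, so $M'=v_3^2v_4^2v_5^2u^2$ is the \emph{only} monomial induction can return (it is indeed non-vanishing, with coefficient $\pm 1$). Your lifted monomial $M'v_6^2u=v_3^2v_4^2v_5^2v_6^2u^3$ has coefficient $\pm\bigl(c(M')+c(M'u/v_5)\bigr)$ in $P(G-e)$, where $c(\cdot)$ denotes coefficients in $P(G'-e)$, and direct expansion shows that $c(M')$ and $c(M'u/v_5)$ are both $\pm 1$ and of \emph{opposite} sign: the lift vanishes. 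So the cancellation you flag really occurs, and since $M'$ was unique, no selection rule among admissible monomials can avoid it. Your proposed strengthening --- demanding that $M'$ have exponent $0$ at $v_{p-1}=v_5$ --- is unsatisfiable here by pure edge-counting: with that constraint the available budgets sum to at most $2+2+2=6<8$ (at most $7$ even if $u$ is granted the relaxed bound $3$ that your lift tolerates), which is the same counting obstruction that proves Corollary~\ref{cor:no_000_broken}. Theorem~\ref{thm:zhu} does of course hold for the $6$-wheel, but its witnesses, e.g. $u^4v_3^2v_4^2v_5^2v_6$, have hub exponent $4$ and therefore are not lifts of any $G'$-admissible monomial; producing witnesses of this different shape --- letting exponents on the fan drop below their budgets and pushing the surplus onto interior vertices --- is precisely where Zhu's induction does its real work, and it is the part your sketch leaves open.
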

The main tool connecting graph polynomials with list colourings is Combinatorial Nullstellensatz \cite{combnull}. It implies that for every non-vanishing monomial of $P(G)$, if we assign to each vertex of $G$ a list of length greater than the exponent of corresponding variable in that monomial, then such list assignment admits a proper colouring.

The aim of this paper is to provide the graph polynomial analogue of Theorem~\ref{thm:thomassenExt}. The concept of extendability needs a bit of adjustment in terms of the graph polynomial. Obviously there is no way of translating "precolouring" directly into any property of the monomial. By the definition of the polynomial, it is also impossible for the neighbouring vertices to simultaneously have exponents equal to 0 in single monomial --- this is consistent with the list colouring, as the list assignment where two neighboring vertices have lists of length 1 can be invalidated by those lists being identical. In~\cite{ThomassenExt}, Thomassen somehow liberally equals precolouring with lists of length one, with the necessity of differing lists left in the context. Notice, however, the missing edge in the statement of Theorem~\ref{thm:zhu}, which gives the necessary freedom, while still directly implying the full analogue of Theorem~\ref{thm:thomassen}. In the same spirit, in this paper 3-path extendability will be investigated for graphs with missing edges.

Throughout the paper, $\overrightarrow{v_av_b \dots v_k}$ will denote a path in a graph on vertices $v_a$, $v_b, \dots, v_k$, in that order, and $G-\overrightarrow{v_av_b \dots v_k}$ will be understood as deletion of all edges of the path $\overrightarrow{v_av_b \dots v_k}$ from $G$ (while preserving the vertices). To avoid confusion, we will distinguish between two notions and notations of the interior of the cycle. If $C$ is a cycle in graph $G$, then $Int(C)$ will denote the subgraph enclosed by $C$ including the cycle itself, while $int(C)$ will denote only the subgraph inside the cycle, excluding the vertices of $C$ and incident edges. Similarly, $int(G)$ will denote a subgraph of $G$ induced by the vertices not on the outer cycle. All considered graphs are simple, undirected, and finite. For background in graph theory, see~\cite{West}. 

\section{Technical remarks}

We will begin with a minor technical detail needed for the proof of the main result. The starting point is the following lemma which appeared in~\cite{GT2021}

\begin{lemma}\label{lem:l2}\cite[Lemma 3.5]{GT2021}
	Let $G,G'$ be any two graphs, such that $V(G) = \lbrace x, v_1, \dots, v_n \rbrace $, $V(G') = \lbrace x, u_1, \dots, u_m \rbrace$, $V(G) \cap V(G')=\lbrace x \rbrace$. Let $G'' = G \cup G'$. Suppose there are non-vanishing monomials $\eta x^\alpha \Pi v_i^{\alpha_i}$ and $\eta' x^\beta \Pi u_j^{\beta_j}$ in $P(G)$ and $P(G')$ respectively. Then in $P(G'')$ there is a non-vanishing monomial
	$$A(G'') = \eta \eta' x^{\alpha + \beta}  \prod v_i^{\alpha_i} \prod u_j^{\beta_j}.$$
\end{lemma}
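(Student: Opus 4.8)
The plan is to exploit the fact that $G$ and $G'$ overlap in the single vertex $x$, so that the graph polynomial of the union factors. First I would fix an orientation of $G''$ that restricts to the given orientations on $G$ and on $G'$; since the edge set of $G''$ is the disjoint union $E(G) \sqcup E(G')$, grouping the linear factors of $P(G'')$ according to which part they come from yields the factorization
$$P(G'') = P(G) \cdot P(G').$$
Any other orientation changes only the overall sign, which is irrelevant to whether a monomial vanishes. Crucially, the only variable common to the two factors is $x$: the polynomial $P(G)$ involves only $x, v_1, \dots, v_n$, and $P(G')$ involves only $x, u_1, \dots, u_m$.

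Next I would analyse the coefficient of the target monomial $x^{\alpha+\beta}\prod v_i^{\alpha_i}\prod u_j^{\beta_j}$ in this product. Expanding $P(G)\cdot P(G')$, a monomial of $P(G)$ must supply all of the $v_i$-powers and a monomial of $P(G')$ all of the $u_j$-powers, since these two variable sets are disjoint. Hence the only pairs that can produce the target monomial are those in which the $P(G)$-factor carries $v$-exponents exactly $(\alpha_i)$ and the $P(G')$-factor carries $u$-exponents exactly $(\beta_j)$, while the two $x$-exponents sum to $\alpha+\beta$. A priori this is a sum over several such pairs and could suffer cancellation; ruling that out is the heart of the argument.

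The key observation that removes this obstacle is homogeneity: $P(G)$ is a product of $|E(G)|$ linear forms, hence homogeneous of degree $|E(G)|$, and likewise $P(G')$ is homogeneous of degree $|E(G')|$. Therefore any monomial of $P(G)$ with $v$-exponents $(\alpha_i)$ has its $x$-exponent forced to equal $|E(G)| - \sum_i \alpha_i = \alpha$, and similarly every monomial of $P(G')$ with $u$-exponents $(\beta_j)$ has $x$-exponent exactly $\beta$. Thus there is a unique contributing pair, namely the two given monomials themselves, and the coefficient of $x^{\alpha+\beta}\prod v_i^{\alpha_i}\prod u_j^{\beta_j}$ in $P(G'')$ is exactly $\eta\eta'$ (up to the irrelevant orientation sign). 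Since $\eta,\eta'\neq 0$, this monomial is non-vanishing, which is precisely the assertion.
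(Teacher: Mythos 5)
Your proof is correct and complete: the factorization $P(G'') = P(G)\cdot P(G')$ over the single shared vertex, combined with the homogeneity of each factor (degree $|E(G)|$, resp.\ $|E(G')|$) to force the $x$-exponents and thereby rule out any cancellation among contributing pairs, is exactly the right argument. Note that this paper does not actually prove the lemma --- it quotes it from \cite[Lemma 3.5]{GT2021} --- so there is no in-paper proof to compare against; your argument is the natural self-contained proof of the cited result and isolates precisely the point (potential cancellation) that needs justification.
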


As a consequence of Lemma~\ref{lem:l2}, we obtain the following corollaries.

\begin{corollary}\label{cor:00}
	Let $G,G'$ be any two graphs, such that $V(G) = \lbrace x, y, v_1, \dots, v_n \rbrace $, $V(G') = \lbrace x, y, u_1, \dots, u_m \rbrace$, $V(G) \cap V(G')=\lbrace x,y \rbrace$ and $E(G) \cap E(G')=\lbrace xy \rbrace$. Let $G'' = G \cup G'$. Suppose there are non-vanishing monomials $\eta x^0 y^\alpha \Pi v_i^{\alpha_i}$ and $\eta' x^0 y^\beta \Pi u_j^{\beta_j}$ in $P(G)$ and $P(G')$ respectively. Then in $P(G'')$ there is a non-vanishing monomial
	$$A(G'') = \eta \eta' x^0 y^{\alpha + \beta - 1}  \prod v_i^{\alpha_i} \prod u_j^{\beta_j}.$$
\end{corollary}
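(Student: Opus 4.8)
The plan is to reduce the statement to the edge-disjoint situation of Lemma~\ref{lem:l2} by peeling off the common edge $xy$. Since $xy\in E(G)\cap E(G')$, each of $P(G)$ and $P(G')$ factors through the linear form coming from that edge: writing everything with respect to an orientation in which $x<y$, one has $P(G)=(x-y)\,P(G-xy)$ and $P(G')=(x-y)\,P(G'-xy)$. First I would use these factorizations to transfer the hypotheses to the edge-deleted graphs. Because the given monomial $\eta x^0 y^\alpha\prod v_i^{\alpha_i}$ has $x$-exponent $0$, it can only arise from the $-y\cdot P(G-xy)$ part of $(x-y)P(G-xy)$; comparing coefficients shows that $P(G-xy)$ contains the non-vanishing monomial $-\eta\,x^0 y^{\alpha-1}\prod v_i^{\alpha_i}$, and symmetrically $P(G'-xy)$ contains $-\eta'\,x^0 y^{\beta-1}\prod u_j^{\beta_j}$. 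Here $\alpha\ge 1$ and $\beta\ge 1$ automatically, since $x$ and $y$ are adjacent and cannot both carry exponent $0$ in a non-vanishing monomial.

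The second step is to multiply these two polynomials. Since $E(G)\cap E(G')=\{xy\}$, the edge sets $E(G-xy)$ and $E(G'-xy)$ are disjoint, so $P\big((G-xy)\cup(G'-xy)\big)=P(G-xy)\cdot P(G'-xy)$, exactly as in the setting of Lemma~\ref{lem:l2}. I would then isolate the coefficient of $x^0 y^{\alpha+\beta-2}\prod v_i^{\alpha_i}\prod u_j^{\beta_j}$ in this product. The $v$-variables occur only in the first factor and the $u$-variables only in the second, so any contributing pair must consist of a monomial of $P(G-xy)$ with $v$-part $\prod v_i^{\alpha_i}$ and a monomial of $P(G'-xy)$ with $u$-part $\prod u_j^{\beta_j}$; forcing the total $x$-exponent to equal $0$ forces each factor to have $x$-exponent $0$ separately; and once the $x$-exponent and the $v$- (resp. $u$-) exponents are fixed, the total-degree identity for a graph polynomial pins down the $y$-exponent uniquely. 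Thus the only contribution is the product of the two monomials found above, and the coefficient is $(-\eta)(-\eta')=\eta\eta'\neq 0$.

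Finally I would reattach the edge $xy$. Since $xy$ appears exactly once in $G''$, we have $P(G'')=(x-y)\,P(G''-xy)$ with $G''-xy=(G-xy)\cup(G'-xy)$, so multiplying the monomial just obtained by $-y$ produces $x^0 y^{\alpha+\beta-1}\prod v_i^{\alpha_i}\prod u_j^{\beta_j}$; the $+x$ part of $(x-y)$ cannot contribute to an $x^0$ term, so no cancellation occurs and the coefficient is $\pm\eta\eta'\neq 0$. I expect the main obstacle to be the bookkeeping in the middle step: justifying that fixing the $v$- and $u$-parts together with the condition $x^0$ really determines a unique monomial in each factor. This is precisely where sharing two vertices rather than one matters, and where the hypothesis $x^0$ does the work that the single-shared-vertex degree count does in Lemma~\ref{lem:l2}. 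The signs are orientation-dependent and, as noted after the definition of $P(G)$, immaterial to whether a monomial is non-vanishing, so the displayed coefficient $\eta\eta'$ is to be read up to the sign fixed by the chosen orientation of $xy$.
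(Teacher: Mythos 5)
Your proof is correct, but it follows a genuinely different route from the paper's. The paper deletes the \emph{vertex} $x$: because both given monomials have $x$-exponent $0$, passing to $G_0=G-x$ and $G_0'=G'-x$ costs one exponent on $y$ and on each neighbour of $x$; the vertex-deleted graphs then share only the single vertex $y$, so Lemma~\ref{lem:l2} applies verbatim, and $x$ is re-inserted at the end using $N_{G''}(x)=N_G(x)\cup N_{G'}(x)$, which restores the lowered exponents and lifts $y$ to $\alpha+\beta-1$. You instead delete the \emph{edge} $xy$ from both graphs and multiply the polynomials directly; this is legitimate because the product formula needs only edge-disjointness (your phrase ``exactly as in the setting of Lemma~\ref{lem:l2}'' is loose on this point, since $G-xy$ and $G'-xy$ still share \emph{two} vertices, but you acknowledge and repair this yourself), and the real work --- ruling out cancellation in the product --- is done by your homogeneity argument: once the $x$-exponent is forced to $0$ and the $v$- resp.\ $u$-parts are fixed, the $y$-exponent of each factor is pinned down by total degree, so exactly one pair of monomials contributes. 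The paper's route is shorter given that Lemma~\ref{lem:l2} is already in hand, and it keeps all bookkeeping in terms of the neighbourhood of $x$; yours is self-contained, avoids tracking which $v_i$ are adjacent to $x$, and its middle step (edge-disjoint graphs sharing $x,y$, both monomials having $x^0$) is precisely a proof of Corollary~\ref{cor:00_2}, which the paper states without proof. One remark on signs: with the edge $xy$ oriented as $(x-y)$ in all three graphs, your computation actually yields $-\eta\eta'$ as the final coefficient (already for $G=G'=K_2$ on $\lbrace x,y\rbrace$ the coefficient of $x^0y^1$ in $P(G'')$ is $-1$ while $\eta\eta'=+1$), so the sign displayed in the statement is convention-dependent; your closing caveat that $\eta\eta'$ should be read up to the orientation-dependent sign is exactly the right reading, and indeed the paper itself silently drops signs at its vertex-deletion and re-insertion steps.
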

\begin{proof}
	Consider $G_0''=G''-x$. It has a cutvertex $y$ and splits on it into $G_0=G-x$ and $G_0'=G'-x$. As $P(G)$ contained a non-vanishing monomial $\eta x^0 y^\alpha \Pi v_i^{\alpha_i}$, $P(G_0)$ contains non-vanishing
	$$\eta y^{\alpha-1} \prod_{v_i \nsim x} v_i^{\alpha_i} \prod_{v_i \sim x} v_i^{\alpha_i-1},$$
	and similarly $P(G_0')$ contains a non-vanishing monomial
	$$\eta' y^{\beta-1} \prod_{u_j \nsim x} u_j^{\beta_j} \prod_{u_j \sim x} u_j^{\beta_j-1}.$$
	Therefore, by Lemma~\ref{lem:l2}, $P(G_0'')$ contains a non-vanishing monomial
	$$\eta \eta' y^{\alpha + \beta - 2} \prod_{v_i \nsim x} v_i^{\alpha_i} \prod_{v_i \sim x} v_i^{\alpha_i-1} \prod_{u_j \nsim x} u_j^{\beta_j} \prod_{u_j \sim x} u_j^{\beta_j-1},$$
	and as
	$$N_{G''}(x)=N_{G}(x) \cup N_{G'}(x),$$
	in $P(G'')$ contains a non-vanishing monomial
	$$\eta \eta' x^0 y^{\alpha + \beta - 1} \prod v_i^{\alpha_i} \prod u_j^{\beta_j}.$$
\end{proof}
\begin{corollary}\label{cor:00_2}
	Let $G,G'$ be any two graphs, such that $V(G) = \lbrace x, y, v_1, \dots, v_n \rbrace $, $V(G') = \lbrace x, y, u_1, \dots, u_m \rbrace$, $V(G) \cap V(G')=\lbrace x,y \rbrace$ and $xy \in E(G)$, $xy \notin E(G')$. Let $G'' = G \cup G'$. Suppose there are non-vanishing monomials $\eta x^0 y^\alpha \Pi v_i^{\alpha_i}$ and $\eta' x^0 y^\beta \Pi u_j^{\beta_j}$ in $P(G)$ and $P(G')$ respectively. Then in $P(G'')$ there is a non-vanishing monomial
	$$A(G'') = \eta \eta' x^0 y^{\alpha + \beta}  \prod v_i^{\alpha_i} \prod u_j^{\beta_j}.$$
\end{corollary}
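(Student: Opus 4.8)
The cleanest route is to reduce this directly to Corollary~\ref{cor:00} by artificially supplying the missing edge $xy$ to $G'$. Set $\tilde{G}' = G' + xy$, the graph obtained from $G'$ by adding the edge $xy$ (legitimate since $xy \notin E(G')$). Then $P(\tilde{G}') = \pm P(G')\,(x - y)$, and the pair $G, \tilde{G}'$ now satisfies the hypotheses of Corollary~\ref{cor:00}: they share exactly the vertices $x,y$, and $E(G)\cap E(\tilde{G}') = \{xy\}$, because any common edge must join two vertices of $V(G)\cap V(G') = \{x,y\}$, so $E(G)\cap E(G') = \varnothing$ while $xy$ now lies in both. Moreover $G \cup \tilde{G}' = G''$, since the only edge added, $xy$, already belongs to $E(G) \subseteq E(G'')$.

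First I would produce a suitable $x^0$-monomial for $P(\tilde{G}')$. Multiplying the given monomial $\eta' x^0 y^\beta \prod u_j^{\beta_j}$ of $P(G')$ by the factor $(x-y)$ yields the two terms $\eta' x^1 y^\beta \prod u_j^{\beta_j}$ and $-\eta' x^0 y^{\beta+1}\prod u_j^{\beta_j}$; only the second has $x$-degree $0$. I would check that its coefficient in $P(\tilde{G}')$ is genuinely nonzero: the only monomial of $P(G')$ that can contribute to $x^0 y^{\beta+1}\prod u_j^{\beta_j}$ after multiplication by $(x-y)$ is $x^0 y^{\beta}\prod u_j^{\beta_j}$ itself (through its $-y$ part), since no monomial of $P(G')$ has negative $x$-degree. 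Hence $P(\tilde{G}')$ contains the non-vanishing monomial $-\eta' x^0 y^{\beta+1}\prod u_j^{\beta_j}$; in effect, adding the edge has raised the $y$-exponent by one, from $\beta$ to $\beta + 1$.

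Then I would apply Corollary~\ref{cor:00} to $G$ and $\tilde{G}'$, with the non-vanishing monomials $\eta x^0 y^\alpha \prod v_i^{\alpha_i}$ and $-\eta' x^0 y^{\beta+1}\prod u_j^{\beta_j}$. It produces in $P(G\cup\tilde{G}') = P(G'')$ the non-vanishing monomial $-\eta\eta'\, x^0 y^{\alpha + (\beta+1) - 1}\prod v_i^{\alpha_i}\prod u_j^{\beta_j} = -\eta\eta'\, x^0 y^{\alpha+\beta}\prod v_i^{\alpha_i}\prod u_j^{\beta_j}$, which up to sign is exactly $A(G'')$; since orientation affects only signs, this finishes the argument. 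The one point that needs genuine care is the non-cancellation check in the previous paragraph, as one cannot merely multiply the two given monomials together: everything else is just matching the hypotheses of Corollary~\ref{cor:00}. Alternatively, one can mimic the proof of Corollary~\ref{cor:00} verbatim through the cutvertex $y$ of $G'' - x$; there the extra $+1$ in the exponent arises precisely because deleting $x$ from $G'$ leaves the $y$-exponent untouched (as $y \nsim x$ in $G'$), rather than lowering it by one as it does in $G$.
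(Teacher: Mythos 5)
Your proposal is correct, but it takes a different route from the one the paper intends. The paper states Corollary~\ref{cor:00_2} with no proof at all: it is implicitly meant to follow by rerunning the cutvertex argument of Corollary~\ref{cor:00} --- delete $x$, split $G''-x$ at the cutvertex $y$ into $G-x$ and $G'-x$, combine via Lemma~\ref{lem:l2}, then re-attach $x$ --- where the only change is that deleting $x$ from $G'$ does not decrement the $y$-exponent (since $xy\notin E(G')$), which is exactly why the conclusion reads $\alpha+\beta$ instead of $\alpha+\beta-1$. That is precisely the alternative you sketch in your final sentence. Your primary argument instead reduces the statement to Corollary~\ref{cor:00} as a black box, by forming $\tilde{G}'=G'+xy$; the one genuine obligation this creates is showing that $-\eta'x^0y^{\beta+1}\prod u_j^{\beta_j}$ survives in $P(\tilde{G}')=\pm P(G')(x-y)$, and your degree argument (no monomial of $P(G')$ has negative $x$-degree, so only $x^0y^\beta\prod u_j^{\beta_j}$ can contribute to that coefficient) settles it cleanly. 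What your route buys is brevity and reuse --- no repetition of the deletion/re-attachment bookkeeping; what the cutvertex route buys is self-containedness and a transparent explanation of where the extra unit of $y$-exponent comes from. One remark on the sign: you obtain coefficient $-\eta\eta'$ where the statement asserts $\eta\eta'$. This is not a defect of your argument but an artifact of the paper's conventions: the coefficients in Lemma~\ref{lem:l2} and Corollaries~\ref{cor:00} and~\ref{cor:00_2} are only determined up to orientation-dependent signs (the paper's own proof of Corollary~\ref{cor:00} silently multiplies coefficients by $(-1)^{\deg x}$ when deleting and restoring $x$), so only non-vanishing and the absolute value $|\eta\eta'|$ carry content, as you correctly note.
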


\section{Short outer cycles}
	
Following the footsteps of \cite{ThomassenExt}, we will prove a simple, but interesting, fact about the list colourings of planar graphs --- that if the length of the outer cycle of the graph is short (no more than 5), then in most situations the problems of list assignment of the outer cycle and the interior of the graph can essentially be separated. It also serves as a polynomial analogue to the theorem of Bohme, Mohar and Stiebitz~\cite{BMS}.
	
\begin{theorem}\label{thm:smallcycle}
	Let $G$ be a chordless near-triangulation with outer cycle $C(G) = \lbrace v_1,\dots,v_i \rbrace, i \in \lbrace 3,4,5 \rbrace$, with $I(G) = V(G)\setminus V(C(G)) = \lbrace u_1,\dots, u_j\rbrace$. Suppose that in case $i = 5$ there is no vertex in $I(G)$ adjacent to all vertices of $C(G)$. Then for any monomial $\eta v_1^{\alpha_1}\dots v_i^{\alpha_i}$, $i \in \lbrace 3,4,5 \rbrace, \eta \ne 0$, in $P(C(G))$ there is a monomial
	$$\eta' v_1^{\alpha_1}\dots v_i^{\alpha_i} u_1^{\beta_1}\dots u_j^{\beta_j}$$
	in $P(G)$ with $\eta' \ne 0$ and $\beta_k \le 4$ for all $1 \le k \le j$.
\end{theorem}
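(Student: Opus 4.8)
The plan is to translate the statement into the language of orientations and then argue by induction on the number $j$ of interior vertices. Recall that the coefficient of a monomial $\prod_v v^{d_v}$ in $P(G)$ is, up to sign, the difference between the numbers of even and odd Eulerian sub-orientations of any orientation of $G$ with out-degree sequence $(d_v)$; so a non-vanishing monomial $\eta' \prod v_k^{\alpha_k}\prod u_\ell^{\beta_\ell}$ is exactly an orientation of $G$ in which $v_k$ has out-degree $\alpha_k$, each $u_\ell$ has out-degree $\beta_\ell$, and the even/odd counts do not cancel. The prescribed datum $\eta\prod v_k^{\alpha_k}$ being a non-vanishing monomial of $P(C(G))$ says precisely that the outer cycle already carries such an orientation with out-degrees $\alpha_k$; the task is to orient the remaining edges so that every interior vertex receives out-degree at most $4$ while the global even$-$odd count stays nonzero. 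A first sanity check fixes the bookkeeping: for a near-triangulation Euler's formula gives $|E(G)| = 2i+3j-3$, so by homogeneity the interior out-degrees of any extension must sum to exactly $i+3j-3$; the bound $\beta_\ell\le 4$ is thus an assertion that this total can be distributed evenly, and the only way it is forced to fail is a vertex adjacent to the whole pentagon --- for $j=1$ the wheel $W_5$ itself --- which is exactly the configuration excluded when $i=5$.

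The base case $j=0$ is immediate, taking $\eta'=\eta$. For the inductive step I would use that $G$ is a chordless near-triangulation with a short outer cycle to find a reducing separation. Because $C(G)$ is chordless, no separating vertex pair or triangle can lie entirely on $C(G)$, so any such separator involves an interior vertex; cutting $G$ along a cut-vertex (Lemma~\ref{lem:l2}) or a separating edge (Corollaries~\ref{cor:00} and~\ref{cor:00_2}) produces strictly smaller chordless near-triangulations whose outer cycles are again of length at most $5$, each being an arc of $C(G)$ closed up by the separator. I would then apply the induction hypothesis to each piece and recombine the resulting monomials with those corollaries, choosing in each piece the orientation of the shared separator compatibly, so that the prescribed outer exponents $\alpha_k$ are respected and every shared vertex accumulates out-degree at most $4$. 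When no such separation exists, so that $G$ is internally $4$-connected, I would instead extend the fixed outer orientation inward directly: order the interior vertices by a Zhu-type degeneracy ordering and orient each toward its already-oriented neighbours, which the $4$-connectivity and the absence of a dominating vertex keep at out-degree $\le 4$, invoking the mechanism behind Theorem~\ref{thm:zhu} to certify that the even$-$odd count survives.

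The hard part is the per-vertex bound, not the gluing, which is purely the bookkeeping already encoded in the corollaries. Two points demand genuine care. First, a vertex shared between two pieces inherits out-degree from both sides, and I must show this sum never exceeds $4$; here the shortness of $C(G)$ and chordlessness limit the number of triangles around such a vertex, and it is exactly at $i=5$ that the non-domination hypothesis becomes indispensable, since a vertex adjacent to all of $C(G)$ is the unique way to be forced up to out-degree $5$. Second, the outer monomial need not have a vertex of exponent $0$ to decompose around --- for $i=4$ the cyclic monomial $v_1v_2v_3v_4$ occurs with coefficient $\pm 2$, arising from a genuinely cyclic orientation with no acyclic representative --- so the even$-$odd count cannot always be reduced to a single acyclic orientation and must be tracked honestly through the induction. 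I expect the combination of this non-acyclic coefficient bookkeeping with the verification that no admissible configuration other than the excluded one forces an interior out-degree of $5$ to be the main obstacle, and the case analysis $i\in\{3,4,5\}$ to be the natural way to organise it.
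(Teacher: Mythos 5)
Your proposal has a genuine gap, and it sits exactly where you yourself locate ``the main obstacle.'' First, the separation-based induction is vacuous: a chordless near-triangulation whose outer boundary is a cycle is 2-connected and has no 2-cuts at all. Indeed, any 2-cut of a near-triangulation must consist of two outer-cycle vertices (a separated set of interior vertices would be surrounded by a cycle of neighbours passing only through the two cut vertices, which is impossible in a simple plane graph, and two non-adjacent outer vertices cannot separate because the fan of triangles around either one connects the two boundary arcs), and an adjacent such pair is a chord, which is excluded. So Lemma~\ref{lem:l2} and Corollaries~\ref{cor:00} and~\ref{cor:00_2} never apply, and your whole induction collapses onto the final ``no separation'' case. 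There, what you offer is not a proof: exhibiting one orientation in which every interior vertex has out-degree at most $4$ says nothing about non-vanishing of the coefficient, which is a \emph{signed} count over all orientations with that out-degree sequence; and ``invoking the mechanism behind Theorem~\ref{thm:zhu}'' is not available as a black box, since Zhu's theorem is proved for one specific boundary condition (a deleted boundary edge, exponent $0$ on its two ends, at most $2$ elsewhere on the boundary), not for an arbitrary prescribed outer monomial. You correctly observe that the cyclic monomial $v_1v_2v_3v_4$ of the $4$-cycle has coefficient $\pm 2$ and admits no acyclic representative, which is precisely why a per-monomial extension argument is hard --- but you never resolve this, you only flag it.

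The paper sidesteps that difficulty with one structural move your proposal is missing: it factors $P(G) = P(C(G)) \cdot P(G - E(C(G)))$ and proves a single statement (Theorem~\ref{thm:smallcycle_0}) that $P(G-E(C(G)))$ contains a non-vanishing monomial with exponent $0$ on \emph{every} cycle vertex and at most $4$ on every interior vertex. Because $P(C(G))$ is homogeneous of degree $i$ in the cycle variables, a monomial of the product whose cycle variables have total degree $i$ can arise only as (cycle monomial) $\times$ (cycle-free monomial), so this one statement yields the conclusion for every outer monomial simultaneously, with no cross-cancellation --- including the cyclic monomials that obstruct your approach. Theorem~\ref{thm:smallcycle_0} is then proved constructively: delete the cycle vertices other than $v_1,v_2$ together with the edge $v_1v_2$, apply Theorem~\ref{thm:zhu}, and re-attach the deleted vertices one at a time (each re-attachment keeps the new vertex at exponent $0$ and raises its neighbours' exponents by $1$, without cancellation); the $i=5$ case with an interior vertex adjacent to four cycle vertices is handled separately by splitting along a path through that vertex and invoking the $i=3$ and $i=4$ cases. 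If you want to rescue your plan, this factorization is the missing idea; without it, the prescribed-exponent extension problem you set up is strictly harder than the theorem itself.
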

In order to prove this theorem, it actually suffices to prove the following.
\begin{theorem}\label{thm:smallcycle_0}
	Let $G$ be a chordless near-triangulation with outer cycle $C(G) = \lbrace v_1,\dots,v_i \rbrace, i \in \lbrace 3,4,5 \rbrace$, with $I(G) = V(G)\setminus V(C(G)) = \lbrace u_1,\dots, u_j\rbrace$. Suppose that in case $i = 5$ there is no vertex in $I(G)$ adjacent to all vertices of $C(G)$. Then there is a monomial
	$$\eta v_1^0\dots v_i^0 u_1^{\beta_1}\dots u_j^{\beta_j}$$
	in $P(G-E(C(G)))$ with $\eta \ne 0$ and $\beta_k \le 4$ for all $1 \le k \le j$.
\end{theorem}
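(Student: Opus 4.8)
The plan is to pass to the Alon--Tarsi / orientation reformulation of non-vanishing monomials and then reduce the whole statement to an Alon--Tarsi bound on the interior graph $int(G)$. Recall that, by the interpretation underlying the Combinatorial Nullstellensatz, a monomial $\prod_w w^{d_w}$ is non-vanishing in the graph polynomial of a graph $F$ if and only if $F$ admits an orientation in which each vertex $w$ has out-degree $d_w$ and the numbers of even and odd Eulerian sub-digraphs differ; moreover this difference (up to sign) depends only on the out-degree sequence, not on the particular orientation realising it. So I want to produce an orientation of $H:=G-E(C(G))$ in which each boundary vertex $v_\ell$ is a sink (out-degree $0$), each interior vertex $u_k$ has out-degree at most $4$, and the even/odd Eulerian count is non-zero.

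First I would record the structural consequence of making every $v_\ell$ a sink. In $H$ the boundary vertices are pairwise non-adjacent, so every edge at $v_\ell$ is a ``spoke'' joining $v_\ell$ to an interior vertex, and forcing $v_\ell$ to be a sink orients each such spoke as $u_k\to v_\ell$. A directed cycle cannot pass through a sink, hence no spoke lies on a directed cycle and therefore no spoke belongs to any Eulerian sub-digraph. Consequently the Eulerian sub-digraphs of such an orientation of $H$ are \emph{exactly} those of its restriction to $int(G)$, so the two even/odd counts coincide. Writing $s_k$ for the number of neighbours of $u_k$ on $C(G)$, the out-degree of $u_k$ inside $int(G)$ equals its out-degree in $H$ minus $s_k$ (all $s_k$ spokes point out of $u_k$). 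This reduces the theorem to the following equivalent statement: \emph{the graph polynomial of $int(G)$ contains a non-vanishing monomial $\prod_k u_k^{\gamma_k}$ with $\gamma_k\le 4-s_k$ for every $k$}. Both directions are immediate: orient all spokes inward, which imposes no constraint on $int(G)$, and split the out-degree of each $u_k$ into its interior part and its $s_k$ forced spokes.

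Next I would settle this interior problem by a Zhu-style argument. Since $G$ is a near-triangulation, every bounded face of $int(G)$ is a triangle of $G$ with all three vertices interior, so $int(G)$ is again a (possibly degenerate) near-triangulation whose outer boundary is the ``second layer'' $C_2$ of $G$. Deep interior vertices have $s_k=0$ and are allowed the full exponent $4$, while a vertex of $C_2$ carries a discount $4-s_k$ that matches the boundary regime of Theorem~\ref{thm:zhu} whenever $s_k\le 2$ (a vertex with one or two boundary neighbours is allowed exponent $3$ or $2$, which is exactly what Zhu's bound delivers). When $int(G)$ is empty, a single vertex, or disconnected, the claim is trivial or follows by treating the pieces separately and recombining their monomials through Lemma~\ref{lem:l2} and Corollaries~\ref{cor:00}--\ref{cor:00_2}; in the generic case it is obtained by an induction in the spirit of Zhu's proof of Theorem~\ref{thm:zhu} carried out on $int(G)$, with the weights $4-s_k$ playing the role of the prescribed boundary list-lengths.

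The main obstacle is precisely the second-layer vertices with $s_k\ge 3$, where the discount forces $\gamma_k\le 1$ or even $\gamma_k=0$, beyond what the off-the-shelf boundary bound $\le 2$ of Theorem~\ref{thm:zhu} guarantees. This is exactly where the hypotheses enter: the chordlessness of $C(G)$ and, in the case $i=5$, the absence of an interior vertex adjacent to all of $C(G)$ bound $s_k\le 4$ (keeping every discount non-negative) and, more importantly, force such a heavily-attached vertex to be correspondingly poorly connected inside $int(G)$, so that its small interior degree can absorb the tight exponent. Verifying this trade-off ---equivalently, checking that the only obstruction to it is the excluded generalized-wheel configuration of Theorem~\ref{thm:thomassenExt}--- is the crux; the short base cases ($i=3$, for instance $G=K_4$, where the reduction leaves the empty monomial) serve as the model and the sanity check.
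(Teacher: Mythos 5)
Your opening reduction is correct, and it is worth noting that it does not even need the Alon--Tarsi machinery: since the boundary vertices are pairwise non-adjacent in $H = G - E(C(G))$, every spoke factor $(u_k - v_\ell)$ must contribute its $u_k$ term to any monomial with all $v_\ell^0$, so the coefficient of $\prod_\ell v_\ell^0 \prod_k u_k^{\beta_k}$ in $P(H)$ is, up to sign, the coefficient of $\prod_k u_k^{\beta_k - s_k}$ in $P(int(G))$. But this means your reduction is an exact restatement of the theorem, not progress toward it: the entire content now sits in the claim that $P(int(G))$ has a non-vanishing monomial with $\gamma_k \le 4 - s_k$, and this you do not prove. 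You explicitly concede that the vertices with $s_k \ge 3$ are ``the crux,'' offer only the unverified heuristic that such vertices must be ``poorly connected inside $int(G)$,'' and point toward the generalized-wheel obstruction of Theorem~\ref{thm:thomassenExt}, which plays no role in this statement --- the operative hypotheses here are chordlessness and (for $i=5$) the absence of an interior vertex adjacent to all of $C(G)$. Your structural claims about $int(G)$ are also shaky: $int(G)$ need not be a near-triangulation in any useful sense (it can be disconnected, a tree, or empty), so ``an induction in the spirit of Zhu's proof with weights $4-s_k$ as boundary list-lengths'' is a program, not an argument.

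The paper closes exactly this gap by never working inside $int(G)$ at all. It deletes one, two, or three cycle vertices (together with one boundary edge), applies Theorem~\ref{thm:zhu} to the resulting near-triangulation --- so that every neighbour of a deleted vertex lies on the new outer cycle and hence has exponent at most $2$ --- and then re-attaches the deleted vertices one at a time as stars, each re-attachment raising neighbour exponents by exactly $1$ without killing the coefficient (the star polynomial has a unique monomial in which the re-added centre has exponent $0$). Chordlessness guarantees the intermediate graphs are near-triangulations; for $i = 5$ the hypothesis is used either to find three consecutive cycle vertices with no common interior neighbour, or, when a (necessarily unique) interior vertex has four cycle neighbours, to split $G$ along a path through that vertex and combine the $i=3$ and $i=4$ cases via Lemma~\ref{lem:l2}. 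This bookkeeping over the order of re-attachment is precisely the trade-off your proposal identifies but leaves unverified, so as it stands the proposal has a genuine gap.
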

\begin{proof}
	Suppose $|C(G)| = 3$. Consider the graph
	$$G_1=(G - v_3) - v_1v_2.$$
	By Theorem~\ref{thm:zhu}, in $P(G_1)$ there is a non-vanishing monomial
	$$A(G_1)=\eta_1 v_1^0v_2^0 u_1^{\beta_1}\dots u_j^{\beta_j}.$$
	Moreover, as due to planarity every vertex $u_k$ neighboring $v_3$ in $G$ is on an outer cycle of $G_1$, we have $\beta_k\le 2$ for each of those vertices, and $\beta_k\le 4$ for any other vertex $u_k\in I(G)$ in $A(G_1)$. Construct $G_2$ from $G_1$ by adding $v_3$ and linking it to every vertex from $I(G)$ that neighbors $v_3$ in $G$. As for every neighbor of $v_3$ we had an exponent no larger than $2$ in $A(G_1)$, there is a nonvanishing monomial $A(G_2) \in P(G_2)$ such that
	$$A(G_2)=\eta v_1^0v_2^0v_3^0 u_1^{\beta_1}\dots u_j^{\beta_j}$$
	$\beta_k\le 3$ for every $u_k$ neighboring $v_3$, and $\beta_k\le 4$ for any other $u_k\in I(G)$. It only remains to notice that
	$$G_2=G-E(C(G)).$$
	
	Suppose now that $|C(G)| = 4$. Let
	$$G_1=G - \lbrace v_3, v_4 \rbrace - v_1v_2.$$
	In $P(G_1)$ there is non-vanishing monomial $A(G_1)$ such that
	$$A(G_1)=\eta_1 v_1^0v_2^0 u_1^{\beta_1}\dots u_j^{\beta_j},$$
	and for every $u_k$ neighboring either $v_3$ or $v_4$ in $G$ we have $\beta_k\le 2$ and for any other $u_k$ we have $\beta_k\le 4$. Add back $v_3$ to its neighbors from $G$. Call this newly created graph $G_2$. In $P(G_2) $ there is non-vanishing $A(G_2)$ such that
	$$A(G_2)=\eta v_1^0v_2^0v_3^0 u_1^{\beta_1}\dots u_j^{\beta_j},$$
	for every $u_k$ neighboring $v_3$ we have $\beta_k\le 3$, for every $u_k$ neighboring $v_4$ and not $v_3$ we have $\beta_k\le 2$, and for any other $u_k$ we have $\beta_k\le 4$. After constructing $G_3$ by adjoining $v_4$ in the same manner, we have in $P(G_3)$ non-vanishing $A(G_3)$ such that
	$$A(G_3)=\eta v_1^0v_2^0v_3^0v_4^0 u_1^{\beta_1}\dots u_j^{\beta_j}$$
	$\beta_k\le 3$, for any $u_k$ that neighbors exactly one of $v_3, v_4$, $\beta_k\le 4$ for any other $u_k$, and
	$$G_3=G-E(C(G)).$$
	
	The remaining case is when $|C(G)| = 5$. Suppose at first that there is no vertex in $I(G)$ that neighbors more than three of the vertices from $C(G)$. Observe that due to planarity, it is possible to find a triple of consecutive vertices that do not share a common neighbor. Relabel vertices of $C(G)$ in counter-clockwise order in a manner such that $v_3,v_4,v_5$ form this kind of triple. Starting with
	$$G_1=G- \lbrace v_3, v_4, v_5 \rbrace - v_1v_2$$
	and proceeding as in previous point we end up with $G_3$ such that there is non-vanishing $A(G_3)$ in $P(G_3)$ with
	$$A(G_2)=\eta v_1^0v_2^0v_3^0 u_1^{\beta_1}\dots u_j^{\beta_j},$$
	$\beta_k\le 2$ for every $u_k$ that in $G$ neighbors $v_5$ and neither of $v_3,v_4$, $\beta_k\le 3$ for every $u_k$ neighboring exactly one of $v_3, v_4$, and $\beta_k\le 4$ for any other $u_k$. Add back $v_5$, obtaining
	$$G_4=G-E(C(G)).$$
	Notice that due to the fact that no neighbor of $v_5$ was adjacent to both $v_3$ and $v_4$, for every neighbor of $v_5$ we have $\beta_k\le 3$ in $A(G_3)$. Therefore in $P(G_4)$ we have non-vanishing monomial:
	$$A(G_4)=\eta v_1^0v_2^0v_3^0v_4^0v_5^0 u_1^{\beta_1}\dots u_j^{\beta_j}, $$
	with $\beta_k \le 4$.
	
	If there is a vertex in $I(G)$ that neighbors more than three of the vertices from $C(G)$, then by the assumption it neighbors excatly four of them. Notice moreover, that if there were two such vertices, then it contradicts planarity, as one of them would restrict the other to the face with at most three vertices on the outer cycle. Therefore it is a unique vertex $u_l$. Again relabel vertices of $C(G)$ in counter-clockwise order such that $v_5$ is the sole non-neighbor of $u_l$ on the outer cycle. Split $G$ along the path $\overrightarrow{v_4u_lv_1}$ into $G_1$ and $G_2$, where $v_5 \in V(G_2)$. Consider $G_1-\overrightarrow{v_1v_2v_3v_4}$. It consists of a star $S_4$ centered at $u_l$ and three subgraphs enclosed by triangles $v_1v_2u_l$, $v_2v_3u_l$ and $v_3v_4u_l$ in $G$. As such, as we already established the theorem for $i=3$, in $P(G_1-\overrightarrow{v_1v_2v_3v_4})$ there is a monomial
	$$A_1=\eta_1v_1^0v_2^0v_3^0v_4^0u_l^4 \prod u_k^{\beta_k}, \beta_k \le 4.$$
	Now, $G_2$ is a chordless near-triangulation with the outer cycle of length 4, therefore $P(G_2-E(C(G_2)))$ contains a monomial
	$$A_2=\eta_2v_1^0v_4^0v_5^0u_l^0 \prod u_k^{\beta_k}, \beta_k \le 4.$$
	A brief examination yields 
	$$G-E(C(G))=(G_1-\overrightarrow{v_1v_2v_3v_4}) \cup (G_2-E(C(G_2))),$$
	and therefore:
	$$P(G-E(C(G)))=P((G_1-\overrightarrow{v_1v_2v_3v_4}))P((G_2-E(C(G_2)))).$$
	Hence
	$$A_1A_2=\eta_1\eta_2v_1^0v_2^0v_3^0v_4^0v_5^0u_l^4 \prod u_k^{\beta_k},\beta_k \le 4,$$
	is contained in $P(G-E(C(G)))$, and as $v_1,v_4$ and $u_l$ are the only common vertices of $G_1$ and $G_2$, and exponents of each of these vertices is equal to 0 in $A_2$, $A_1A_2$ cannot vanish in $P(G-E(C(G)))$.
\end{proof}
It can be immediately seen that this applies not only to the planar graphs with short outer cycle, but to all planar graphs whose outer cycle induced subgraph has "small faces". It can be put rigorously in the form of the following corollary.
\begin{corollary}
	Let $G$ be a plane graph with outer cycle $C$ such that every face of the subgraph $C(G)$ induced by $C$ other than the outer face is bounded by no more than $5$ edges. Suppose also that no vertex not on $C$ neighbors more than $4$ vertices on $C$. Then for every monomial $A$ in $P(C(G))$ there is a non-vanishing monomial $\eta A \Pi u_i^{\beta_i}$, with $u_i \in V(G) \setminus V(C(G))$ and $\beta_i \le 4$, in $P(G)$.
\end{corollary}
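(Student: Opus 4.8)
The plan is to reduce the statement to its boundary-exponents-zero version (the global analogue of Theorem~\ref{thm:smallcycle_0}) and then to recover an arbitrary $A$ by multiplying through. I would first observe that the induced subgraph $C(G)$, drawn as inherited from $G$, cuts the disc bounded by $C$ into its inner faces; by hypothesis each such face $F$ is bounded by a cycle $C_F$ with $|C_F|\le 5$, and $C_F\subseteq C$. Let $G_F$ be the subgraph of $G$ carried by the closed face $\overline F$, that is, $C_F$ together with all vertices and edges of $G$ lying strictly inside $F$. Since $C_F$ is a face boundary of $C(G)$, it has no chord among $C(G)$-vertices, so $G_F$ is a chordless near-triangulation with outer cycle $C_F$ of length at most $5$ (here I assume $G$ is a near-triangulation; the general case is reduced to this at the end). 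Crucially, every vertex lying strictly inside $F$ has all of its $G$-neighbours inside $\overline F$, because an edge leaving $F$ would have to cross the $C(G)$-cycle $C_F$; hence the interior vertices of $G$, together with their incident edges, are partitioned among the faces.

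Next I would treat each face separately. The hypothesis that no vertex off $C$ is adjacent to more than $4$ vertices of $C$ guarantees, in the case $|C_F|=5$, that no interior vertex of $G_F$ is adjacent to all five vertices of $C_F$, which is exactly the excluded configuration of Theorem~\ref{thm:smallcycle_0}. Applying that theorem to each $G_F$ yields a non-vanishing monomial $M_F=\eta_F\prod_{u\in \mathrm{int}(F)}u^{\beta_u}$ in $P\bigl(G_F-E(C_F)\bigr)$ in which every vertex of $C_F$ carries exponent $0$ and every interior exponent satisfies $\beta_u\le 4$. Writing $H_F:=G_F-E(C_F)$, the interior edges of $G$ are partitioned among the $H_F$, so $P\bigl(G-E(C(G))\bigr)=\prod_F P(H_F)$ as polynomials. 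I then claim that $M:=\prod_F M_F$ is a non-vanishing monomial of this polynomial: in any expansion $\prod_F\nu_F$ equal to $M$, each interior variable $u\in\mathrm{int}(F)$ occurs only in $P(H_F)$, forcing $\nu_F$ to carry the interior exponents of $M_F$; and since every $C(G)$-variable occurs in $M$ with exponent $0$ while the $\nu_F$ contribute non-negative exponents summing to it, each $\nu_F$ must carry exponent $0$ on all boundary vertices. Thus $\nu_F=M_F$ for every $F$, and the coefficient of $M$ equals $\prod_F\eta_F\ne 0$, establishing the boundary-zero statement.

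Finally I would lift this to an arbitrary monomial $A$ of $P(C(G))$. Since $E(G)$ is the disjoint union of $E(C(G))$ and $E\bigl(G-E(C(G))\bigr)$, we have $P(G)=P(C(G))\cdot P\bigl(G-E(C(G))\bigr)$, and I would compute the coefficient of $A\cdot M$ in this product. The key point is that $P(C(G))$, being a product of $|E(C(G))|$ linear forms in the boundary variables only, is homogeneous of degree $|E(C(G))|$ in those variables; since $A$ has this same boundary-degree and $M$ has boundary-degree $0$, any factorization $A\cdot M=\mu_1\mu_2$ with $\mu_1$ from $P(C(G))$ and $\mu_2$ from $P(G-E(C(G)))$ forces $\mu_2$ to have boundary-degree $0$, hence all boundary exponents $0$, hence $\mu_2=M$ and $\mu_1=A$. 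The factorization is therefore unique, the coefficient of $A\cdot M$ equals $(\text{coefficient of }A)\cdot\prod_F\eta_F\ne 0$, and this is the required non-vanishing monomial $\eta A\prod_i u_i^{\beta_i}$ with $\beta_i\le 4$ in $P(G)$.

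The decomposition bookkeeping is routine; the main obstacle to get right is the two non-vanishing arguments, where one must rule out cancellation from competing factorizations. Both are resolved by support and homogeneity considerations — interior variables being local to a single face pin down each per-face factor, and the maximal boundary-degree of $P(C(G))$ pins down the complementary factor in the final product. A secondary point needing a separate remark is the reduction to near-triangulations: in the paper's setting $G$ is already a near-triangulation, so each $G_F$ is one automatically, while for a general plane graph one first triangulates the interior of each face and then argues that this reduction preserves the prescribed exponent bounds.
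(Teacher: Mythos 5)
Your core argument is sound, and it is essentially the argument the paper leaves unwritten (the corollary is stated there as an immediate consequence of Theorem~\ref{thm:smallcycle_0}): decompose the interior of $C$ into the faces of $C(G)$, apply Theorem~\ref{thm:smallcycle_0} to each face-subgraph $G_F$, multiply the resulting monomials, and settle both non-vanishing claims by the two uniqueness-of-factorization observations --- interior variables live in a single face, and $P(C(G))$ is homogeneous of degree $|E(C(G))|$ in the boundary variables, so the complementary factor of $A\cdot M$ is pinned down. That part I would accept as written.

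The gap is in your final remark, which is precisely the case the corollary actually asserts: $G$ is an arbitrary plane graph, not a near-triangulation, and your proposed reduction (``triangulate the interior of each face and transfer the exponent bounds'') cannot work. Take a face $F$ of $C(G)$ bounded by a $5$-cycle $C_F=v_1\dots v_5$ containing exactly one interior vertex $u$, adjacent in $G$ to $v_1$ and $v_2$ only; this $G$ satisfies every hypothesis of the corollary. Any near-triangulation filling a pentagon with one interior vertex has $3\cdot 6-3-5=10$ edges; besides the five cycle edges, every further edge is either a chord of $C_F$ or incident to $u$, so \emph{every} triangulation of $F$ either contains a chord of $C_F$ or makes $u$ adjacent to all five vertices of $C_F$. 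Either way the hypotheses of Theorem~\ref{thm:smallcycle_0} are violated, and in the second case its conclusion is genuinely false: $P(W_5-E(C_5))=\prod_{i=1}^{5}(u-v_i)$, whose only monomial free of boundary variables is $u^5$. So for such faces there is no triangulation to which Theorem~\ref{thm:smallcycle_0} applies, and your transfer argument has nothing to start from. The offending case is easy to repair --- there $G_F-E(C_F)$ is a star with at most four edges, so $\eta\, u^{\deg u}\prod v_i^0$ survives trivially --- but a complete proof must (a) treat such faces separately, and (b) actually verify that every remaining face admits a chordless triangulation creating no vertex adjacent to all five boundary vertices, neither of which appears in your write-up. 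Relatedly, your assertion that ``in the paper's setting $G$ is already a near-triangulation'' misreads the statement: the corollary hypothesizes only a plane graph.
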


\section{Outerplanar near-triangulations with universal vertex}\label{sec:sec3}

In \cite{GT2021}, the authors used the graph polynomials to provide a characterization of the outerplanar graphs with respect to their \emph{($i,j$)-$xy$-extendability}, providing a polynomial analogue to the work of Hutchinson~\cite{Hutchinson} (which was later extended to planar graphs by Postle and Thomas~\cite{PoThI,PoThII,PoThIII}). This type of extendability is defined as follows: let $x,y$ be any two vertices of $G$, equipped with the lists of length $i$ and $j$, respectively. Then the list assignment can be extended in a specific manner (in case of outerplanar graphs all other vertices are given lists of length 3) such that $G$ is list-colorable with this assignment. The characterization is summed up in the form of the following theorem.

\begin{theorem}\cite[Theorem 1.4]{GT2021} \label{thm:outer}
	Let $G$ be any outerplanar graph with $V(G) = 
	\{x, y, v_1, \dots, v_n\}$. Then in $P(G)$ there is a non-vanishing monomial of the form $\eta x^\beta y^\gamma \prod_{i=1}^n v_i^{\alpha_i}$ with $\alpha_i \le 2$, $\beta, \gamma \le 1$ satisfying:
	\begin{enumerate}
		\item $\beta = \gamma = 1$ when every proper $3$-colouring $\mathcal{C}$ of $G$ forces $\mathcal{C}(x) = \mathcal{C}(y)$;
		\item $\beta + \gamma = 1$ when every proper $3$-colouring $\mathcal{C}$ of $G$ forces  $\mathcal{C}(x) \neq \mathcal{C}(y)$; 
		\item $\beta = \gamma = 0$ otherwise.
	\end{enumerate}
\end{theorem}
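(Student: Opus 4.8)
The plan is to split the statement into an \emph{optimality} part (no admissible monomial with a strictly smaller $\beta+\gamma$ can exist, which forces the exact value of $(\beta,\gamma)$ in each case) and an \emph{existence} part (a non-vanishing monomial attaining the stated value does exist). The optimality part is short and uses Combinatorial Nullstellensatz~\cite{combnull} in the contrapositive, while the existence part carries the real weight and is proved by induction on $|V(G)|$.

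For optimality I would argue by contradiction. In case (i) suppose there were a non-vanishing monomial with $\beta=0$ and all $\alpha_i\le 2$; then assigning $L(x)=\{1\}$, $L(y)=\{2,3\}$ and $L(v_i)=\{1,2,3\}$ yields, by Combinatorial Nullstellensatz, a proper colouring, which is a proper $3$-colouring with $\mathcal C(x)\neq\mathcal C(y)$, contradicting that every proper $3$-colouring forces $\mathcal C(x)=\mathcal C(y)$; the symmetric list choice excludes $\gamma=0$, so $\beta=\gamma=1$ is forced. In case (ii) a monomial with $\beta=\gamma=0$ and $\alpha_i\le 2$ would, via $L(x)=L(y)=\{1\}$ and $L(v_i)=\{1,2,3\}$, produce a proper $3$-colouring with $\mathcal C(x)=\mathcal C(y)$, contradicting the forcing, so $\beta+\gamma\ge 1$. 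In case (iii) nothing has to be excluded. Hence in every case it only remains to \emph{produce} a non-vanishing monomial with $\alpha_i\le 2$ realising the stated $(\beta,\gamma)$.

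For existence I would induct on $|V(G)|$, and the key observation is that deleting a vertex $w\notin\{x,y\}$ of degree at most $2$ changes neither the outerplanarity nor the relevant colouring structure: every proper $3$-colouring of $G-w$ extends to $w$ (at most two of the three colours are forbidden), so the achievable pairs $(\mathcal C(x),\mathcal C(y))$ coincide for $G$ and $G-w$, and the two graphs fall into the same case. Writing $P(G)=\pm P(G-w)\prod_{w'\sim w}(w-w')$ and selecting the pure $w$-term from each of the at most two factors, any non-vanishing monomial $M$ of $P(G-w)$ lifts to the monomial $M\cdot w^{\deg w}$ of $P(G)$: the top power of $w$ arises in only one way, so no cancellation occurs, the neighbours' exponents are untouched, and $w$ receives exponent $\deg w\le 2$. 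Since $M$ already realises the $(\beta,\gamma)$ of the common case by the induction hypothesis, so does $M\cdot w^{\deg w}$. Disconnected graphs, cut vertices, and a shared edge $xy$ are handled by peeling leaf blocks and recombining the pieces with Lemma~\ref{lem:l2} and Corollaries~\ref{cor:00}, \ref{cor:00_2} (or, for vertex-disjoint components, by simply multiplying the polynomials), after noting that a component or pendant block avoiding $\{x,y\}$ contributes only exponents $\le 2$, since its Alon--Tarsi number is at most $3$, and does not affect the forcing.

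This reduces everything to configurations whose only vertices of degree at most $2$ are $x$ and $y$, that is, to a single $2$-connected block that is a triangulated strip with $x$ and $y$ at its two ends, and this base case is where I expect the genuine difficulty. Such a block is uniquely $3$-colourable up to a permutation of colours, the colour of the $i$-th vertex along the strip being determined modulo $3$, so $x$ and $y$ are forced equal or forced different according to the length of the strip, exactly the dichotomy between cases (i) and (ii), with case (iii) unattainable here. For this family I would build the monomial explicitly, either from the leading behaviour of $P$ along the strip or by a short secondary induction that glues on one end triangle at a time via Corollary~\ref{cor:00_2}, maintaining the invariant that $x$ and $y$ carry the prescribed exponents. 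The main obstacle throughout is the non-cancellation bookkeeping: at each lifting and gluing step one must certify that the chosen monomial cannot be cancelled by another term, and that the strip base case yields \emph{precisely} the exponent pattern matching its forced colour relation rather than an inadvertently larger one.
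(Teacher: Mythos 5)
First, a framing remark: the paper does not prove Theorem~\ref{thm:outer} at all --- it is quoted verbatim from \cite[Theorem 1.4]{GT2021} --- so there is no internal proof to compare against; your proposal has to be judged on its own merits, and it contains a genuine gap, located exactly where the real content of the theorem lies. Your degree-2 reduction itself is sound: deleting $w\notin\{x,y\}$ with $\deg w\le 2$ preserves the trichotomy (the sets of achievable pairs $(\mathcal{C}(x),\mathcal{C}(y))$ coincide for $G$ and $G-w$), and lifting a monomial $M$ of $P(G-w)$ to $M\cdot w^{\deg w}$ is cancellation-free. The problem is your description of the terminal configurations. An outerplanar graph whose only vertices of degree at most $2$ are $x$ and $y$ need \emph{not} be a $2$-connected triangulated strip. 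It can have cutvertices: take two copies of the $5$-vertex triangulated strip (each forces its two ears to receive \emph{distinct} colours) and glue them at one shared ear $c$, with $x$ and $y$ the free ears. It can also be $2$-connected but not triangulated: take the outer cycle $a_1xa_2a_3ya_4$ with chords $a_1a_2$ and $a_3a_4$; all vertices other than $x,y$ have degree $3$, and one inner face is a quadrilateral. Both graphs are reduced in your sense, neither is uniquely $3$-colourable, and both lie in case (iii): in the first, $\mathcal{C}(x)\ne\mathcal{C}(c)$ and $\mathcal{C}(c)\ne\mathcal{C}(y)$ are forced but $\mathcal{C}(x)$ versus $\mathcal{C}(y)$ is free; in the second, a direct check of the colourings $a_1=1,a_2=2,x=3$, $(a_3,a_4)\in\{(1,2),(1,3),(3,2)\}$ shows both $\mathcal{C}(x)=\mathcal{C}(y)$ and $\mathcal{C}(x)\ne\mathcal{C}(y)$ occur. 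Hence your claim that the reduction terminates in uniquely $3$-colourable strips where ``case (iii) is unattainable'' is false, and the hardest assertion of the theorem --- existence of a monomial with $\beta=\gamma=0$ in case (iii) --- is never actually proved by your argument.

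The second gap is that the block/cutvertex ``recombination'' cannot be carried out with the cited tools as stated. Lemma~\ref{lem:l2} \emph{adds} exponents at the shared vertex. So a pendant block $B$ attached at a cutvertex $c$ and avoiding $\{x,y\}$ must contribute a monomial with exponent exactly $0$ at $c$ and at most $2$ elsewhere; the fact that outerplanar graphs have Alon--Tarsi number at most $3$ only yields exponents $\le 2$ everywhere, \emph{including} at $c$, and after gluing the exponent at $c$ can reach $4$. Likewise, when $x$ and $y$ lie in different blocks of a chain, gluing case-(ii) pieces end to end requires knowing \emph{which} of the two distinguished vertices of each piece can be made to carry the exponent $1$ (and controlling signs so the glued monomial survives); the statement of Theorem~\ref{thm:outer}, used as an induction hypothesis, only gives $\beta+\gamma=1$ and provides neither. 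This is precisely the kind of refined information (which endpoint carries the exponent, coefficients of absolute value $1$, adjacency of $x,y$ distinguished) that the actual proof in \cite{GT2021} extracts from its Theorem 2.1 on $2$-connected near-triangulations, and that the present paper's Lemmas~\ref{lem:012}--\ref{lem:odd_wheels} mirror for broken wheels. Without such a strengthened induction hypothesis --- both for the cutvertex gluing and for the non-triangulated, non-uniquely-colourable terminal graphs above --- your outline does not close.
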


\begin{figure}[htb]
	\begin{center}
		\includegraphics[width = 0.9\textwidth]{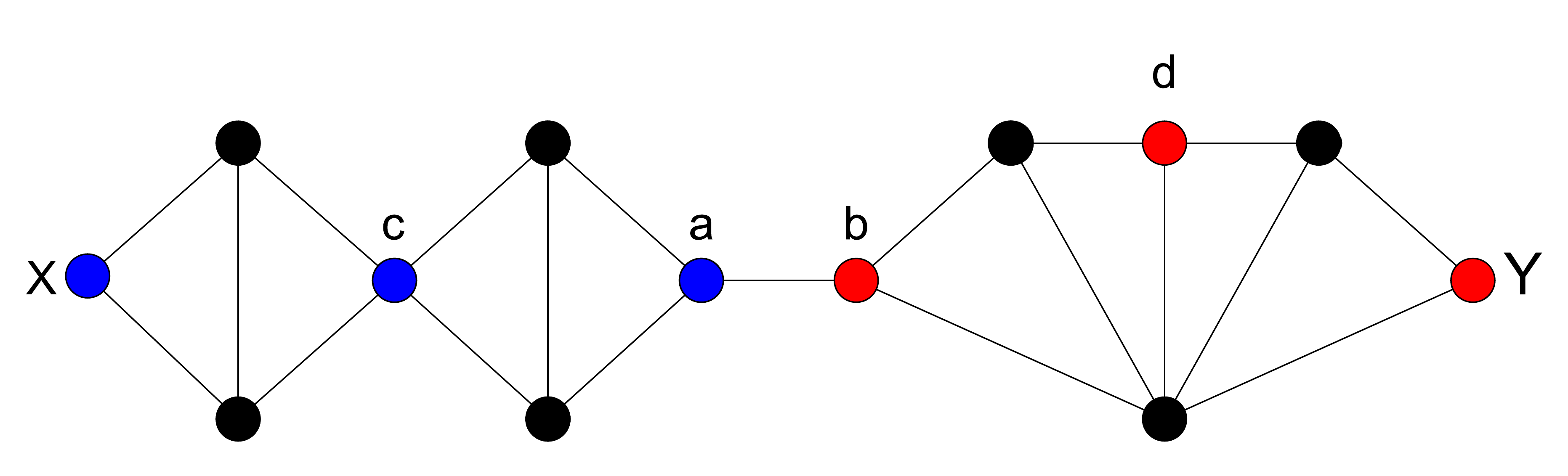}
	\end{center}
	\caption{An example of a graph satisfying conditions of point $ii)$ of Theorem~\ref{thm:outer}. When 3-colouring the graph, vertices $a$ and $b$ need to be in different colours. Vertices $x$ and $c$ are in the same colour class as $a$ (an example of the chain of diamonds), while $y$ and $d$ are in the same colour class as $b$ (the diamonds are linked along an edge). Therefore $x$ and $y$ have different colours in every proper 3-colouring of the graph. The black vertices are yet to be coloured.} \label{fig:colouring}
\end{figure}

A particular type of outerplanar graphs turns out to be deeply linked to the problem of the 3-path extendability, especially to the concept of generalized wheels. These are the so called \emph{outerplanar near-triangulations with universal vertex}. It is an (indexed) family of 2-connected outerplanar near-triangulations having a \emph{universal vertex} (i.e. a vertex neighboring all other vertices).
Its fairly straighforward to deduce from the properties of the 2-connected outerplanar triangulations that apart from the universal vertex such graphs have precisely two vertices of degree 2, and the degrees of all remaining vertices are equal to 3.

\begin{figure}[htb]
	\begin{center}
		\includegraphics[scale=0.55]{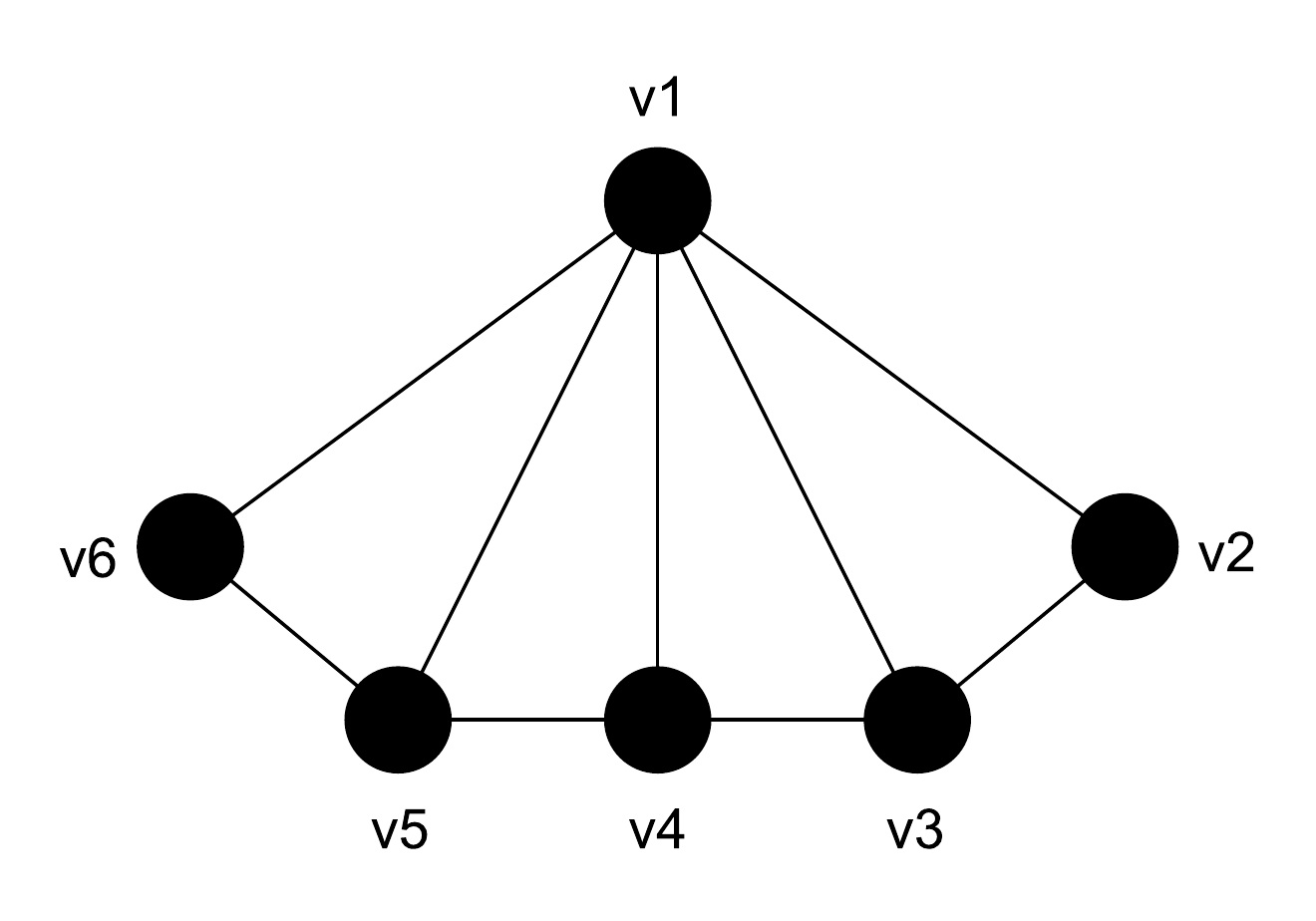}	
	\end{center}
	\caption{An example of outerplanar near-triangulation with universal vertex $v_1$. Note that in this example $\mathcal{C}(v_2)=\mathcal{C}(v_6)$ in every proper 3-colouring $\mathcal{C}$ of the graph.} \label{fig:univ_vertex}
\end{figure}

The 2-connected outerplanar near-triangulations are more restricted in terms of ($i,j$)-$xy$-extendability than general outerplanar graphs. For example, they admit a unique (up to the choice of colours) 3-colouring, therefore always fall under either point (i) or (ii) of Theorem~\ref{thm:outer}. Hence they cannot be ($1,1$)-$xy$-extendable. For the needs of this paper, however, a much thorough examination of the monomials available (and unavailable) in the polyomials of such near-triangulations with universal vertex was needed. The ensuing flurry of technical lemmas ultimetely serve the purpose of the deep understanding of the polynomials of generalized wheels, which is indispensable in the study of polynomial 3-path extendability of planar graphs. We begin with the specific symmetry of the polynomials of a subfamily of outerplanar near-triangulations with universal vertex. To further single out the universal vertex, we will denote it by $u$ in this section (with the remaining vertices labelled $v_1,\dots,v_k$), in contrast to subsequent sections where it will be denoted as $v_1$.

\begin{figure}[htb]
	\begin{center}
		\includegraphics[scale=0.55]{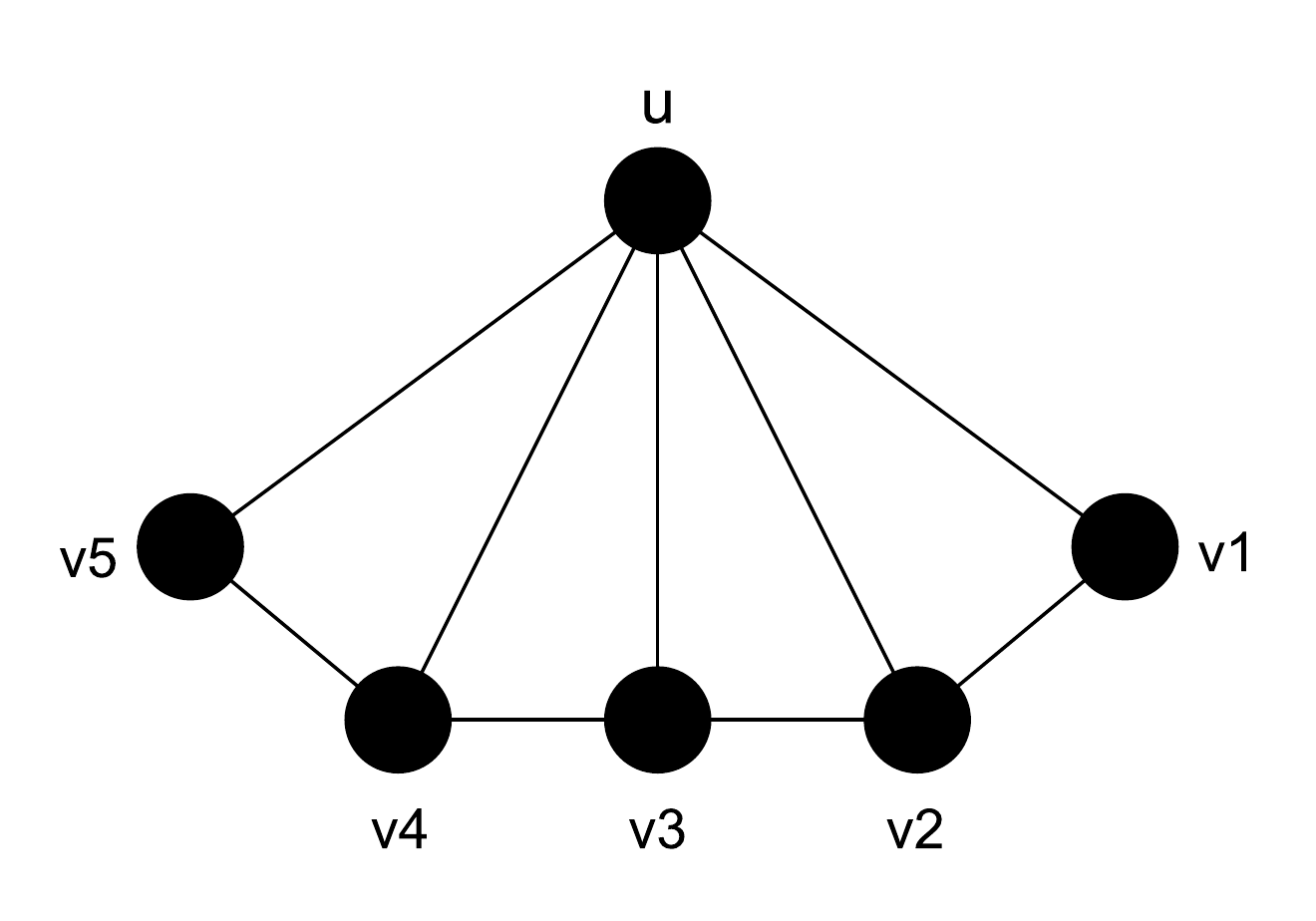}
	\end{center}
	\caption{Labelling of the graph from Figure~\ref{fig:univ_vertex} as used in Lemmas~\ref{lem:signsymmetry}~---~\ref{lem:odd_wheels} and Corollary~\ref{cor:all_121_odd}.} \label{fig:univ_vertex_2}
\end{figure}

\begin{lemma}\label{lem:signsymmetry}
	Let $G$ be a 2-connected outerplanar near-triangulation such that
	$$V(G)=\lbrace u,v_1,\dots,v_{2k} \rbrace,$$
	$$N_G(u)=\lbrace v_1,\dots,v_{2k} \rbrace$$
	and
	$$deg_G(v_1)=2=deg_G(v_{2k}).$$
	If $P(G)$ contains a non-vanishing monomial $\eta u^{\alpha_u}\Pi a_i^{\alpha_i}$, then it also contains $-\eta u^{\alpha_u}\Pi a_i^{\alpha_{2k-i+1}}$.
\end{lemma}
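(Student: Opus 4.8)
The plan is to exploit the reflective symmetry of $G$. Since $G$ is a $2$-connected outerplanar near-triangulation with universal vertex $u$ whose only degree-$2$ vertices are $v_1$ and $v_{2k}$, its outer boundary is the cycle $u v_1 v_2 \cdots v_{2k} u$ and the spokes $uv_i$ triangulate its interior; that is, $G$ is the fan with hub $u$, spoke edges $uv_i$ ($1 \le i \le 2k$) and path edges $v_iv_{i+1}$ ($1 \le i \le 2k-1$). The map $\sigma$ fixing $u$ and sending $v_i \mapsto v_{2k+1-i}$ is then a graph automorphism: it carries each spoke $uv_i$ to the spoke $uv_{2k+1-i}$ and each path edge $v_iv_{i+1}$ to the path edge $v_{2k-i}v_{2k+1-i}$. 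Note that $\sigma$ is an involution.

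First I would let $\phi$ be the ring endomorphism of the polynomial ring induced by $\sigma$ on the variables (so $\phi(u)=u$ and $\phi(v_i)=v_{2k+1-i}$) and track what $\phi$ does to $P(G)=\prod_{ab\in E,\,a<b}(a-b)$. Because $\sigma$ permutes the edge set, applying $\phi$ permutes the linear factors up to sign, so $\phi(P(G)) = \pm P(G)$; the whole content of the lemma is to pin this sign down. As $\sigma$ is an involution, its action on $E(G)$ splits into fixed edges and transposed pairs. A transposed pair $\{e,\sigma(e)\}$ contributes no net sign: writing the two factors as $(a-b)$ and $(c-d)$ in the chosen orientation, one checks that $\phi$ sends their product to itself, since the two orientation signs it produces are equal and hence multiply to $+1$. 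A fixed edge, on the other hand, must have its endpoints swapped by $\sigma$ (they cannot both be fixed, as $u$ is the only fixed vertex and $G$ has no loops), so its factor $(a-b)$ is sent to $(b-a)=-(a-b)$, contributing a single $-1$. Hence $\phi(P(G)) = (-1)^{s}P(G)$, where $s$ is the number of $\sigma$-fixed edges. This sign is independent of the chosen orientation, since reorienting an edge flips both sides simultaneously.

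Next I would count $s$. A spoke $uv_i$ is fixed exactly when $i=2k+1-i$, which is impossible as $2k+1$ is odd, so no spoke is fixed. A path edge $v_iv_{i+1}$ is fixed exactly when $\{i,i+1\}=\{2k-i,2k+1-i\}$, i.e. $i=k$; thus the unique fixed edge is the central path edge $v_kv_{k+1}$, whose endpoints $\sigma$ indeed swaps. Therefore $s=1$ and $\phi(P(G))=-P(G)$. Passing from polynomials back to monomials is then immediate: since $\phi$ merely relabels variables bijectively it is a bijection on monomials, sending $m=u^{\alpha_u}\prod_i v_i^{\alpha_i}$ to $u^{\alpha_u}\prod_i v_i^{\alpha_{2k-i+1}}$ while preserving its coefficient. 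Comparing the coefficient of the monomial $u^{\alpha_u}\prod_i v_i^{\alpha_{2k-i+1}}$ on the two sides of $\phi(P(G))=-P(G)$ shows that its coefficient in $P(G)$ equals $-\eta \neq 0$, which is exactly the asserted monomial.

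I expect the one genuinely delicate point to be the sign bookkeeping in the second paragraph — in particular verifying that each transposed edge-pair cancels and that the single fixed central edge forces an overall factor of $-1$ rather than $+1$. Once the parity of the fixed-edge count is settled, the structural identification of $G$ as a fan and the coefficient comparison are routine.
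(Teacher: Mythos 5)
Your proof is correct and takes essentially the same approach as the paper: both exploit the reflection automorphism $\sigma$ (fixing $u$, sending $v_i \mapsto v_{2k+1-i}$), show that the induced variable substitution sends $P(G)$ to $-P(G)$, and then compare coefficients. The only difference is the sign bookkeeping --- the paper reverses all $2k-1$ path-edge factors directly to get $(-1)^{2k-1}$, while you count $\sigma$-fixed edges (just the central edge $v_kv_{k+1}$); as a minor bonus, your argument also covers $k=1$ uniformly, which the paper handles by explicit computation.
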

\begin{proof}
	If $k=1$, then $G$ is the triangle $uv_1v_2$, hence:
	$$P(G)=u^2v_1^1v_2^0-u^2v_1^0v_2^1-u^1v_1^2v_2^0+u^1v_1^0v_2^2+u^0v_1^2v_2^1-u^0v_1^1v_2^2.$$
	Suppose $k>1$. Observe that $Aut(G)$ has two elements, $id$ and $\sigma$, where $\sigma$ is given by the mapping:
	$$u \mapsto u,$$
	$$v_i \mapsto v_{2k-i+1},$$
	with the subsequent mapping of edge orientations:
	$$uv_i \mapsto uv_{2k-i+1},$$
	$$v_iv_{i+1} \mapsto v_{2k-i+1}v_{2k-i}.$$
	As
	$$P(G)=\prod_{i=1}^{2k}(u-v_i)\prod_{i=1}^{2k-1}(v_i-v_{i+1}),$$
	then
	$$P(\sigma(G))=\prod_{i=1}^{2k}(u-v_{2k-i+1})\prod_{i=1}^{2k-1}(v_{2k-i+1}-v_{2k-i})=$$
	$$=\prod_{i=1}^{2k}(u-v_i)\prod_{i=1}^{2k-1}-(v_{2k-i}-v_{2k-i+1})\stackrel{\text{\tiny j=2k-i}}{=}(-1)^{2k-1}\prod_{i=1}^{2k}(u-v_i)\prod_{j=1}^{2k-1}(v_{j}-v_{j+1})=-P(G).$$
	Hence for every monomial $\eta u^{\alpha_u}\Pi a_i^{\alpha_i}$ in $P(G)$ there is a monomial $-\eta u^{\alpha_u}\Pi a_i^{\alpha_i}$ in $P(\sigma(G))$. By relabelling the vertices of $\sigma(G)$:
	$$u \rightarrow u,$$
	$$v_i \rightarrow v_{k-i+1}$$
	we obtain $G$, and therefore as there was a monomial $-\eta u^{\alpha_u}\Pi a_i^{\alpha_i}$ in $P(\sigma(G))$, we have $-\eta u^{\alpha_u}\Pi a_i^{\alpha_{k-i+1}}$ in $P(G)$.
\end{proof}
The following lemma may be directly deduced from either Theorem~\ref{thm:zhu} or Theorem~\ref{thm:outer}, but we give a simple, independent proof here.
\begin{lemma}\label{lem:012}
	Let $G$ be a 2-connected outerplanar near-triangulation such that
	$$V(G)=\lbrace u,v_1,\dots,v_k \rbrace, k>1,$$
	$$N_G(u)=\lbrace v_1,\dots,v_k \rbrace$$
	and
	$$deg_G(v_1)=2=deg_G(v_k).$$
	Then $P(G)$ contains non-vanishing monomials $\eta_1 u^1 v_1^0 \Pi v_i^2$ and $\eta_2 u^1 v_k^0 \Pi v_i^2$.
\end{lemma}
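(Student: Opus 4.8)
The plan is to argue by induction on $k$, exploiting the very rigid structure of $G$. Since $G$ is a $2$-connected outerplanar near-triangulation with universal vertex $u$ and with $v_1,v_k$ its only degree-$2$ vertices, it must be the fan in which $v_1,\dots,v_k$ form a path and $u$ is joined to each $v_i$; with the orientation fixed in Lemma~\ref{lem:signsymmetry} this gives
$$P(G)=\prod_{i=1}^{k}(u-v_i)\prod_{i=1}^{k-1}(v_i-v_{i+1}).$$
I will first establish the monomial $\eta_1 u^1v_1^0\prod_{i=2}^{k}v_i^2$ and then obtain the second one from it.

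First I would peel off the degree-$2$ vertex $v_k$. Writing $G_0=G-v_k$ (the fan on $u,v_1,\dots,v_{k-1}$, whose degree-$2$ vertices are $v_1$ and $v_{k-1}$), the only edges of $G$ incident to $v_k$ are $uv_k$ and $v_{k-1}v_k$, so
$$P(G)=P(G_0)\,(u-v_k)(v_{k-1}-v_k).$$
Expanding $(u-v_k)(v_{k-1}-v_k)=uv_{k-1}-uv_k-v_{k-1}v_k+v_k^2$, I observe that $v_k$ occurs only in this last factor, so the required exponent $v_k^2$ in the target monomial forces the unique summand $+v_k^2$; the remaining part $u^1v_1^0\prod_{i=2}^{k-1}v_i^2$ must then be supplied by $P(G_0)$, and this is precisely the first monomial for the smaller fan $G_0$. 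Consequently the coefficient of $u^1v_1^0\prod_{i=2}^{k}v_i^2$ in $P(G)$ equals (up to sign) the inductive coefficient for $G_0$, which is non-zero by the induction hypothesis. The base case $k=2$ is the triangle $uv_1v_2$, whose polynomial written out in the proof of Lemma~\ref{lem:signsymmetry} contains $u^1v_2^2$ with coefficient $1$.

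For the second monomial $\eta_2 u^1v_k^0\prod_{i=1}^{k-1}v_i^2$ I would invoke the reflection $\sigma\colon v_i\mapsto v_{k-i+1}$, $u\mapsto u$, which is an automorphism of $G$ and, exactly as in the proof of Lemma~\ref{lem:signsymmetry}, sends $P(G)$ to $(-1)^{k-1}P(G)$ while interchanging the two monomials; hence the second is non-vanishing whenever the first is. Alternatively one can run the symmetric induction, peeling off $v_1$ and using the factor $(u-v_1)(v_1-v_2)$, where the demanded exponent $v_1^2$ now forces the term $-v_1^2$. The one point requiring care---and the only place where cancellation could a priori intrude---is the claim that the peeled factor contributes the wanted power of $v_k$ (resp. $v_1$) in a unique way; this is immediate here, since that vertex does not appear in $P(G_0)$ and the factor has degree $2$ in it, so the target exponent $2$ pins down a single term and no summation over splittings occurs.
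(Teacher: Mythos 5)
Your proof is correct, but it takes a somewhat different route from the paper's. The paper argues non-inductively by assembling $G$ in three stages: it starts from the path $\overrightarrow{v_2\dots v_k}$, whose polynomial contains a non-vanishing $\eta_1 v_2^0\prod_{i\ge 3} v_i^1$; then adjoins the universal vertex $u$, where the demanded exponent $u^0$ forces the unique term $\prod_{i\ge 2}(-v_i)$ of $\prod_{i\ge 2}(u-v_i)$, yielding $\eta_1 u^0 v_2^1 \prod_{i\ge 3} v_i^2$; and finally attaches $v_1$ to $u$ and $v_2$, where $v_1^0$ forces the term $-uv_2$ of $(u-v_1)(v_1-v_2)$. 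Your induction instead peels off the opposite degree-$2$ end $v_k$, where the demanded $v_k^2$ forces the unique term $+v_k^2$ of $(u-v_k)(v_{k-1}-v_k)$, and recurses down to the triangle. Both arguments rest on exactly the same no-cancellation mechanism --- the target exponent of the removed or added vertex pins down a single term of the corresponding edge factors, so the coefficient is inherited unchanged --- so the difference is one of bookkeeping rather than substance: the paper obtains the full exponent pattern in one pass with no induction, while your version is more modular, each step reusing the statement being proved. Where you arguably improve on the paper is the second monomial: the paper dismisses it as ``symmetric,'' whereas you make the symmetry explicit via the reflection automorphism $v_i\mapsto v_{k-i+1}$, correctly extending the sign computation from the proof of Lemma~\ref{lem:signsymmetry} (stated there only for an even number of the $v_i$) to arbitrary $k$ with sign $(-1)^{k-1}$, which is harmless since only non-vanishing is at stake.
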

\begin{proof}
	We begin with a path $\overrightarrow{v_2\dots v_k}$. Its polynomial obviously contains a non-vanishing monomial $\eta_1 v_2^0\Pi v_i^1$. Now adjoin $u$ to all vertices $v_i$. In the polynomial of the resulting graph we can find a monomial $\eta_1 u^0v_2^1\Pi v_i^2$. To conclude the construction of $G$ we attach $v_1$ to $v_2$ and $u$, and clearly in $P(G)$ there is a monomial $\eta_1 u^1 v_1^0 \Pi v_i^2$. The case $\eta_2 u^1 v_k^0 \Pi v_i^2$ is symmetric.
\end{proof}
As \cite{GT2021} only concerned extendability of the outerplanar graphs, all monomials where any of the exponents exceeded 2 (hence any of the vertices was given a list of length greater than 3) were left out as unacceptable. However, as interior vertices of planar graphs may be assigned a list of length up to 5 in extendability context, and the outerplanar near-triangiulations are ultimately only building blocks of graphs we are interested in, specific instances of such monomials need to be investigated here.
\begin{lemma}\label{lem:030}
	Let $G$ be a 2-connected outerplanar near-triangulation such that $$V(G)=\lbrace u,v_1,\dots,v_k \rbrace, k>1,$$
	$$N_G(u)=\lbrace v_1,\dots,v_k \rbrace$$
	and
	$$deg_G(v_1)=2=deg_G(v_k).$$
	Then $P(G)$ contains non-vanishing monomials $\eta_l u^2 v_1^0v_k^0v_l^3 \Pi v_i^2$ for every $2\le l < k$.
\end{lemma}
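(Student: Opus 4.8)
The plan is to exploit the fact that such a $G$ is a \emph{fan}: since $u$ is universal and $G$ is a $2$-connected outerplanar near-triangulation, its edge set consists of the spokes $uv_i$ together with the path edges $v_iv_{i+1}$, so that
$$P(G)=\prod_{i=1}^{k}(u-v_i)\prod_{i=1}^{k-1}(v_i-v_{i+1}).$$
Fix $l$ with $2\le l<k$. The key observation is that $v_l$ has degree exactly $3$ in $G$, with incident edges $uv_l$, $v_{l-1}v_l$ and $v_lv_{l+1}$, so the exponent $3$ on $v_l$ is maximal. First I would separate these three factors, writing
$$P(G)=\pm(u-v_l)(v_{l-1}-v_l)(v_l-v_{l+1})\cdot P(G-v_l),$$
since every remaining edge factor avoids $v_l$ and hence their product is precisely $P(G-v_l)$.

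Next I would analyse $G-v_l$. Deleting $v_l$ breaks the boundary path into two pieces but keeps $u$ adjacent to everything, so $G-v_l$ is the union of the two fans $A=G[\lbrace u,v_1,\dots,v_{l-1}\rbrace]$ and $B=G[\lbrace u,v_{l+1},\dots,v_k\rbrace]$, which share only the cutvertex $u$. Each is again a $2$-connected outerplanar near-triangulation with universal vertex $u$ whose two degree-$2$ vertices are the path endpoints ($v_1$ and $v_{l-1}$ for $A$; $v_{l+1}$ and $v_k$ for $B$). Applying Lemma~\ref{lem:012} to $A$ gives a non-vanishing monomial $u^1v_1^0\prod_{i=2}^{l-1}v_i^2$, and to $B$ gives $u^1v_k^0\prod_{i=l+1}^{k-1}v_i^2$; here I use precisely the variant of Lemma~\ref{lem:012} that places the $0$ on the outer endpoints $v_1$ and $v_k$. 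When $l=2$ or $l=k-1$ the corresponding fan degenerates to the single edge $uv_1$ or $uv_k$, whose polynomial $u-v_1$ resp.\ $u-v_k$ trivially supplies $u^1v_1^0$ resp.\ $u^1v_k^0$. Gluing these two monomials at the shared vertex $u$ by Lemma~\ref{lem:l2} produces a non-vanishing monomial
$$M=\eta\, u^2 v_1^0 v_k^0 \prod_{i\ne 1,l,k} v_i^2$$
in $P(G-v_l)$.

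Finally I would recombine: multiplying $M$ by the $v_l^3$ obtained from selecting $v_l$ in each of the three separated factors yields exactly the target monomial $u^2v_1^0v_k^0v_l^3\prod v_i^2$. The step that needs genuine care --- and the only real obstacle --- is arguing that this monomial does not vanish in $P(G)$, i.e.\ that no cancellation occurs when the full product is expanded. This follows from the extremality of the exponent $3$: since $deg_G(v_l)=3$, the factor $v_l^3$ can arise only by choosing the $v_l$ term in all three of $(u-v_l)$, $(v_{l-1}-v_l)$ and $(v_l-v_{l+1})$, and once that choice is made the remaining contribution must be a monomial of $P(G-v_l)$ not involving $v_l$. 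Hence the coefficient of $u^2v_1^0v_k^0v_l^3\prod_{i\ne1,l,k}v_i^2$ in $P(G)$ equals, up to sign, the coefficient $\eta$ of $M$ in $P(G-v_l)$, which is non-zero by the previous step. This yields the desired $\eta_l\ne0$ for every $2\le l<k$.
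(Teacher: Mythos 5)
Your proof is correct, but it runs the decomposition in the opposite direction from the paper's. You peel off the three edges at $v_l$, writing $P(G)=\pm(u-v_l)(v_{l-1}-v_l)(v_l-v_{l+1})\,P(G-v_l)$, recognize $G-v_l$ as two fans glued at the cutvertex $u$, feed each fan to Lemma~\ref{lem:012} (with the degenerate single-edge cases $l=2$, $l=k-1$ rightly handled separately), glue by Lemma~\ref{lem:l2}, and then transfer the coefficient back using that the exponent $3$ on $v_l$ equals $\deg_G(v_l)$, so the terms producing $v_l^3$ are chosen in a unique way and no cancellation can occur. The paper instead builds $G$ bottom-up: it splits the path $\overrightarrow{v_2\dots v_{k-1}}$ at $v_l$ and glues by Lemma~\ref{lem:l2} to get a non-vanishing $v_2^0v_{k-1}^0v_l^2\Pi v_i^1$, then adjoins $u$ (the monomial with $u^0$ arises from a unique choice in $\prod_i(u-v_i)$), and finally attaches $v_1$ and $v_k$, using that $u^2v_1^0v_2^1v_{k-1}^1v_k^0$ is the only monomial of the attached edges' polynomial with $v_1^0v_k^0$; it never invokes Lemma~\ref{lem:012}. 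So the two arguments share the same toolkit but anchor the non-cancellation step at different vertices: yours at $v_l$ (exponent equal to degree), the paper's at $u$ and at the pair $v_1,v_k$ (zero exponents). Yours is somewhat more modular, reusing Lemma~\ref{lem:012} as a black box and localizing the entire cancellation issue at a single vertex; the paper's is self-contained and mirrors the incremental construction it already used to prove Lemma~\ref{lem:012}.
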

\begin{proof}
	Start with a path $\overrightarrow{v_2\dots v_{k-1}}$ and fix $l$ appropriately. As the polynomial of $\overrightarrow{v_2\dots v_l}$ contians a monomial $\eta_1 v_2^0\Pi v_i^1$, and the polynomial of $\overrightarrow{v_l\dots v_{k-1}}$ contians a monomial $\eta_2 v_{k-1}^0\Pi v_i^1$, the polynomial of $\overrightarrow{v_2\dots v_{k-1}}$ contains the monomial $\eta v_2^0v_{k-1}^0v_l^2 \Pi v_i^1$, which is non-vanishing by Lemma~\ref{lem:l2}. We adjoin $u$ to all vertices $v_i$ to find a monomial $\eta_l u^0v_2^1v_{k-1}^1v_l^3 \Pi v_i^2$ in the polynomial of the resulting graph. It now suffices to add back $v_1$ and $v_k$ and multiply the polynomial by $u^2v_1^0v_2^1v_{k-1}^1v_k^0$ to generate the monomial
	$$\eta_l u^2v_1^0v_2^2v_{k-1}^2v_k^0v_l^3 \prod v_i^2,$$
	and it does not vanish as $u^2v_1^0v_2^1v_{k-1}^1v_k^0$ is the only monomial in the polynomial of the added graph with $v_1^0v_k^0$.
\end{proof}
\begin{lemma}\label{lem:final}
	Let $G$ be a 2-connected outerplanar near-triangulation such that $$V(G)=\lbrace u,v_1,\dots,v_k \rbrace, k>3,$$
	$$N_G(u)=\lbrace v_1,\dots,v_k \rbrace$$
	and
	$$deg_G(v_1)=2=deg_G(v_k).$$
	Then $P(G)$ contains a non-vanishing monomial $\eta u^3 v_1^1v_2^1v_k^0\Pi v_i^2$.
\end{lemma}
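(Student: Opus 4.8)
The plan is to work directly with the explicit product form of $P(G)$. Since $G$ is a $2$-connected outerplanar near-triangulation with universal vertex $u$ whose two degree-$2$ vertices are $v_1,v_k$, it is the graph consisting of the path $\overrightarrow{v_1v_2\dots v_k}$ together with the apex $u$ joined to every $v_i$; that is, up to sign,
$$P(G)=\prod_{i=1}^{k}(u-v_i)\cdot\prod_{i=1}^{k-1}(v_i-v_{i+1}).$$
First I would peel off the degree-$2$ vertex $v_k$. As $v_k$ is adjacent only to $u$ and $v_{k-1}$, the sole way to obtain $v_k^{0}$ is to take $u$ from $(u-v_k)$ and $v_{k-1}$ from $(v_{k-1}-v_k)$, so the $v_k^{0}$-part of $P(G)$ equals $u\,v_{k-1}\,P(G-v_k)$. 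Multiplication by the single monomial $u\,v_{k-1}$ is a bijection on monomials, so the target is non-vanishing in $P(G)$ exactly when $u^{2}v_1^{1}v_2^{1}v_m^{1}\prod_{i=3}^{m-1}v_i^{2}$ is non-vanishing in $P(F)$, where $F=G-v_k$ is the analogous fan on $u,v_1,\dots,v_m$ with $m=k-1\ge 3$ (the argument below is uniform in $m$, so no separate base case is needed).

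Next I would isolate the power of $u$. In $\prod_{i=1}^{m}(u-v_i)$ the coefficient of $u^{2}$ is $(-1)^{m-2}e_{m-2}(v_1,\dots,v_m)$, the elementary symmetric polynomial, and $Q:=\prod_{i=1}^{m-1}(v_i-v_{i+1})$ carries no $u$; hence it remains to show that the coefficient of $T:=v_1v_2v_m\prod_{i=3}^{m-1}v_i^{2}$ in $e_{m-2}(v)\cdot Q$ is nonzero. Writing $e_{m-2}(v)=\sum_{\{a,b\}}\prod_{j\notin\{a,b\}}v_j$ as a sum over omitted pairs, each pair $\{a,b\}$ forces a unique residual monomial $\prod_j v_j^{q_j}$ to be supplied by $Q$, where $q_j=t_j-[\,j\notin\{a,b\}\,]$ and $t$ is the exponent vector of $T$ (so $t_1=t_2=t_m=1$ and $t_j=2$ for $3\le j\le m-1$).

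The crucial observation is that $Q$ is the graph polynomial of a path, hence of a tree, so for any exponent vector the way of orienting each edge towards one endpoint that realises it is \emph{unique} (determined by peeling leaves). Thus every residual monomial occurs in $Q$ with coefficient exactly $0$ or $\pm1$, and the coefficient of $T$ is a signed count of \emph{admissible} pairs. Encoding the orientation of edge $v_iv_{i+1}$ by $x_i\in\{0,1\}$, the prefix sums yield the closed form $x_j=j-\sum_{i\le j}q_i$, so admissibility is the single condition $j-1\le\sum_{i\le j}q_i\le j$ for all $j$; evaluating this on the base profile of $T$ shows it holds precisely when $\min\{a,b\}\le 2$. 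A short computation gives that the sign attached to a pair is $(-1)^{a+b}$ up to a global factor, and the alternating sum over admissible pairs collapses,
$$\sum_{\min\{a,b\}\le 2}(-1)^{a+b}=\sum_{b=2}^{m}(-1)^{b+1}+\sum_{b=3}^{m}(-1)^{b}=-1,$$
independently of $m$. Hence the coefficient of $T$ is $\pm1\neq 0$, which proves the lemma.

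The main obstacle is the sign cancellation concealed in the $u^{2}$ (equivalently $u^{3}$) coefficient: unlike the $u^{0}$ and $u^{1}$ monomials treated in Lemmas~\ref{lem:012} and~\ref{lem:030}, here two apex-edges must supply $u$, and the naive constructive route of adjoining the two degree-$2$ vertices would pin both of them to exponent $0$ rather than the required exponent $1$. The direct expansion above sidesteps this, because the tree structure of $Q$ makes every residual coefficient $\pm1$ and reduces the whole question to the elementary alternating sum above; the only delicate points are verifying that no admissible pair is dropped and that the signs genuinely collapse to $-1$.
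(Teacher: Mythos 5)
Your proof is correct, and it takes a genuinely different route from the paper's. The paper argues by parity of $k$ and leans on imported colouring machinery: for $k$ even, the forced equality $\mathcal{C}(v_1)=\mathcal{C}(v_{k-1})$ in every $3$-colouring lets it invoke Theorem~\ref{thm:outer} together with \cite[Theorem 2.1]{GT2021} to obtain $\eta u^2v_1^1v_2^1v_{k-1}^1\Pi v_i^2$ in $P(G-v_k)$, which lifts to the target by exactly the peeling step you also use (multiplication by $u\,v_{k-1}$, the only way to realise $v_k^0$); for $k$ odd it first produces a monomial containing $v_k^1$ and then eliminates the unwanted alternative $\eta u^1v_1^1v_2^1v_{k-1}^2\Pi v_i^2$ by repeatedly stripping degree-$2$ vertices until the known vanishing of $u^1v_1^1v_2^1$ in a triangle yields a contradiction. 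You instead compute the coefficient outright: extract the $u^2$-part of $P(G-v_k)$ as $(-1)^{m-2}e_{m-2}(v)\,Q$, use that $Q$ is the polynomial of a tree so every coefficient of $Q$ lies in $\{0,\pm 1\}$ (orientations of a forest are determined by their out-degrees), characterize the contributing omitted pairs $\{a,b\}$ via the prefix-sum criterion (equivalent to $\min\{a,b\}\le 2$), and evaluate the telescoping signed sum to $-1$. I checked the delicate points: the admissibility criterion is right, the true sign of a pair is $-(-1)^{a+b}$ (so $(-1)^{a+b}$ up to a global factor, as you claim), the sum indeed collapses to $-1$, and the argument is uniform down to $m=3$. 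Your route buys self-containedness --- it needs neither Theorem~\ref{thm:outer} nor \cite[Theorem 2.1]{GT2021}, the latter being genuinely required by the paper since the intermediate monomial has three exponent-$1$ vertices --- plus uniformity in the parity of $k$ and the exact value $|\eta|=1$ for free. The paper's route is shorter modulo citations and stays inside the colouring-condition dictionary that organizes all the surrounding lemmas (Lemmas~\ref{lem:no_030_even}--\ref{lem:odd_wheels} follow the same template), which is why it is structured as it is.
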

\begin{proof}
	Suppose at first that $k$ is even. Then $\mathcal{C}(v_1) = \mathcal{C}(v_{k-1})$ for every 3-colouring $\mathcal{C}$ of $G$, hence by Theorem~\ref{thm:outer} and \cite[Theorem 2.1]{GT2021} $P(G-v_k)$ contains a non-vanishing monomial  $\eta_l u^2 v_1^1v_2^1v_{k-1}^1\Pi v_i^2$, which in turn gives a non-vanishing $\eta u^3 v_1^1v_2^1v_k^0\Pi v_i^2$ in $P(G)$.
	
	Now assume $k$ is odd. Then $\mathcal{C}(v_1) = \mathcal{C}(v_{k-1})$ for every 3-colouring $\mathcal{C}$ of $G$, hence $P(G-v_k)$ contains a non-vanishing monomial  $\eta' u^2 v_1^1v_2^1v_k^1\Pi v_i^2$. Therefore, $P(G-v_k)$ contains either $\eta u^2 v_1^1v_2^1v_{k-1}^1\Pi v_i^2$ or $\eta u^1 v_1^1v_2^1v_{k-1}^2\Pi v_i^2$. Suppose the latter is true. It would imply that in $P(G-\lbrace v_{k-1},v_k \rbrace)$ there is a monomial $\eta u^1 v_1^1v_2^1v_{k-2}^2\Pi v_i^2$, and continuing in this manner, we would come to the conclusion that there is a non-vanishing monomial $\eta u^1v_1^1v_2^1$. But that leads to a contradiction as such monomials are known to vanish in the polynomials of triangles. Hence in $P(G-v_k)$ there is a non-vanishing monomial $\eta u^2 v_1^1v_2^1v_{k-1}^1\Pi v_i^2$, and consequently there is $\eta u^3 v_1^1v_2^1v_k^0\Pi v_i^2$ in $P(G)$.
\end{proof}
The existence of several types of monomials depends on the parity of the order of the considered graph. This stems form the fact that this parity determines the outcome of the colouring condition as given in the statement of Theorem~\ref{thm:outer}. We take on the odd order graphs first.
\begin{lemma}\label{lem:no_030_even}
	Let $G$ be a 2-connected outerplanar near-triangulation such that
	$$V(G)=\lbrace u,v_1,\dots,v_{2k} \rbrace, k>1,$$
	$$N_G(u)=\lbrace v_1,\dots,v_{2k} \rbrace,$$
	$$deg_G(v_1)=2=deg_G(v_{2k})$$
	and vertices $v_i$ are ordered counter-clockwise. Then $u^3v_1^0v_{2k-1}^0\Pi v_i^2$ vanishes in $P(G)$.
\end{lemma}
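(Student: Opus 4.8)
The plan is to deduce the vanishing entirely from the reflection symmetry already isolated in Lemma~\ref{lem:signsymmetry}, with no explicit expansion. Recall that the hypotheses force $G$ to be the fan on the universal vertex $u$ together with the path $v_1v_2\cdots v_{2k}$, so that
$$P(G)=\prod_{i=1}^{2k}(u-v_i)\prod_{i=1}^{2k-1}(v_i-v_{i+1})$$
up to sign, and the single non-trivial automorphism is the order-reversing map $\sigma\colon u\mapsto u,\ v_i\mapsto v_{2k-i+1}$. A preliminary sanity check I would record is that the candidate monomial has total degree $3+2(2k-2)=4k-1$, equal to the number of edges; since $P(G)$ is homogeneous of that degree, the monomial is genuinely top-degree, so its coefficient is a real quantity that must be pinned down rather than dismissed on degree grounds.

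The heart of the argument is that a $\sigma$-invariant top-degree monomial cannot survive. By Lemma~\ref{lem:signsymmetry}, whenever $\eta\,u^{\alpha_u}\prod v_i^{\alpha_i}$ occurs in $P(G)$, the reflected monomial $-\eta\,u^{\alpha_u}\prod v_i^{\alpha_{2k-i+1}}$ occurs as well; the decisive minus sign is exactly $P(\sigma G)=(-1)^{2k-1}P(G)=-P(G)$, coming from the path $v_1\cdots v_{2k}$ having the \emph{odd} number $2k-1$ of edges (this is precisely what distinguishes the present odd-order case). To exploit this I would verify that the target exponent sequence is a palindrome under $\sigma$, that is $\alpha_{2k-i+1}=\alpha_i$ for every $i$: the exponent $3$ sits on the fixed vertex $u$; the two exponent-$0$ vertices must form a single $\sigma$-orbit $\{v_i,v_{2k-i+1}\}$; and every remaining vertex must carry the common, $\sigma$-invariant exponent $2$. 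Granting the palindrome, the reflected monomial coincides with the original, Lemma~\ref{lem:signsymmetry} forces its coefficient to equal its own negative, and hence to be $0$.

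The step I expect to be the real obstacle is the bookkeeping that makes the palindrome genuine, namely confirming that the two exponent-$0$ positions constitute a $\sigma$-orbit. The natural such orbit is the pair of degree-$2$ endpoints $\{v_1,v_{2k}\}$, which $\sigma$ interchanges; one must check that the prescribed zeros land exactly on a $\sigma$-orbit and that all exponent-$2$ vertices are matched among themselves, so that the equality $\alpha_{2k-i+1}=\alpha_i$ holds uniformly. This is delicate because the cancellation is entirely conditional on it: if the two zeros did \emph{not} form a $\sigma$-orbit, Lemma~\ref{lem:signsymmetry} would merely equate the coefficient with that of a \emph{different} monomial and no vanishing would follow. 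The same sensitivity appears in the parity: in the even-order analogue the path carries an even number of edges, $P(\sigma G)=+P(G)$, and the symmetric monomial need \emph{not} vanish. Thus the whole weight of the statement rests on the coincidence of (a) the odd edge-count supplying the sign flip and (b) the exponent pattern being a $\sigma$-palindrome, and the plan is to make both explicit and then invoke Lemma~\ref{lem:signsymmetry} a single time.
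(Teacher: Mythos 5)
Your argument is correct, but note that it proves the lemma in its \emph{intended} reading, with the zero exponents on the two degree-$2$ endpoints $v_1$ and $v_{2k}$. As literally printed, the statement places the zeros on $v_1$ and $v_{2k-1}$, so that $v_{2k}$ carries exponent $2$; that exponent vector is \emph{not} a $\sigma$-palindrome, and in fact that monomial does not vanish --- for $2k=4$ a direct expansion gives coefficient $\pm 1$ for $u^3v_1^0v_2^2v_3^0v_4^2$ in $P(G)$. So $v_{2k-1}$ must be a typo for $v_{2k}$, which is confirmed by how the lemma is used in the proof of Theorem~\ref{thm:even_ok} (the vanishing monomial $v^3v_2^0v_k^0\Pi v_i^2$ there has its zeros on the two endpoints of $G-v_1$) and by the companion Lemma~\ref{lem:all_040_even}, which writes $v_{2k}^0$. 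You were right to single out the orbit condition as the load-bearing step: your verification that the zeros sit on the $\sigma$-orbit $\lbrace v_1,v_{2k}\rbrace$ is exactly what the corrected statement requires, and your proof would (correctly) fail for the statement as printed.

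Granting the corrected indices, your route is genuinely different from the paper's. The paper deletes $v_1$: a non-vanishing $u^3v_1^0v_{2k}^0\Pi v_i^2$ in $P(G)$ would force a non-vanishing $u^2v_2^1v_{2k}^0\Pi v_i^2$ in $P(G-v_1)$, and since every $3$-colouring $\mathcal{C}'$ of $G-v_1$ satisfies $\mathcal{C}'(v_2)=\mathcal{C}'(v_{2k})$ (this is where the parity of $2k$ enters), the characterization of admissible monomials for outerplanar graphs rules such a monomial out --- note that Theorem~\ref{thm:outer} as stated is an existence result, so the non-existence direction is really supplied by the fuller characterization in \cite[Theorem 2.1]{GT2021}. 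You replace all of this by a single application of Lemma~\ref{lem:signsymmetry}: the path has the odd number $2k-1$ of edges, so $P(\sigma(G))=-P(G)$, and a palindromic exponent vector forces its coefficient to equal its own negative, hence zero. Your approach buys self-containedness (no colouring argument, no appeal to the converse of the outerplanar characterization) and makes the parity mechanism transparent, as your sanity check on the odd-order case shows. The paper's approach buys flexibility: the colouring/characterization method also excludes \emph{asymmetric} exponent patterns, where the symmetry argument is silent --- this is precisely how the analogous exclusions are obtained in Lemma~\ref{lem:odd_wheels} --- so it is the tool that generalizes beyond this particular palindromic monomial.
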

\begin{proof}
	Observe at first, that the condition that there is an even number of vertices $v_i$ is equivalent to the condition that for every 3-colouring $\mathcal{C}$ of $G$ we have $\mathcal{C}(v_1) \neq \mathcal{C}(v_{2k})$. Therefore if we consider $G'=G-v_1$, in every 3-colouring $\mathcal{C}'$ of $G'$, $\mathcal{C}'(v_2) = \mathcal{C}'(v_{2k})$. If there was a non-vanishing monomial $\eta u^3v_1^0v_{2k-1}^0\Pi v_i^2$ in $P(G)$, then $P(G')$ would contain a non vanishing  monomial $\eta u^2v_2^1v_{2k-1}^0\Pi v_i^2$, but that would contradict Theorem~\ref{thm:outer}.
\end{proof}
\begin{lemma}\label{lem:all_040_even}
	Let $G$ be a 2-connected outerplanar near-triangulation such that $$V(G)=\lbrace u,v_1,\dots,v_{2k} \rbrace,$$
	$$N_G(u)=\lbrace v_1,\dots,v_{2k} \rbrace,$$
	$$deg_G(v_1)=2=deg_G(v_{2k})$$
	and vertices $v_i$ are ordered counter-clockwise. Let $1\le l<k$. Then $P(G)$ contains  monomials $\eta u^4v_1^0v_{2k}^0v_{2l}^1 \Pi v_i^2$ and $-\eta u^4v_1^0v_{2k}^0v_{2l+1}^1 \Pi v_i^2$, where $|\eta| = 1$.
\end{lemma}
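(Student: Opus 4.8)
The plan is to delete the single path edge $v_{2l}v_{2l+1}$ that sits between the two special vertices and to read off the two desired monomials as the two "halves" coming from the factor $(v_{2l}-v_{2l+1})$. Write $P(G)=(v_{2l}-v_{2l+1})\,P(G')$ with $G'=G-v_{2l}v_{2l+1}$. Deleting this path edge breaks the path $v_1\cdots v_{2k}$ in two while leaving the universal vertex $u$ untouched, so $G'$ is the union of two fans $F_L$ (on $u,v_1,\dots,v_{2l}$) and $F_R$ (on $u,v_{2l+1},\dots,v_{2k}$) glued only at $u$; hence the graph polynomial factors, $P(G')=P(F_L)P(F_R)$. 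Both halves are $2$-connected outerplanar near-triangulations with universal vertex and an \emph{even} number of outer vertices, so Lemmas~\ref{lem:signsymmetry} and~\ref{lem:012} apply to each.

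Next I would compare the two targets $A_+=u^4v_1^0v_{2k}^0v_{2l}^1\prod v_i^2$ (all other middle vertices squared, so $v_{2l+1}^2$) and $A_-=u^4v_1^0v_{2k}^0v_{2l+1}^1\prod v_i^2$ (so $v_{2l}^2$) against $(v_{2l}-v_{2l+1})P(G')$. Writing $q_{ab}$ for the coefficient in $P(G')$ of $u^4v_1^0v_{2k}^0v_{2l}^av_{2l+1}^b\prod v_i^2$, a direct expansion gives the coefficient of $A_+$ as $q_{02}-q_{11}$ and that of $A_-$ as $q_{11}-q_{20}$. Both $A_\pm$ have degree $|E(G)|$, i.e. are of top degree, so each $q_{ab}$ is a top-degree coefficient of $P(F_L)P(F_R)$; since the degrees of the two factors must then each be maximal, the power of $u$ splits in a \emph{unique} admissible way and every middle exponent is forced to equal $2$. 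In particular $q_{02}$ and $q_{20}$ each contain as a factor the coefficient of $u^3$ with both degree-two endpoints at exponent $0$ in an even fan, while $q_{11}$ is a product of two coefficients of the shape $u^2\cdot(\text{one endpoint }0)(\text{other endpoint }1)$.

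The two quantitative inputs I then establish are: (a) in any even fan the monomial $u^3(\text{both endpoints})^0\prod v_i^2$ vanishes, and (b) in any even fan $F_n$ the monomial $u^2v_1^0v_n^1\prod v_i^2$ has coefficient $\pm1$. Claim (a) is immediate from Lemma~\ref{lem:signsymmetry}: this monomial is invariant under the reflection $v_i\mapsto v_{n-i+1}$, which reverses the sign of $P$, so its coefficient equals its own negative and is $0$; hence $q_{02}=q_{20}=0$. For (b) I would peel off the last vertex via $P(F_n)=(u-v_n)(v_{n-1}-v_n)P(F_{n-1})$, extract the coefficient of $v_n^1$, and obtain $c_n=-\bigl(\eta_1(F_{n-1})+c_{n-1}\bigr)$, where $c_n$ denotes the coefficient in (b) and $\eta_1(F_{n-1})=\pm1$ is the Lemma~\ref{lem:012} coefficient of $u^1v_1^0\prod v_i^2$. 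When $n-1$ is odd the term $c_{n-1}$ vanishes, because an odd fan forces $\mathcal C(v_1)=\mathcal C(v_{\mathrm{last}})$ in every proper $3$-colouring, so by Theorem~\ref{thm:outer} no non-vanishing monomial can pair an endpoint exponent $0$ with all other exponents $\le2$. Thus for even $n$, $c_n=-\eta_1(F_{n-1})=\pm1$, and together with the remaining $\pm1$ factors from Lemma~\ref{lem:012} this gives $q_{11}=\pm1$. Consequently the coefficient of $A_+$ is $-q_{11}$ and that of $A_-$ is $q_{11}$, exactly opposite and of absolute value $1$; taking $\eta=-q_{11}$ yields the statement.

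The step I expect to be the main obstacle is the bookkeeping in the edge-removal argument: one must guarantee that only the three coefficients $q_{02},q_{11},q_{20}$ feed into $A_\pm$ and that the power $u^4$ splits uniquely across $F_L$ and $F_R$. This is exactly what the top-degree observation delivers—each contributing monomial of either factor must itself be maximal, which pins the $u$-split and forces all middle exponents to be $2$. Once this is secured, the vanishing of $q_{02},q_{20}$ from the reflection symmetry makes the opposite-sign conclusion automatic, so the only genuine computation left is the $\pm1$ evaluation in (b).
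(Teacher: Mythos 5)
Your proposal is correct, and at the structural level it is the same proof as the paper's: both delete the path edge $v_{2l}v_{2l+1}$, factor $P(G-v_{2l}v_{2l+1})=P(F_L)P(F_R)$ at the cutvertex $u$, and reduce everything to showing that the cross coefficient $q_{11}$ has absolute value $1$ while the extreme coefficients $q_{02},q_{20}$ vanish (the paper phrases the latter as a cancellation-by-contradiction, but after the same homogeneity bookkeeping that pins the $u$-split it comes down to exactly your claim). The differences lie in how the two ingredient facts are established, and they are instructive. For $q_{11}\neq 0$ the paper just invokes Theorem~\ref{thm:outer}(ii) on each even fan (forced unequal endpoint colours give the monomials $u^2v_1^0v_{2l}^1\Pi v_i^2$ and $u^2v_{2l+1}^1v_{2k}^0\Pi v_i^2$), glues them by Lemma~\ref{lem:l2}, and gets $|\eta|=1$ by citing \cite[Theorem 2.1]{GT2021}; you instead compute the coefficient by the peeling recursion $c_n=-\bigl(\eta_1(F_{n-1})+c_{n-1}\bigr)$, killing $c_{n-1}$ for odd fans via Theorem~\ref{thm:outer}(i). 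This is more self-contained, but note one small over-citation: Lemma~\ref{lem:012} as stated only asserts $\eta_1\neq 0$, not $|\eta_1|=1$; to finish your magnitude claim you need either the uniqueness-of-expansion argument implicit in that lemma's proof or the same appeal to \cite[Theorem 2.1]{GT2021} that the paper makes, so this is a citation gap rather than a mathematical one. For $q_{02}=q_{20}=0$ the paper uses Lemma~\ref{lem:no_030_even} (proved there by a $3$-colouring argument through Theorem~\ref{thm:outer}); your derivation from Lemma~\ref{lem:signsymmetry} --- the monomial $u^3(\text{both endpoints})^0\Pi v_i^2$ is reflection-invariant while the reflection negates $P$ of an even fan, so its coefficient is zero --- is a genuinely different and arguably cleaner proof of that same fact, which you could equally well have cited directly since it is the immediately preceding lemma.
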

\begin{proof}
	Consider $G-v_{2l}v_{2l+1}$. This graph has $u$ as its cutvertex that we use to split it into components $G_1$ and $G_2$, where $v_1 \in V(G_1)$. For every 3-colouring $\mathcal{C}_1$ of $G_1$ we have $\mathcal{C}_1(v_1) \neq \mathcal{C}_1(v_{2l})$ and for every 3-colouring $\mathcal{C}_2$ of $G_2$ we have  $\mathcal{C}_2(v_{2l+1}) \neq \mathcal{C}_2(v_{2k})$. Therefore by Theorem~\ref{thm:outer} $P(G_1)$ contains a non-vanishing monomial $\eta_1 u^2v_1^0v_{2l}^1 \Pi v_i^2$ and $P(G_2)$ contains a non-vanishing monomial $\eta_2 u^2v_{2l+1}^1v_{2k}^0 \Pi v_i^2$. By Lemma~\ref{lem:l2}, these combine into $\eta u^4v_1^0v_{2k}^0v_{2l}^1v_{2l+1}^1\Pi v_i^2$ in $P(G-v_{2l}v_{2l+1})$, with
	$$\eta=\eta_1\eta_2 \neq 0.$$
	Clearly,
	$$P(G)=P(G-v_{2l}v_{2l+1})(v_{2l}-v_{2l+1})=P(G-v_{2l}v_{2l+1})v_{2l}-P(G-v_{2l}v_{2l+1})v_{2l+1},$$
	hence $P(G)$ contains monomials $\eta u^4v_1^0v_{2k}^0v_{2l}^2v_{2l+1}^1\Pi v_i^2$ (in $P(G-v_{2l}v_{2l+1})v_{2l}$) and $-\eta u^4v_1^0v_{2k}^0v_{2l}^1v_{2l+1}^2\Pi v_i^2$ (in $-P(G-v_{2l}v_{2l+1})v_{2l+1}$). Suppose that the former vanishes in $P(G)$, which would imply the existence of monomial $-\eta u^4v_1^0v_{2k}^0v_{2l}^2v_{2l+1}^1\Pi v_i^2$ in $-P(G-v_{2l}v_{2l+1})v_{2l+1}$, and consequently $\eta u^4v_1^0v_{2k}^0v_{2l}^2v_{2l+1}^0\Pi v_i^2$ in $P(G-v_{2l}v_{2l+1})$. But as $P(G_2)$ is homogeneous, it would have to contain a monomial $\eta_2' u^3v_{2l+1}^0v_{2k}^0 \Pi v_i^2$, which is not the case. By the symmetric argument we prove the non-vanishing of the second monomial. The fact that $|\eta| = 1$ comes from the fact that the absolute values coefficients of non-vanishing admissible monomials was shown to be 1 by~\cite[Theorem 2.1]{GT2021}.
\end{proof}
As a direct consequence of the two preceding lemmas, we get the following combined lemma.
\begin{lemma}\label{lem:even_wheels}
	Let $G$ be an outerplanar near-triangulation on $$V(G)=\lbrace u,v_1,\dots,v_{2k} \rbrace, k>1,$$
	$$N_G(u)=\lbrace v_1,\dots,v_{2k} \rbrace,$$
	$$deg_G(v_1)=2=deg_G(v_{2k})$$
	and vertices $v_i$ ordered counter-clockwise. Then $P(G)$ contains a non-vanishing monomial $\eta_l u^4v_1^0v_{2k-1}^0v_l^1\Pi v_i^2$, $|\eta_l| = 1$, for every $2 \le l < 2k$, while $u^3v_1^0v_{2k-1}^0\Pi v_i^2$ vanishes.
\end{lemma}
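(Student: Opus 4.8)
The vanishing assertion requires no new work: the monomial $u^3v_1^0v_{2k-1}^0\prod v_i^2$ is exactly the one proved to vanish in Lemma~\ref{lem:no_030_even}, so that half is immediate. My plan for the non-vanishing assertion is to transport the monomials furnished by Lemma~\ref{lem:all_040_even}, which pin the two degree-two ends as $v_1^0v_{2k}^0$, onto monomials that instead pin $v_1^0v_{2k-1}^0$, by peeling off the degree-two vertex $v_{2k}$ and tracking what happens to the single free exponent.

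Concretely, since $v_{2k}$ has neighbours only $u$ and $v_{2k-1}$, we have $P(G)=P(G')\,(u-v_{2k})(v_{2k-1}-v_{2k})$ with $G'=G-v_{2k}$, and $v_{2k}$ occurs only in the quadratic factor $(u-v_{2k})(v_{2k-1}-v_{2k})=uv_{2k-1}-(u+v_{2k-1})v_{2k}+v_{2k}^2$. Collecting coefficients by the power of $v_{2k}$ therefore converts statements about $P(G)$ into statements about $P(G')$. The target monomial $u^4v_1^0v_{2k-1}^0v_l^1\prod v_i^2$ carries $v_{2k}^2$, so its coefficient is read off from the $v_{2k}^2$ term and equals the coefficient $X_l$ of $u^4v_1^0v_{2k-1}^0v_l^1\prod v_i^2$ in $P(G')$; the Lemma~\ref{lem:all_040_even} monomial $u^4v_1^0v_{2k}^0v_l^1\prod v_i^2$ carries $v_{2k}^0$, so its coefficient is read off from the $uv_{2k-1}$ term and equals the coefficient $W_l$ of $u^3v_1^0v_{2k-1}^1v_l^1\prod v_i^2$ in $P(G')$. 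By Lemma~\ref{lem:all_040_even} we already know $|W_l|=1$.

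Now $v_{2k-1}$ is itself a degree-two vertex of the odd fan $G'$, its neighbours being $u$ and $v_{2k-2}$, so I repeat the peeling: writing $P(G')=P(G'')(u-v_{2k-1})(v_{2k-2}-v_{2k-1})$ with $G''=G'-v_{2k-1}$ and sorting by the power of $v_{2k-1}$ produces a single linear identity of the shape $X_l=-W_l-Y_l$, where $Y_l$ is the coefficient of $u^2v_1^0v_{2k-2}^2v_l^1\prod v_i^2$ in $P(G'')$. Everything thus reduces to deciding whether $Y_l$ vanishes. To evaluate $Y_l$ I would strip the degree-two ends of $G''$ one at a time; each squared end sits in the $v^2$ slot of its own peeling factor, so Lemma~\ref{lem:l2} keeps the coefficient intact, and $Y_l$ collapses to the coefficient of $u^2v_1^0v_l^1\prod v_i^2$ in $P(F_l)$, where $F_l$ is the sub-fan on $u,v_1,\dots,v_l$ with degree-two ends $v_1$ and $v_l$. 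This surviving monomial is admissible in the sense of Theorem~\ref{thm:outer}, so its (non-)vanishing is governed entirely by the forced colour relation between $v_1$ and $v_l$ in $F_l$.

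The crux — and the step I expect to fight with — is exactly this colouring dichotomy, which is where the parity of $l$ becomes decisive. When $\mathcal{C}(v_1)=\mathcal{C}(v_l)$ is forced in every proper $3$-colouring of $F_l$, case (i) of Theorem~\ref{thm:outer} says every non-vanishing admissible monomial carries $v_1^1$; since our monomial carries $v_1^0$, we get $Y_l=0$ and hence $X_l=-W_l$ is non-vanishing with $|\eta_l|=|X_l|=1$, as required. Carrying this parity faithfully through the two peelings, verifying that the collapse of $Y_l$ down to $F_l$ is legitimate for each admissible index, and disposing of the small base cases where $F_l$ degenerates to the triangle $uv_1v_2$, is where the genuine effort lies; the sign bookkeeping in the identity $X_l=-W_l-Y_l$ is then routine once the colouring input from Theorem~\ref{thm:outer} is in hand.
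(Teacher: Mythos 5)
The paper offers no argument for this lemma at all: it is introduced with the sentence ``as a direct consequence of the two preceding lemmas, we get the following combined lemma,'' and indeed, once you notice that the $v_{2k-1}$ in the statement is a typo for $v_{2k}$ (the second degree-two end), the lemma is word-for-word the union of Lemma~\ref{lem:no_030_even} (the vanishing half, which you correctly identified) and Lemma~\ref{lem:all_040_even}, whose monomials $\eta u^4v_1^0v_{2k}^0v_{2l}^1\Pi v_i^2$ and $-\eta u^4v_1^0v_{2k}^0v_{2l+1}^1\Pi v_i^2$, $1\le l<k$, sweep exactly the indices $2\le l<2k$. Two symptoms flag the typo: at $l=2k-1$ the printed statement is self-contradictory (it would demand $v_{2k-1}^0v_{2k-1}^1$ inside one monomial), and when the lemma is actually invoked --- in the proof of Lemma~\ref{lem:ordinary2k1}, applied to $P(G-v_1)$ --- the two pinned vertices are $v_2$ and $v_{2k+1}$, i.e.\ both degree-two ends of the fan. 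You instead took $v_{2k-1}$ at face value and built a genuine transport argument where the paper intends none.

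Your algebra is correct as far as it goes: the factorization $P(G)=P(G-v_{2k})(u-v_{2k})(v_{2k-1}-v_{2k})$, the identity $X_l=-W_l-Y_l$, and the collapse of $Y_l$ to the sub-fan $F_l$ all check out, and for odd $l$ (where $F_l$ forces $\mathcal{C}(v_1)=\mathcal{C}(v_l)$, so $Y_l=0$) your conclusion holds. The genuine gap is the even-$l$ case, which you defer as ``routine sign bookkeeping once the colouring input is in hand.'' It is not routine, and it cannot be completed: for even $l$ the fan $F_l$ forces $\mathcal{C}(v_1)\neq\mathcal{C}(v_l)$, case (ii) of Theorem~\ref{thm:outer} permits the pattern $v_1^0v_l^1$, and $Y_l=\pm 1$ (already $Y_2=-1$, read off the triangle expansion displayed in the proof of Lemma~\ref{lem:signsymmetry}); whether $X_l=-W_l-Y_l$ survives is then a sign comparison, and it resolves against you --- the literal target monomial vanishes for even $l$. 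This is visible by hand at $k=2$, $l=2$: to realize $v_1^0v_3^0$ one must take $v_2$ from both factors $(v_1-v_2)$ and $(v_2-v_3)$, so every monomial of $P(G)$ with $v_1^0v_3^0$ carries $v_2$-exponent at least $2$, and the coefficient of $u^4v_1^0v_2^1v_3^0v_4^2$ is $0$; a short direct computation gives the same at $k=3$, $l=2$. So under your reading half of the claimed range $2\le l<2k$ is irrecoverably false (and $l=2k-1$ is meaningless), which is precisely the signal that the statement should be corrected to pin $v_1^0v_{2k}^0$ --- after which no new proof is needed.
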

We now turn our attention to the near-triangulations of an even order.
\begin{lemma}\label{lem:odd_wheels}
	Let $G$ be a 2-connected outerplanar near-triangulation such that
	$$V(G)=\lbrace u,v_1,\dots,v_{2k-1} \rbrace, k>1,$$
	$$N_G(u)=\lbrace v_1,\dots,v_{2k-1} \rbrace,$$
	$$deg_G(v_1)=2=deg_G(v_{2k-1})$$
	and vertices $v_i$ are ordered counter-clockwise. Then $P(G)$ contains a non-vanishing monomial $\eta u^3v_1^0v_{2k-1}^0\Pi v_i^2$. Moreover, $\eta_l u^4v_1^0v_{2k-1}^0v_l^1\Pi v_i^2$ does not vanish for every odd $3 \le l < 2k-1$ and vanishes for every even $2 \le l < 2k$, and $|\eta|=|\eta_l|=1$.
\end{lemma}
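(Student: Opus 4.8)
The plan is to treat $G$ as the triangulated fan with universal vertex $u$ and path $v_1v_2\cdots v_{2k-1}$, so that $P(G)=\prod_{i=1}^{2k-1}(u-v_i)\prod_{i=1}^{2k-2}(v_i-v_{i+1})$ up to sign, and to reduce everything to two facts about an arbitrary fan $F$ on $u,w_1,\dots,w_p$ (endpoints $w_1,w_p$). First, for the \emph{both-endpoints} monomial $u^3w_1^0w_p^0\prod_{i=2}^{p-1}w_i^2$ I claim it is non-vanishing with coefficient $\pm1$ when $p$ is odd and vanishes when $p$ is even. Second, for the \emph{single-endpoint} monomial $u^2w_1^{\beta}w_p^{\gamma}\prod_{i=2}^{p-1}w_i^2$ with $\beta+\gamma=1$ I claim it vanishes when $p$ is odd and is non-vanishing ($\pm1$) when $p$ is even. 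The first assertion of the lemma is exactly the both-endpoints fact for $p=2k-1$, and the $u^4$ monomials will follow by a splitting argument; both dichotomies are driven by the fact that the unique $3$-colouring of $F$ colours $w_1,w_p$ alike iff $p$ is odd.

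For the both-endpoints fact the even case is immediate from Lemma~\ref{lem:signsymmetry}: the monomial is fixed by the reversal $w_i\mapsto w_{p+1-i}$ which negates $P(F)$, so its coefficient equals its own negative and is $0$. For the odd case (the first assertion, $p=2k-1$) I would factor out the degree-$2$ endpoint, $P(F)=\pm(u-w_1)(w_1-w_2)P(F-w_1)$, and read off the $w_1^0$ part $\mp u w_2\,P(F-w_1)$; this equates the coefficient in question with that of $u^2w_2^1w_p^0\prod_{i=3}^{p-1}w_i^2$ in the \emph{even} fan $F-w_1$. By homogeneity any monomial supplied by Theorem~\ref{thm:outer} for $(w_2,w_p)$ has $u$ and all interior variables at exponent exactly $2$, so it is one of $u^2w_2^1w_p^0\prod w_i^2$, $u^2w_2^0w_p^1\prod w_i^2$; since $p-1$ is even these endpoints get different colours (case~(ii)), such a monomial exists, and Lemma~\ref{lem:signsymmetry} applied to $F-w_1$ promotes "one of them" to "both", giving the required one with coefficient $\pm1$. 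The same homogeneity-plus-symmetry reasoning gives the single-endpoint fact in the even case. For its odd case I would prove vanishing through Combinatorial Nullstellensatz \cite{combnull}: the monomial prescribes lists of sizes $1$ on $w_1$, $2$ on $w_p$, $3$ on $u$ and the interior, and choosing all lists inside $\{1,2,3\}$ with $L_{w_1}=\{1\}$, $L_{w_p}=\{2,3\}$ admits no proper colouring, since each such colouring is a proper $3$-colouring and would force $\mathcal C(w_1)=\mathcal C(w_p)$.

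For the $u^4$ monomials I would delete $e=v_lv_{l+1}$. Then $u$ is a cutvertex of $G-e$ splitting it into fans $G_1$ on $u,v_1,\dots,v_l$ and $G_2$ on $u,v_{l+1},\dots,v_{2k-1}$, and as they share only $u$ and no edge, $P(G-e)=P(G_1)P(G_2)$. Writing $T_l=u^4v_1^0v_{2k-1}^0v_l^1\prod v_i^2$ and using $P(G)=\pm P(G-e)(v_l-v_{l+1})$, the coefficient of $T_l$ is the coefficient of $T_l/v_l$ minus that of $T_l/v_{l+1}$ in $P(G-e)$; by the product structure and the homogeneity of each factor the $u$-exponent splits uniquely, so $T_l/v_l$ contributes (both-endpoints coefficient of $G_1)\cdot$(Lemma~\ref{lem:012} coefficient of $G_2)$, while $T_l/v_{l+1}$ contributes (single-endpoint coefficient of $G_1)\cdot$(single-endpoint coefficient of $G_2)$. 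Exactly one of $G_1,G_2$ is odd and the other even, by the parity of $l$. For odd $l$, $G_1$ is odd so its both-endpoints coefficient is $\pm1$ and its single-endpoint coefficient is $0$, while $G_2$ is even and Lemma~\ref{lem:012} gives $\pm1$; hence $T_l$ has coefficient $\pm1$. For even $l$, $G_1$ is even so its both-endpoints coefficient is $0$, and $G_2$ is odd so its single-endpoint coefficient is $0$; both contributions vanish and so does $T_l$.

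The delicate points are the two \emph{vanishing} inputs. Even both-endpoints vanishing rests only on Lemma~\ref{lem:signsymmetry}, but odd single-endpoint vanishing is the necessity direction of Theorem~\ref{thm:outer}, which must be argued directly via Combinatorial Nullstellensatz as above; this is the main obstacle, since Theorem~\ref{thm:outer} is stated as an existence result and I am using its converse. A secondary nuisance is the degenerate splits at the ends of the range---$l=2k-2$ makes $G_2$ a single vertex, and small $k$ (e.g.\ $k=2$) leaves no room---which I would dispatch by the sign-preserving reversal symmetry of the odd fan (the $(-1)^{p-1}=1$ analogue of Lemma~\ref{lem:signsymmetry}, giving $\eta_l=\eta_{2k-l}$) or by direct computation. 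Finally, the normalisation $|\eta|=|\eta_l|=1$ is inherited from \cite[Theorem~2.1]{GT2021}, since every relevant coefficient is a product of unit coefficients.
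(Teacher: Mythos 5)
Your proposal is correct and takes essentially the same route as the paper: the $u^3$ monomial is obtained by deleting $v_1$ and applying Theorem~\ref{thm:outer} to the resulting even fan, and the $u^4$ dichotomy is obtained by deleting $v_lv_{l+1}$, splitting at the cutvertex $u$, and playing the parities of the two component fans against each other (non-vanishing via Lemma~\ref{lem:012} together with the odd both-endpoints fact, vanishing via the two parity-dependent vanishing facts). Your two refinements --- deriving the even both-endpoints vanishing from the sign symmetry of Lemma~\ref{lem:signsymmetry} rather than from Lemma~\ref{lem:no_030_even}, and justifying the odd single-endpoint vanishing by an explicit Combinatorial Nullstellensatz argument (the necessity direction of Theorem~\ref{thm:outer}, which the paper invokes without comment) --- are sound and, if anything, make the argument more self-contained than the paper's own.
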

\begin{proof}
	Observe at first, that the condition that there is an odd number of vertices $v_i$ is equivalent to the condition that for every 3-colouring $\mathcal{C}$ of $G$ we have $\mathcal{C}(v_1)=\mathcal{C}(v_{2k-1})$. For $k=2$, a straightforward calculation shows that $P(G)$ contains a non-vanishing monomial $-u^3v_2^2$ (for the vertex ordering $u<v_1<v_2<v_3$), while $\eta u^4v_2^1$ is not feasible as $deg_G(u)=3$. Suppose $k>2$ and consider $G_1=G-v_1$. In every 3-colouring $\mathcal{C}_1$ of $G_1$ there is $\mathcal{C}_1(v_2) \neq \mathcal{C}_1(v_{2k-1})$, therefore by Theorem~\ref{thm:outer} in $P(G_1)$ there is a non-vanishing monomial $\eta u^2v_2^1v_{2k-1}^0 \Pi v_i^2$, which implies that $\eta u^3v_1^0v_2^2v_{2k-1}^0 \Pi v_i^2$ does not vanish in $P(G)$.
	
	Now let $G_2=G-\lbrace v_1,v_{2k-1} \rbrace$. Notice, that $G_2$ also satisfies conditions of the theorem, in particular that for every 3-colouring $\mathcal{C}_2$ of $G_2$ there is $\mathcal{C}_2(v_2) = \mathcal{C}_2(v_{2k-2})$. If there was a non-vanishing monomial $\eta u^4v_1^0v_2^1v_{2k-1}^0\Pi v_i^2$ in $P(G)$, then in $P(G_2)$ there would be a non-vanishing monomial $\eta u^2v_2^0v_{2k-2}^1\Pi v_i^2$, which contradicts Theorem~\ref{thm:outer}. Similarly, we rule out $\eta u^4v_1^0v_{2k-2}^1v_{2k-1}^0\Pi v_i^2$ from $P(G)$.
	
	Finally, fix $l \in \lbrace 2,\dots, 2k-3 \rbrace$. Observe that either $\mathcal{C}(v_1)=\mathcal{C}(v_l)$ and $\mathcal{C}(v_{l+1}) \neq \mathcal{C}(v_k)$ if $l$ is odd, or $\mathcal{C}(v_1) \neq \mathcal{C}(v_l)$ and $\mathcal{C}(v_{l+1}) = \mathcal{C}(v_k)$ if $l$ is even. Without loss of generality we can assume that $l$ is odd. Take $G-v_lv_{l+1}$ and split it into $G_1$ and $G_2$ on the cutvertex $u$. We have already established the $P(G_1)$ contains a non-vanishing monomial $\eta_1 u^3v_1^0v_l^0\Pi v_i^2$, and by Lemma~\ref{lem:012} $P(G_2)$ contains a non-vanishing monomial $\eta_2 u^1v_k^0\Pi v_i^2$, that we combine to obtain $\eta u^4v_1^0v_{2k-1}^0v_l^0\Pi v_i^2$. After we add back the edge $v_lv_{l+1}$, we have $\eta u^4v_1^0v_{2k-1}^0v_l^1\Pi v_i^2$ in $P(G)$. Had that monomial vanished in $P(G)$, $P(G-v_lv_{l+1})$ would have to contain a monomial $\eta u^4v_1^0v_{2k-1}^0v_l^1 v_{l+1}^1\Pi v_i^2$, but that would require a monomial $\eta_1' u^2v_1^0v_l^1\Pi v_i^2$ in $P(G_1)$, which is impossible by Theorem~\ref{thm:outer}. Suppose now that $\eta u^4v_1^0v_{2k-1}^0v_{l+1}^1\Pi v_i^2$ does not vanish in $P(G)$. However, that would require existence of either $\eta_1' u^2v_1^0v_l^1\Pi v_i^2$ in $P(G_1)$, which as we already know vanishes there, or $\eta_2' u^3v_1^0v_{2k-1}^0\Pi v_i^2$ in $P(G_2)$, which is contradicted by Lemma~\ref{lem:no_030_even}. Again, $|\eta|=|\eta_l|=1$ is a consequence of \cite[Theorem 2.1]{GT2021}.
\end{proof}
Theorem~\ref{thm:outer} guarantees that outerplanar near-triangulations with universal vertex of even order are ($2,2$)-$xy$-extendable (but not ($1,2$)-$xy$-extendable) with respect to the pair of degree 2 vertices. Theorem 2.1 from \cite{GT2021} shows even more, that one of the (degree 3) neighbours of either $x$ or $y$ also may be given a list of length 2. The following corollary shows that the possibility of such deficiency may be extended to a wider group of vertices.
\begin{corollary}\label{cor:all_121_odd}
	Let $G$ be a 2-connected outerplanar near-triangulation such that $$V(G)=\lbrace u,v_1,\dots,v_{2k-1} \rbrace, k>1,$$
	$$N_G(u)=\lbrace v_1,\dots,v_{2k-1} \rbrace,$$
	$$deg_G(v_1)=2=deg_G(v_{2k-1})$$
	and vertices $v_i$ are ordered counter-clockwise. Given $l$, $2 \le l <2k-1$, a monomial $\eta_l u^2v_1^1v_{2k-1}^1v_l^1\Pi v_i^2$ does not vanish if and only if~~$l$ is even. Moreover, if $\eta_l \neq 0$, then $|\eta_l|=1$.
\end{corollary}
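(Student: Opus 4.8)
The plan is to localise the single interior deficiency by splitting the fan at the universal vertex, exactly as in the proofs of Lemmas~\ref{lem:all_040_even} and~\ref{lem:odd_wheels}. For a polynomial $Q$ and a monomial $m$, write $[m]Q$ for the coefficient of $m$ in $Q$. Fix $l$ with $2\le l<2k-1$ and delete the edge $v_lv_{l+1}$. In $G-v_lv_{l+1}$ the vertex $u$ is a cutvertex separating the graph into two outerplanar near-triangulations with universal vertex $u$: the fan $G_1$ on the outer path $v_1,\dots,v_l$ (of order $l$) and the fan $G_2$ on $v_{l+1},\dots,v_{2k-1}$ (of order $2k-1-l$); by Lemma~\ref{lem:l2} their $u$-exponents combine additively. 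Writing $M=u^2v_1^1v_{2k-1}^1v_l^1\Pi v_i^2$ and $P(G)=\pm(v_l-v_{l+1})P(G-v_lv_{l+1})$, the coefficient $[M]P(G)$ equals $\pm$ the difference of the coefficients of $M/v_l$ and $M/v_{l+1}$ in $P(G-v_lv_{l+1})$. Comparing total degrees in each fan forces the $u$-exponent split to be unique, yielding
\[
[M]P(G)=\pm\big(A_1A_2-B_1B_2\big),
\]
where $A_1=[u^2v_1^1v_l^0\Pi v_i^2]P(G_1)$ and $A_2=[u^0v_{l+1}^2\cdots v_{2k-2}^2v_{2k-1}^1]P(G_2)$ arise from the $v_l$-term of the edge factor, and $B_1=[u^1v_1^1v_l^1\Pi v_i^2]P(G_1)$, $B_2=[u^1v_{l+1}^1v_{l+2}^2\cdots v_{2k-2}^2v_{2k-1}^1]P(G_2)$ arise from the $v_{l+1}$-term.

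The factor $A_2$ is always non-vanishing with $|A_2|=1$: the $u^0$ part of $P(G_2)$ equals $\prod_i(-v_i)$ times the polynomial of the path $v_{l+1}\cdots v_{2k-1}$, and the target monomial corresponds there to the unique acyclic orientation having $v_{2k-1}$ as its only sink. For $A_1$ I would apply Theorem~\ref{thm:outer} to $G_1$ with designated pair $v_1,v_l$: every proper $3$-colouring of $G_1$ forces $\mathcal{C}(v_1)=\mathcal{C}(v_l)$ precisely when $l$ is odd, so a monomial with $\beta+\gamma=1$ (our $v_1^1v_l^0$) exists exactly when $l$ is even, and homogeneity then pins the remaining exponents to $u^2$ and interior $2$; the reversal symmetry of Lemma~\ref{lem:signsymmetry} upgrades the abstract $\beta+\gamma=1$ monomial to the specific orientation $v_1^1v_l^0$. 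Conversely, for odd $l$ a non-vanishing $A_1$ would, by the Combinatorial Nullstellensatz, colour $G_1$ from lists of sizes $3,2,1$ on $u,v_1,v_l$ and $3$ elsewhere even when $L(v_l)=\{c\}$ with $c\notin L(v_1)$, contradicting $\mathcal{C}(v_1)=\mathcal{C}(v_l)$. Hence $A_1\ne0$ iff $l$ is even, with $|A_1|=1$.

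It remains to control $B_1$ and $B_2$, which are two instances of one quantity: for a fan $F$ of order $m$ with degree-$2$ endpoints $w_1,w_m$ set $f(m)=[u^1w_1^1w_m^1\Pi w_i^2]P(F)$. The crux — and the step I expect to be the main obstacle, since $u$ is itself deficient here and so the configuration is not covered by the existence statement of Theorem~\ref{thm:outer} — is the claim that $f(m)=0$ whenever $m$ is even. I would prove this by induction on $m$ by peeling off the endpoint $w_m$: extracting $w_m^1$ from $(u-w_m)(w_{m-1}-w_m)$ gives the recursion $f(m)=\mp\big(g(m-1)+f(m-1)\big)$, where $g(m-1)=[u^0w_1^1w_2^2\cdots w_{m-1}^2]P(F')$ is the staircase coefficient of the previous paragraph (always $\pm1$). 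With base case $f(2)=0$ (the monomial $u^1w_1^1w_2^1$ vanishes in a triangle, as already used in Lemma~\ref{lem:final}), a careful tracking of the orientation-induced signs — organised through Lemma~\ref{lem:signsymmetry} — shows that the staircase term and the recursive term cancel exactly when $m$ is even and reinforce when $m$ is odd.

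Finally I would combine by parity. Since $l+(2k-1-l)=2k-1$ is odd, exactly one of $G_1,G_2$ has even order. If $l$ is even then $G_1$ has even order, so $B_1=f(l)=0$ while $A_1A_2=\pm1$, whence $[M]P(G)=\pm1\ne0$. If $l$ is odd then $A_1=0$ and $G_2$ has even order, so $B_2=f(2k-1-l)=0$; thus $A_1A_2=B_1B_2=0$ and $[M]P(G)=0$. Therefore $M$ does not vanish if and only if $l$ is even, and $|\eta_l|=1$ whenever it is non-vanishing follows from $|A_1|=|A_2|=1$, in agreement with \cite[Theorem 2.1]{GT2021}.
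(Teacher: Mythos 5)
Your proposal is correct, but it takes a genuinely different route from the paper's. The paper proves this corollary in a few lines by \emph{extension} rather than decomposition: it adjoins two new degree-2 vertices $v_0$ (joined to $u,v_1$) and $v_{2k}$ (joined to $u,v_{2k-1}$), producing a larger fan $G'$ with an odd number of outer vertices, applies Lemma~\ref{lem:odd_wheels} to $G'$ (with all index parities shifted by one), and then pulls back: any monomial of $P(G')$ containing $v_0^0v_{2k}^0$ must take the terms $u\cdot(-v_1)$ and $u\cdot(-v_{2k-1})$ from the four added edge factors, so its coefficient is $\pm$ the coefficient of $u^2v_1^1v_{2k-1}^1v_l^1\Pi v_i^2$ in $P(G)$, and the equivalence drops out at once. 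You instead rerun the edge-deletion/cutvertex-splitting machinery of Lemmas~\ref{lem:all_040_even} and~\ref{lem:odd_wheels} directly on the target monomial, which forces you to control the doubly deficient coefficient $f(m)=[u^1w_1^1w_m^1\Pi w_i^2]P(F_m)$ of a fan --- a quantity the paper never needs and which neither Theorem~\ref{thm:outer} nor homogeneity can handle. Your treatment of it is sound: with a single fixed orientation the recursion is $f(m)=-\bigl(g(m-1)+f(m-1)\bigr)$ with staircase coefficient $g(j)=-1$ for every $j$, hence $f(m)=1-f(m-1)$, and with $f(2)=0$ this gives $f(m)=0$ exactly for even $m$; then your case analysis closes correctly ($l$ even: $B_1=f(l)=0$, so $[M]P(G)=\pm A_1A_2=\pm1$; $l$ odd: $A_1=0$ and $B_2=f(2k-1-l)=0$). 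What each approach buys: the paper's reduction is shorter and recycles the hard work already invested in Lemma~\ref{lem:odd_wheels}, while yours is self-contained at the level of the splitting technique and records a new parity fact about fan polynomials that could be independently useful. Two points you leave schematic deserve pinning down: the sign bookkeeping in the recursion, which you assert rather than carry out (it does work, but only after fixing one orientation convention throughout), and the Combinatorial Nullstellensatz argument killing $A_1$ for odd $l$, which requires all lists to be drawn from a fixed palette of three colours --- otherwise the resulting list colouring is not a $3$-colouring and contradicts nothing; this is the same implicit convention the paper relies on whenever it uses Theorem~\ref{thm:outer} to rule monomials \emph{out}.
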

\begin{proof}
	Let $G'$ be a graph constructed by adjoining $v_0$ to $u$ and $v_1$, and $v_{2k}$ to $u$ and $v_{2k-1}$. We notice that $\mathcal{C}'(v_0)=\mathcal{C}'(v_{2k})$ for every 3-colouring $\mathcal{C}'$ of $G'$, therefore Lemma~\ref{lem:odd_wheels} applies to $G'$. However, as the labelling of vertices $v_i$ in $G'$ starts from $v_0$, the parities of indexes of the vertices generating respective vanishing and non-vanishing monomials reverse in $P(G')$. Hence in $P(G')$ monomials $\eta_l u^2v_0^1v_{2k}^1v_l^1\Pi v_i^2$ do not vanish for every even $2 \le l < 2k$, and vanish for every odd $1 \le l < 2k+1$. We then remove $v_0$ and $v_{2k}$ from $G'$ to return to $G$ to justify the assertion.
\end{proof}

\section{Path 3-extendability and generalized wheels}\label{sec:wheels}

We are now in the right spot to define generalized wheels. As the term "generalized" may already suggest, this is not a single type of structure, but more an umbrella term. We will therefore define and investigate the components individually one by one, matching the definitions in~\cite{ThomassenExt}.
\begin{definition}
	An \emph{ordinary wheel} is a graph consisting of $k$-cycle on vertices $v_1,\dots ,v_k$, and a vertex $v$ neighboring all the vertices on the cycle. The path $\overrightarrow{v_kv_1v_2}$ will be called a \emph{principal path of $G$}, $v_1v_2$ and $v_kv_1$ will be its \emph{principal edges}, and $v_1,v_2,v_k$ will be its \emph{principal vertices}.
\end{definition}
Note that obviously any path of length 3 of an ordinary wheel may play the role of the principal path. The word "ordinary" is used here only to clearly distinguish that type of generalized wheel, as such a graph is usually referred to simply as \emph{wheel}, like in \cite{ThomassenExt}.

As evident from Theorem~\ref{thm:thomassenExt}, generalized wheels are crucial when looking for 3-path extendability in planar graphs. In this section, we will show that generalized wheels themselves are not 3-path-extendable. Before proceeding with subsequent definitions, we will prove a fact about 3-path extendability of ordinary wheels.
\begin{theorem}\label{thm:even_ok}
	Let $G$ be an ordinary wheel with principal path $v_kv_1v_2$ and inner vertex $v$. Then there is a nonvanishing monomial $\eta v_1^0v_2^0v_k^0v^\alpha \Pi v_i^{\alpha_i}$ in $P(G-\overrightarrow{v_kv_1v_2})$, with $\alpha_i \le 2$ and $\alpha \le 4$, if and only if $k$ is even.
\end{theorem}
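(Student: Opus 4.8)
The plan is to recognise $G-\overrightarrow{v_kv_1v_2}$ as a fan carrying one pendant vertex, and to collapse the whole statement onto a single forced monomial of that fan. Deleting the two principal edges $v_kv_1$ and $v_1v_2$ leaves all spokes $vv_1,\dots,vv_k$ together with the rim path $v_2v_3\cdots v_k$; thus $v_1$ becomes a pendant adjacent only to $v$, while $F:=(G-\overrightarrow{v_kv_1v_2})-v_1$ is the fan on $v,v_2,\dots,v_k$, i.e. a $2$-connected outerplanar near-triangulation with universal vertex $v$ whose two degree-$2$ vertices are exactly $v_2$ and $v_k$. Since $vv_1$ is the only edge at $v_1$,
$$P(G-\overrightarrow{v_kv_1v_2})=(v-v_1)\,P(F),$$
so every monomial carrying $v_1^0$ comes from the summand $v\,P(F)$. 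Hence the sought $\eta v_1^0v_2^0v_k^0v^{\alpha}\prod v_i^{\alpha_i}$ with $\alpha\le4,\ \alpha_i\le2$ exists if and only if $P(F)$ contains $\eta v_2^0v_k^0v^{\alpha-1}\prod v_i^{\alpha_i}$ with $\alpha-1\le3$ and $\alpha_i\le2$. As $P(F)$ is homogeneous of degree $|E(F)|=2k-3$ and $F$ has precisely $k-3$ rim vertices other than $v_2,v_k$, a degree count forces $\alpha-1=3$ and every remaining $\alpha_i=2$: the unique candidate is
$$M_F:=v^3v_2^0v_k^0\prod_{i=3}^{k-1}v_i^2.$$
The theorem therefore reduces to showing that $M_F$ is non-vanishing in $P(F)$ exactly when $F$ has an odd number $k-1$ of rim vertices, that is, when $k$ is even.

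For the ``if'' direction, suppose $k$ is even, so $F$ has an odd number of rim vertices with $v_2,v_k$ the two vertices of degree $2$. Then Lemma~\ref{lem:odd_wheels} applies verbatim and gives that $M_F$ is non-vanishing. Multiplying by the factor $v$ coming from the pendant edge $vv_1$ produces the non-vanishing monomial $\eta v_1^0v_2^0v_k^0v^4\prod_{i=3}^{k-1}v_i^2$ in $P(G-\overrightarrow{v_kv_1v_2})$, with $\alpha=4\le4$ and $\alpha_i=2\le2$, as required.

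For the ``only if'' direction, suppose $k$ is odd, so $F$ has an even number of rim vertices, and I must show $M_F$ vanishes. Lemma~\ref{lem:even_wheels} is stated for the pair consisting of one degree-$2$ vertex and the \emph{neighbour} of the other degree-$2$ vertex, so it does not apply to $M_F$, in which both degree-$2$ vertices $v_2,v_k$ carry exponent $0$. Instead I delete the degree-$2$ vertex $v_2$: its only neighbours are $v$ and $v_3$, so the factor $(v-v_2)(v_2-v_3)$ contributes $-v\,v_3$ to any monomial with $v_2^0$, and a non-vanishing $M_F$ would force $P(F-v_2)$ to contain $v^2v_3^1v_k^0\prod_{i=4}^{k-1}v_i^2$. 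Now $F-v_2$ is the fan on $v,v_3,\dots,v_k$; since $k$ is odd it has an odd number of rim vertices, so every proper $3$-colouring satisfies $\mathcal{C}(v_3)=\mathcal{C}(v_k)$. Exactly as in the proof of Lemma~\ref{lem:odd_wheels}, a monomial pairing the forced-equal vertices $v_3,v_k$ with exponents $1$ and $0$ contradicts Theorem~\ref{thm:outer}; hence no such monomial exists and $M_F$ vanishes.

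The main obstacle is precisely this ``only if'' step: the required vanishing concerns the monomial with \emph{both} degree-$2$ rim vertices at exponent $0$, a configuration not literally catalogued in Lemma~\ref{lem:even_wheels}. The idea that resolves it is that deleting one endpoint converts the two troublesome vertices into the endpoints of an odd fan, where the colouring constraint $\mathcal{C}(v_3)=\mathcal{C}(v_k)$ supplied by Theorem~\ref{thm:outer} rules out the reduced exponent pattern. One should also dispatch the degenerate base case $k=3$ directly (the wheel is $K_4$ and $F$ is the triangle $vv_2v_3$, where $M_F=v^3v_2^0v_3^0$ plainly vanishes), since there the two endpoints of $F-v_2$ coincide and the reduction above becomes vacuous; this matches the parity claim, $k=3$ being odd.
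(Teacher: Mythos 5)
Your proof is correct, and its skeleton is the same as the paper's: factor $P(G-\overrightarrow{v_kv_1v_2})=(v-v_1)P(F)$ where $F=G-v_1$ is the fan, get the even case from Lemma~\ref{lem:odd_wheels}, and argue vanishing in the odd case. But your handling of the ``only if'' direction is a genuine and valuable deviation. The paper simply cites Lemma~\ref{lem:no_030_even} for the vanishing of $v^3v_2^0v_k^0\Pi v_i^2$ in $P(G-v_1)$, reading that lemma as a statement about the monomial with \emph{both} degree-$2$ vertices at exponent $0$. As you observed, that is not what Lemma~\ref{lem:no_030_even} (and hence Lemma~\ref{lem:even_wheels}) literally says: as printed it concerns $u^3v_1^0v_{2k-1}^0\Pi v_i^2$, i.e.\ one degree-$2$ vertex paired with the \emph{neighbour} of the other one --- and in that literal form the claim is actually false (for the fan on $u,v_1,v_2,v_3,v_4$ the monomial $u^3v_1^0v_2^2v_3^0v_4^2$ survives with coefficient $-1$; the index $v_{2k-1}$ is evidently a typo for $v_{2k}$, which is also the only reading under which the lemma's own proof, using the forced equality $\mathcal{C}(v_2)=\mathcal{C}(v_{2k})$ after deleting $v_1$, makes sense). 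Your inline replacement --- delete $v_2$, note the forced contribution $-v\,v_3$ from $(v-v_2)(v_2-v_3)$, and rule out $v^2v_3^1v_k^0\Pi v_i^2$ in the odd fan $F-v_2$ because Theorem~\ref{thm:outer} (via Combinatorial Nullstellensatz) forbids the exponent pattern $(1,0)$ on the forced-equal pair $v_3,v_k$ --- is precisely the proof of the corrected lemma, so your write-up is self-contained exactly where the paper's citation is shaky. Two further small improvements: your homogeneity count pinning down $M_F=v^3v_2^0v_k^0\prod_{i=3}^{k-1}v_i^2$ as the \emph{only} candidate makes the paper's extra appeal to Theorem~\ref{thm:outer} for exponent patterns with $\alpha\le 2$ superfluous, and your explicit check of the degenerate case $k=3$ (where $F-v_2$ collapses and Lemma~\ref{lem:no_030_even} does not apply, as it needs at least four rim vertices) closes a base case the paper passes over silently.
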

\begin{proof}
	As
	$$P(G-\overrightarrow{v_kv_1v_2})=P(G-v_1)(v-v_1),$$
	we can equivalently show that in $P(G_1-v_1)$ there is a non-vanishing monomial $\eta v^\alpha v_2^0v_k^0\Pi v_i^{\alpha_i}$ with $\alpha_i \le 2$ and $\alpha \le 3$. If $k$ is even, then $G-v_1$ is an outerplanar near-triangulation with even number of vertices and universal vertex $v$. Hence, by Lemma~\ref{lem:odd_wheels}, $P(G-v_1)$ contains a non-vanishing monomial $\eta v^3v_2^0v_k^0\Pi v_i^2$. Conversely, if $k$ is odd, then $G-v_1$ is an outerplanar near-triangulation with odd number of vertices and universal vertex $v$. By Lemma~\ref{lem:no_030_even}, the monomial $v^3v_2^0v_k^0\Pi v_i^2$ vanishes in $P(G-v_1)$, as do any monomial of the form $v^\alpha v_2^0v_k^0\Pi v_i^{\alpha_i}$ with $\alpha, \alpha_i \le 2$,  by Theorem~\ref{thm:outer}. It is therefore impossible for $P(G-\overrightarrow{v_kv_1v_2})$ to contain the monomial from the assertion.
\end{proof}	
Consequently, we see that not all ordinary wheels are problematic --- only the ones that have an odd number of vertices on the outer cycle. We will call such graphs \emph{odd wheels}, while \emph{even wheels} will denote ordinary wheels with the outer cycle of even length.

\begin{figure}[htb]
	\begin{center}
		\begin{minipage}{0.45\linewidth}
			\includegraphics[width = 0.9\textwidth]{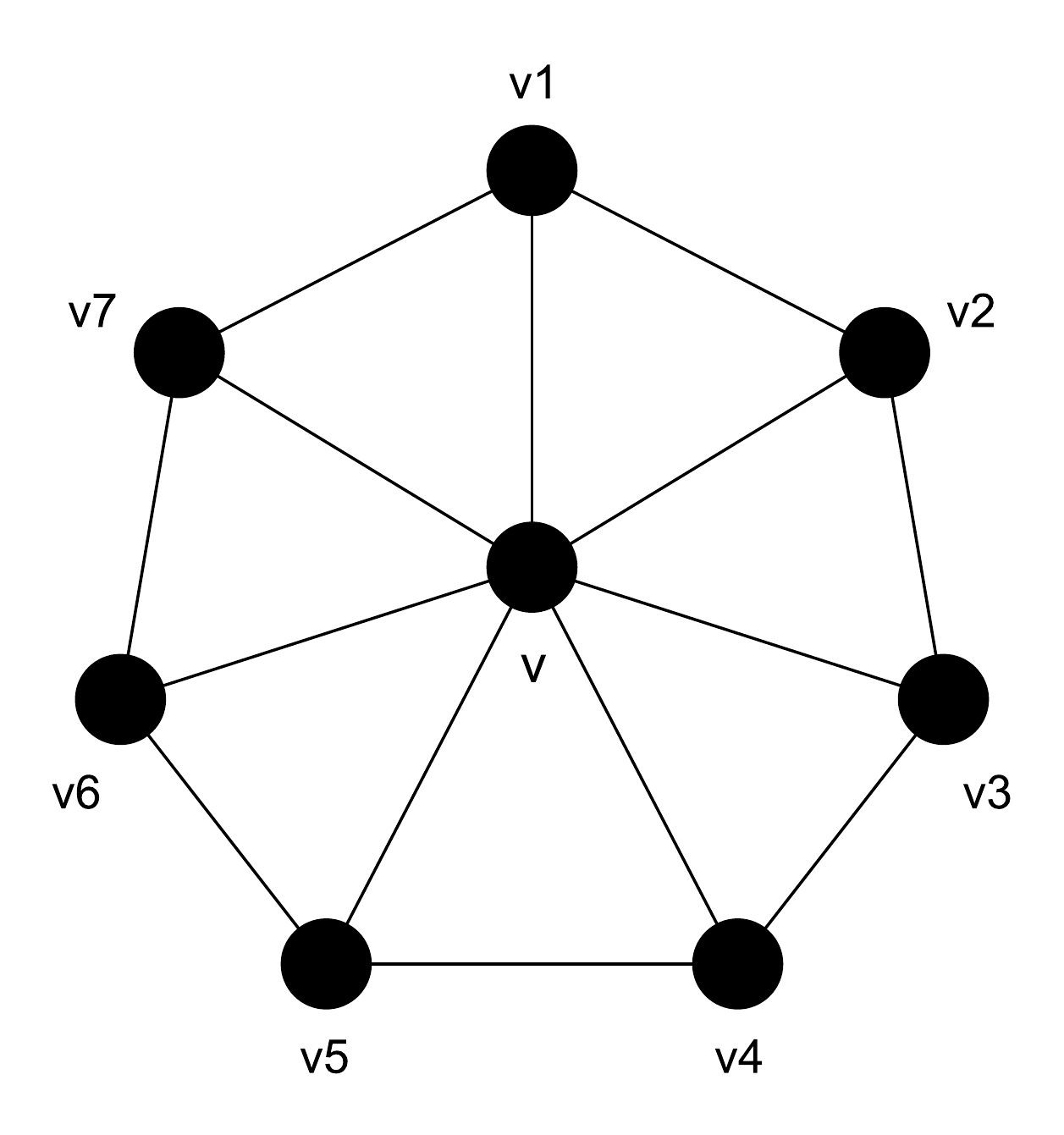}
		\end{minipage}
		\begin{minipage}{0.45\linewidth}
			\includegraphics[width = 0.9\textwidth]{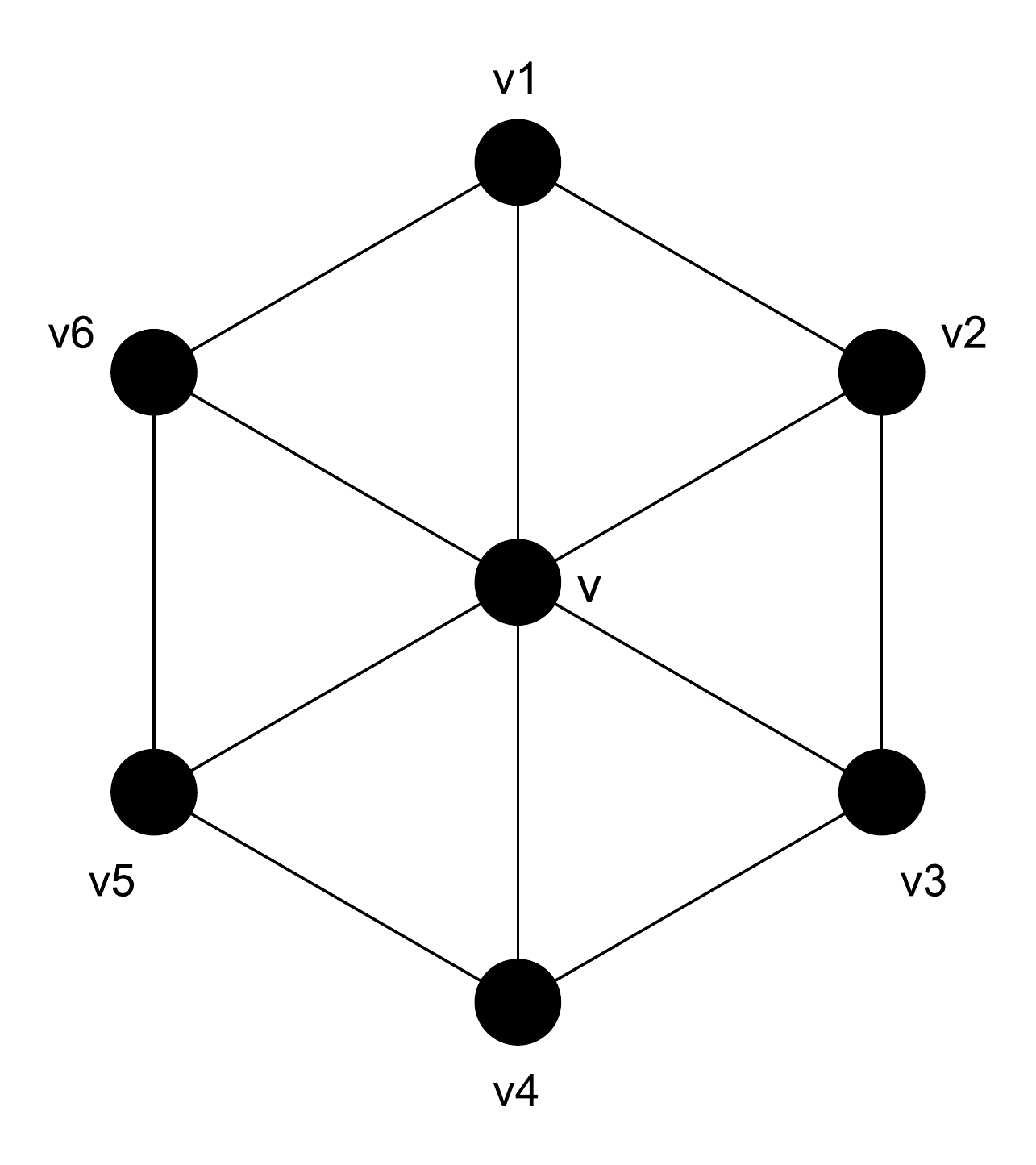}
		\end{minipage}
	\end{center}
	\caption{Examples of ordinary wheels. Left - an odd wheel, right - an even wheel.} \label{fig:odd_even}
\end{figure}

Taking a look on the assertion of Theorem~\ref{thm:even_ok}, one can note the particular approach to the polynomial understanding of 3-path extendability --- namely the exclusion of the principal edges and requiring the exponent equal to 0 on all vertices of the principal path. This may be understood as assigning the list of length 1 to these vertices with the restriction that lists of neighboring vertices cannot be identical. It is also immediate that the existence of the monomial of the form $Av_1^0v_2^0v_k^0$ in $P(G-\overrightarrow{v_kv_1v_2})$ directly implies the existence of all of the following:
\begin{itemize}
	\item $Av_1^0v_2^1v_k^0$ in $P(G-\overrightarrow{v_kv_1})$;
	\item $Av_1^0v_2^0v_k^1$ in $P(G-\overrightarrow{v_1v_2})$;
	\item $Av_1^0v_2^1v_k^1$ in $P(G)$.
\end{itemize}
In fact, these are all equivalent.

With all this cleared, we will continue the task of defining generalized wheels.
\begin{definition}
	A \emph{broken wheel} is a graph consisting of the path $v_2 \dots v_k$ and a vertex $v_1$ neighboring all the vertices $v_2,\dots, v_k$. The path $\overrightarrow{v_kv_1v_2}$ will be called a \emph{principal path of $G$}, $v_1v_2$ and $v_kv_1$ will be its \emph{principal edges}, and $v_1,v_2,v_k$ will be its \emph{principal vertices}.
\end{definition}
We can therefore see, that broken wheels and outerplanar near-triangulations with universal vertex are \emph{the same exact class of graphs}, which brings the much needed context to the contents of Section~\ref{sec:sec3}. An example of such broken wheel can be therefore seen on Figure~\ref{fig:univ_vertex}. Observe also that these wheels are called "broken", because such a graph may arise from deletion of a single edge from the outer cycle of an ordinary wheel. Contrary to the case of ordinary wheels, the principal path of a broken wheel is in most cases unique. We will now show that no broken wheel can be 3-path extendable.
\begin{theorem}
	Let $G$ be a 2-connected outerplanar near-triangulation on $k$ vertices, and let $v_k, v_1, v_2$ form a path on the outer cycle of $G$. Then in $P(G - \overrightarrow{v_kv_1v_2})$ there is no monomial $v_1^0v_2^0v_k^0\Pi v_i^{\alpha_i}$ with $\alpha_i \le 2$.
\end{theorem}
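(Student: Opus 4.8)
The plan is to prove this by a pure homogeneity-and-degree count, exploiting the fact that a broken wheel (equivalently, a $2$-connected outerplanar near-triangulation) has no interior vertex able to absorb a large exponent. The engine of the argument is that $P(H)=\prod_{uv\in E(H)}(u-v)$ is homogeneous of degree $|E(H)|$, since every factor is linear and deleting edges only removes factors. Consequently every monomial that actually occurs in $P(H)$ has total degree exactly $|E(H)|$, so it suffices to show that the candidate monomials all have strictly smaller total degree.

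First I would record the edge count. A $2$-connected outerplanar near-triangulation on $k$ vertices is a triangulated polygon, hence a maximal outerplanar graph, and such a graph has exactly $2k-3$ edges. Since $\overrightarrow{v_kv_1v_2}$ lies on the outer cycle, its two edges $v_kv_1$ and $v_1v_2$ are genuine edges of $G$, and for $k\ge 3$ they are distinct. Writing $H=G-\overrightarrow{v_kv_1v_2}$, we therefore get $|E(H)|=(2k-3)-2=2k-5$, so $P(H)$ is homogeneous of degree $2k-5$.

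Next I would bound the total degree of any candidate monomial. A monomial $v_1^0v_2^0v_k^0\prod_i v_i^{\alpha_i}$ keeps the three principal variables at exponent $0$, while the product runs over the remaining $k-3$ vertices $v_3,\dots,v_{k-1}$, each constrained by $\alpha_i\le 2$. Its total degree is thus at most $2(k-3)=2k-6$. Since $2k-6<2k-5$ and $P(H)$ is homogeneous of degree $2k-5$, no such monomial can occur, which is exactly the assertion.

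I expect no genuinely hard step here; the only points needing care are the edge count and the boundary cases. One should confirm that the two principal edges are present and distinct for every $k\ge 3$, including the degenerate triangle $k=3$, where $H$ is a single edge (degree $2k-5=1$) and the empty product forces total degree $0<1$. Conceptually, the deficit of exactly $1$ in the degree budget is the whole content of the theorem: unlike an even ordinary wheel (cf. Theorem~\ref{thm:even_ok}), where the interior vertex may carry exponent up to $4$ and so exactly fills the degree $2k-2$ of that polynomial, a broken wheel has no interior vertex to supply the missing unit of degree, so the required monomial is simply too light to exist.
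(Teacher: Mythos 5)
Your proof is correct and is essentially identical to the paper's own argument: both rest on the observation that $P(G-\overrightarrow{v_kv_1v_2})$ is homogeneous of degree $2k-5$ while any candidate monomial has total degree at most $2(k-3)=2k-6$. The paper states this in one sentence; your write-up merely spells out the same degree count in more detail.
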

\begin{proof}
	It suffices to notice that the sum of exponents in desired monomial would be no larger than $$2(k-3)=2k-6,$$ while $G-\overrightarrow{v_kv_1v_2}$ has $2k-5$ edges.
\end{proof}
\begin{corollary}\label{cor:no_000_broken}
	Let $G$ be a broken wheel with principal path $\overrightarrow{v_kv_1v_2}$. Then in $P(G-\overrightarrow{v_kv_1v_2})$ there is no monomial $\eta v_1^0v_2^0v_k^0 \Pi v_i^{\alpha_i}$, with $\alpha_i \le 2$.
\end{corollary}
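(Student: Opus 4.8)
The plan is to reduce the statement to the theorem immediately preceding it, whose proof is the one-line edge-counting argument. The key observation is that a broken wheel is \emph{precisely} a 2-connected outerplanar near-triangulation with universal vertex $v_1$, exactly as noted in the paragraph before the corollary. First I would verify that the principal path $\overrightarrow{v_kv_1v_2}$ lies on the outer cycle of $G$: since $v_1$ is adjacent to every one of $v_2,\dots,v_k$ and $v_2\dots v_k$ is a path, the outer boundary of the outerplanar drawing is the cycle $v_1v_2\cdots v_kv_1$, on which $v_k,v_1,v_2$ appear consecutively. Thus the hypotheses of the preceding theorem hold verbatim, and the asserted non-existence of the monomial follows at once.

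Alternatively, I would give the self-contained counting argument, which is the more transparent route and makes the corollary fully independent. First I would count the edges of $G$: the path $v_2\dots v_k$ contributes $k-2$ edges and the apex $v_1$ contributes $k-1$ edges, for a total of $2k-3$. Deleting the two principal edges $v_kv_1$ and $v_1v_2$ leaves $G-\overrightarrow{v_kv_1v_2}$ with exactly $2k-5$ edges. Since the graph polynomial is homogeneous of total degree equal to the number of edges, every monomial of $P(G-\overrightarrow{v_kv_1v_2})$ has degree $2k-5$.

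Finally I would compare this with the degree of any putative monomial of the prescribed shape. In $v_1^0v_2^0v_k^0\prod v_i^{\alpha_i}$ the three principal variables carry exponent $0$, so only the $k-3$ remaining vertices $v_3,\dots,v_{k-1}$ may contribute, each with exponent at most $2$; hence the total degree is at most $2(k-3)=2k-6$. As $2k-6<2k-5$, no such monomial can occur, which is the claim. I do not expect any genuine obstacle here: the single point requiring a moment's care is confirming that the broken wheel really falls under the preceding theorem (equivalently, that the two principal edges are exactly the edges whose removal drops the edge count by two), after which the result is immediate.
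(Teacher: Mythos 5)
Your proposal is correct and takes essentially the same route as the paper: the corollary is there deduced immediately from the preceding theorem (broken wheels being exactly the 2-connected outerplanar near-triangulations with universal vertex $v_1$, with the principal path on the outer cycle), and that theorem's proof is precisely your homogeneity/edge-count argument comparing the $2k-5$ edges of $G-\overrightarrow{v_kv_1v_2}$ with the maximal degree $2(k-3)=2k-6$ of the forbidden monomial. Both your reduction and your self-contained count are sound.
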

Te only step left in the quest of defining generalized wheels is the final type, the \emph{multiple wheels}, which as name suggest will be a "compound" type. We will define it iteratively, starting from a \emph{double wheel}.
\begin{definition}
	Let each of $G_1$ and $G_2$ be either an ordinary wheel or a broken wheel. Let $G$ be constructed by identifying vertices $v_1$ together with one of the principal edges from each of these graphs. Graph $G$ is then called a \emph{double wheel}, and $G_1$ and $G_2$ are \emph{components} of $G$. This process can be iterated, and every graph resulting from that process will be called a \emph{multiple wheel}. The vertex that all the vertices $v_1$ from the components were combined into will be labelled $v_1$, and the principal vertices that were not identified with others at any point of the process will be labelled $v_2$ and $v_k$. The path $\overrightarrow{v_kv_1v_2}$ will be called a \emph{principal path of $G$}, $v_1v_2$ and $v_kv_1$ will be its \emph{principal edges}, and $v_1,v_2,v_k$ will be its \emph{principal vertices}.
\end{definition}

\begin{figure}[htb]
	\begin{center}
		\includegraphics[width = 0.75\textwidth]{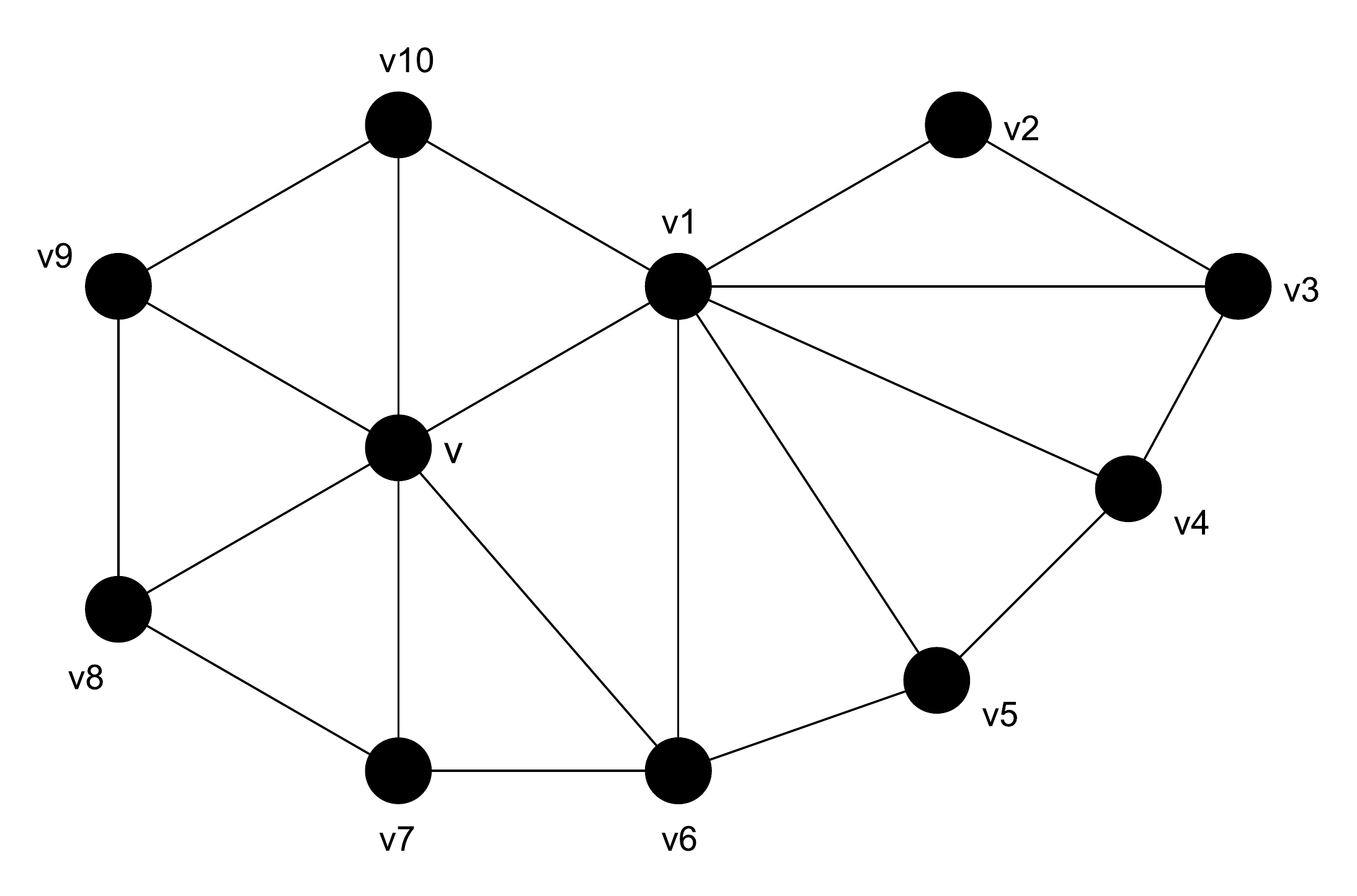}
		
		\includegraphics[width = 0.75\textwidth]{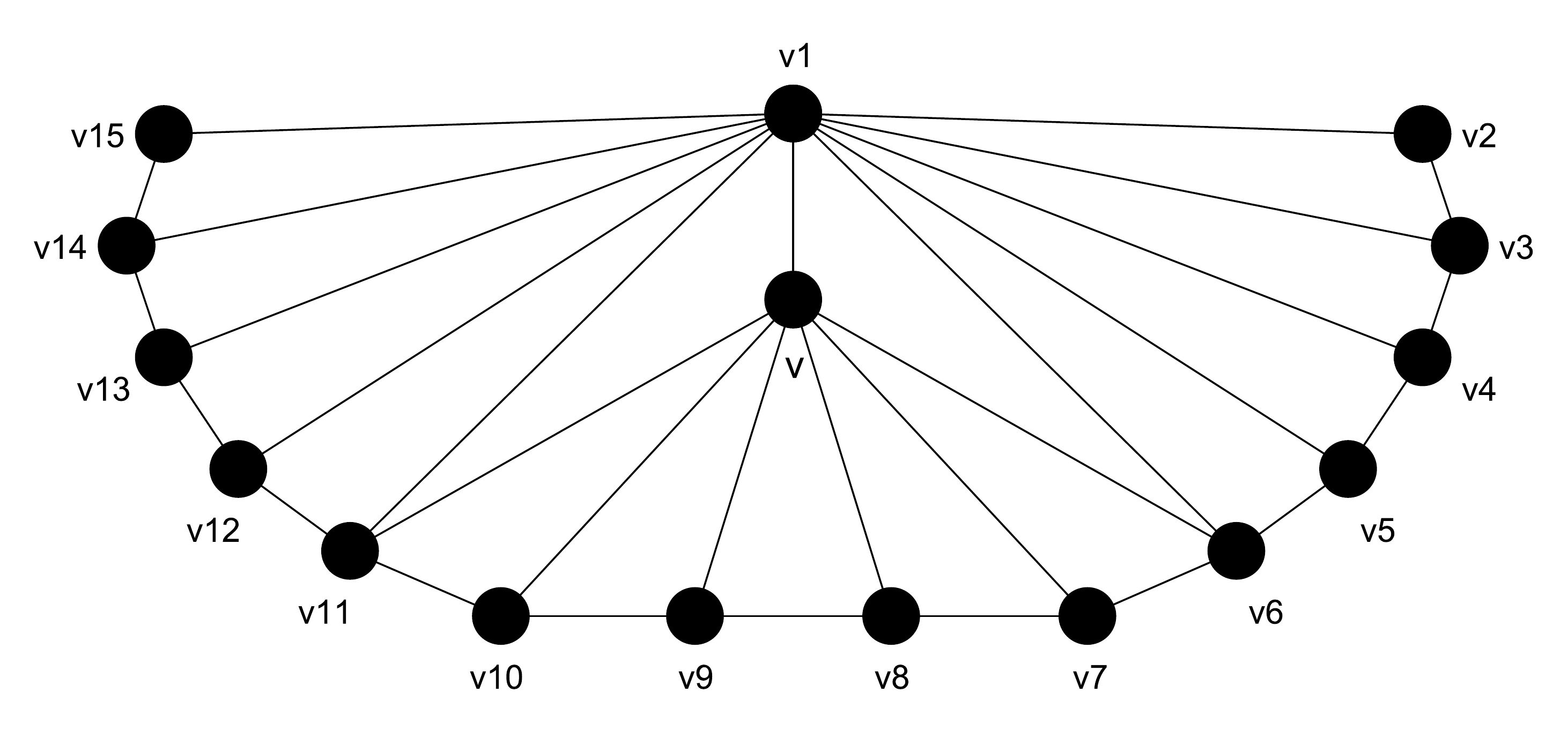}
	\end{center}
	\caption{Examples of multiple wheels constructed from wheels of Figures~\ref{fig:univ_vertex} and~\ref{fig:odd_even}. Top: a double wheel with an even wheel component. Edge $v_1v_6$ is an identification of edges $v_1v_2$ and $v_6v_1$ from the odrinary and broken wheel, respectively. The principal path is $v_{10}v_1v_2$; Bottom: a multiple wheel without an even wheel component. Edges $v_1v_6$ and $v_1v_{11}$ are a result of identifiaction of edges from the components, $v_{15}v_1v_2$ is the principal path.} \label{fig:multiple}
\end{figure}

Looking at the definition of multiple wheels, it appears that it doesn't make much sense to construct multiple wheel solely of broken wheels --- such a multiple wheel will still simply be a broken wheel. We will therefore assume that at least one of the components of the multiple wheel is an ordinary wheel. We are now finally ready to define generalized wheels.
\begin{definition}
	A graph is a \emph{generalized wheel} if it is a broken wheel, an odd wheel or a multiple wheel consisting of broken and odd wheels, without an even wheel component. The notions of principal paths, edges and vertices carry over from the previous definitions.
\end{definition}
The exclusion of even wheels from the definition of generalized wheels may not immediately seem justified --- for example Thomassen in~\cite{ThomassenExt} does not even distinguish between even and odd wheels. However, the generalized wheels are introduced as the "forbidden configurations" for 3-path extendability, and they are intended not to be 3-path extendable. But we have already shown that the even wheels are in fact 3-path extandable, and now we will prove that a multiple wheel with an even wheel component is also 3-path extendable, while a multiple wheel without such component is not.
\begin{theorem}\label{thm:even_multi}
	Let $G$ be a multiple wheel with principal path $\overrightarrow{v_kv_1v_2}$ and an even wheel component $G_0$, where $v_3,\dots ,v_{k-1}$ are the remaining vertices on the outer cycle of $G$ and $u_1,\dots,u_m$ are the vertices of $G$ not on the outer cycle. Then $P(G-\overrightarrow{v_kv_1v_2})$ contains a non-vanishing monomial $\eta v_1^0v_2^0v_k^0\Pi v_i^{\alpha_i}\Pi u_j^{\beta_j}$, with $\alpha_i \le 2$ and $\beta_j \le 4$.
\end{theorem}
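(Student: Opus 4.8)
The plan is to build the required monomial by folding the wheel one component at a time, using in each component a monomial in which the central vertex $v_1$ already has exponent $0$. Recall first that a multiple wheel is a \emph{linear} fan: its components $G_1,\dots,G_t$ all share the single vertex $v_1$, consecutive components $G_i,G_{i+1}$ share exactly one spoke edge $v_1w_i$ (with $w_0=v_2$ and $w_t=v_k$ the two free ends), and deleting the principal path $\overrightarrow{v_kv_1v_2}$ removes precisely the two outermost spokes $v_1v_2$ and $v_1v_k$. I would fold the fan from $G_1$ to $G_t$ with Corollary~\ref{cor:00}, applied at each shared spoke with $x=v_1$ and $y=w_i$ (the spoke $v_1w_i$ lies in both sides, so the hypotheses are met). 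Since $x=v_1$ keeps exponent $0$ and each shared endpoint combines as $\alpha+\beta-1$, the assembled monomial automatically has $v_1^0$ as long as every component contributes a $v_1^0$-monomial, and a shared vertex $w_i$ ends at exponent $\le2$ exactly when the two exponents meeting there sum to at most $3$.

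The engine is the even component $G_0$. By Theorem~\ref{thm:even_ok}, in the equivalent form recorded just after it, $P(G_0)$ contains a monomial with $v_1^0$, with the two cycle-neighbours $a,b$ of $v_1$ (which are the shared endpoints) at exponent $1$, the hub at exponent $\le4$, and every other boundary vertex at exponent $\le2$; thus $G_0$ spends only exponent $1$ at each of its spokes. For the other components I would manufacture the $v_1^0$-monomials as follows. In an ordinary (odd) wheel $v_1$ lies on the cycle, so the $v_1^0$-part of $P(G_i)$ equals $\pm abh\cdot P(G_i-v_1)$, where $a,b$ are the cycle-neighbours and $h$ the hub; as $G_i-v_1$ is an outerplanar near-triangulation with universal hub in the parity that forces $\mathcal{C}(a)\ne\mathcal{C}(b)$, point (ii) of Theorem~\ref{thm:outer} supplies a monomial with endpoints $(0,1)$ and all other vertices (including the hub) at $\le2$, which after the $abh$-boost yields the endpoint profile $(1,2)$ (or $(2,1)$, by the symmetry of Lemma~\ref{lem:signsymmetry}) with hub $\le3$. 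In a broken wheel $v_1$ is the universal vertex, and its $v_1^0$-part equals $\pm\big(\prod_{\text{path}}w\big)\cdot P(\text{path})$; choosing in the path polynomial a monomial with all interior exponents $\le1$ and one end equal to $0$ realises the same $(1,2)$/$(2,1)$ profile with all interior boundary vertices $\le2$. Finally a boundary component has its outer spoke deleted as well; by Corollary~\ref{cor:no_000_broken} and the same equivalence this pins the free end ($v_2$ or $v_k$) at exponent $0$ and forces its shared end to exponent exactly $2$ when the component is non-even (and $1$ when it is even).

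It remains to orient every non-even component so that at each shared endpoint the two incident exponents sum to at most $3$, and this is the step that genuinely needs the even component. Each non-even component carries one ``light'' end (exponent $1$) and one ``heavy'' end (exponent $2$), whereas $G_0$ carries $1$ on both sides. The even components cut the fan into maximal runs of consecutive non-even components; each run abuts, on either side, either the graph boundary (free end $=0$) or an even component (contributing $1$). Orienting all heavy ends of a run toward one side then makes every interior shared vertex sum to $1+2=3$, and the two run-ends sum to at most $1+2$ against an adjoining even component or $0+2$ at the graph boundary, all within budget. The obstruction that this avoids is a run flanked by the boundary on \emph{both} sides: there both extreme components are forced to exponent $2$ at their shared ends, and propagation meets in the middle with the forbidden sum $2+2=4$. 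This is precisely why even-free multiple wheels fail to be $3$-path-extendable, and the hypothesis of at least one even component is exactly what rules such a two-sided run out.

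The main difficulty is therefore bookkeeping rather than a single hard inequality, and it splits in two. First, one must pin down the achievable $v_1^0$-profiles for each component type with hub $\le4$ and interior $\le2$; this rests on point (ii) of Theorem~\ref{thm:outer} for odd wheels, on a short path-polynomial computation for broken wheels, and on Corollary~\ref{cor:no_000_broken} for the boundary cases. Second, one must check that the heavy/light orientation can be chosen globally consistently, which reduces to the run analysis above and is the only place the even component is used. Granting both, folding the fan with Corollary~\ref{cor:00} produces a monomial $\eta v_1^0v_2^0v_k^0\prod v_i^{\alpha_i}\prod u_j^{\beta_j}$ with $\alpha_i\le2$ and $\beta_j\le4$, and non-vanishing is guaranteed directly by Corollary~\ref{cor:00} at each step.
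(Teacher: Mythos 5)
Your proof is correct, but it follows a genuinely different route from the paper's. The paper's argument is much shorter: it deletes all non-principal vertices of the even component $G_0$, notes that in the remaining graph $G'$ the vertex $v_1$ is a cutvertex separating two plane near-triangulations, and obtains a non-vanishing monomial of $P(G'-\overrightarrow{v_kv_1v_2})$ with $v_1^0v_2^0v_k^0$, outer exponents at most $2$ and inner at most $4$, directly from Theorem~\ref{thm:zhu} applied to each piece and glued by Lemma~\ref{lem:l2}; it then multiplies by the monomial of $P(G_0-\overrightarrow{v_qv_1v_p})$ supplied by Theorem~\ref{thm:even_ok} in its \emph{all-zeros} form $\eta'' v_1^0v_p^0v_q^0\Pi v_i^2 u_r^4$. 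Since $G-\overrightarrow{v_kv_1v_2}$ is the edge-disjoint union of these two graphs, its polynomial is the product of theirs, and because the $G_0$-monomial has exponent $0$ at every shared vertex, the product neither overloads the attachment vertices nor cancels. That is exactly the trick your argument replaces with bookkeeping: by using the $(1,1)$ form of the even-wheel monomial instead of the $(0,0,0)$ form, you must track what every component spends at every shared spoke, which is what forces your per-component profile analysis (Theorem~\ref{thm:outer}(ii) with Lemma~\ref{lem:signsymmetry} for odd wheels, path computations for broken wheels, and the boundary variants) and the global heavy/light orientation of runs, glued step by step with Corollary~\ref{cor:00}. Your route avoids Theorem~\ref{thm:zhu} entirely, yields explicit exponent profiles at every shared vertex, and makes transparent where evenness is indispensable (the unresolvable $2+2$ clash in an even-free fan, which matches the paper's converse results); the paper's route buys brevity by letting Zhu's theorem absorb the whole non-$G_0$ part in a single stroke. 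One presentational caveat: your boundary-component profiles (free end $0$, shared end exactly $2$, hub at most $3$) are asserted rather than constructed, but they do follow from the same $v_1^0$-part factorizations you use for interior components, e.g.\ the $v_1^0$-part of $P(G_i-v_1v_2)$ equals $\pm\, w h\, P(G_i-v_1)$ for an odd-wheel end component, so this is a gap of exposition, not of substance.
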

\begin{proof}
	Let $G'$ be a graph resulting from the deletion of all non-principal vertices of $G_0$ from $G$. The vertex $v_1$ is a cutvertex in $G'$, therefore by Theorem~\ref{thm:zhu} and Lemma~\ref{lem:l2} $P(G'-\overrightarrow{v_kv_1v_2})$ contains a non-vanishing monomial $\eta' v_1^0v_2^0v_k^0\Pi v_i^{\alpha_i}\Pi u_j^{\beta_j}$, with $\alpha_i \le 2$ and $\beta_j \le 4$. In case $v_2$ or $v_k$ is a principal vertex of $G_0$, it is a leaf in $G'$ and an isolated vertex in $G'-\overrightarrow{v_kv_1v_2}$. This however is not an obstacle in the argument. Now, by Theorem~\ref{thm:even_ok}, $G_0$ has a monomial $\eta'' v_1^0v_p^0v_q^0\Pi v_i^2 u_r^4$ in $P(G_0 - \overrightarrow{v_qv_1v_p})$, and as 
	$$P(G-\overrightarrow{v_kv_1v_2})=P(G'-\overrightarrow{v_kv_1v_2})P(G_0-\overrightarrow{v_qv_1v_p}),$$
	this combines with the monomial mentioned earlier into the required monomial in $P(G-\overrightarrow{v_kv_1v_2})$.
\end{proof}
\begin{theorem}
	Let $G$ be a double wheel with principal path $\overrightarrow{v_kv_1v_2}$ with components $G_1$ and $G_2$, none of which is an even wheel, where $v_3,\dots ,v_{k-1}$ are the remaining vertices on the outer cycle of $G$ and $u_1,\dots,u_m$ are the vertices of $G$ not on the outer cycle. Then every monomial $v_1^0v_2^0v_k^0\Pi v_i^{\alpha_i}\Pi u_j^{\beta_j}$, with $\alpha_i \le 2$ and $\beta_j \le 4$ vanishes in $P(G-\overrightarrow{v_kv_1v_2})$.
\end{theorem}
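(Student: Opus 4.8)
The plan is to single out the apex $v_1$ and push everything onto the two components separately. Write $G=G_1\cup G_2$, where $G_1$ and $G_2$ share exactly the vertex $v_1$, the identified principal vertex (call it $w$), and the edge $v_1w$; fix the labelling so that the two non-identified principal vertices are $v_k\in V(G_1)$ and $v_2\in V(G_2)$. Both principal edges $v_1v_k$ and $v_1v_2$ are deleted in $H:=G-\overrightarrow{v_kv_1v_2}$, so $H-v_1=G-v_1$, and since the only overlap of the components is the edge $v_1w$, after removing $v_1$ the two sides meet only in $w$; hence $w$ is a cutvertex of $G-v_1$, there are no shared edges, and $P(G-v_1)=P(G_1-v_1)\,P(G_2-v_1)$. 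Because each $G_i$ is a broken wheel or an odd wheel, $G_i-v_1$ is either the rim path of the broken wheel or the even fan obtained by deleting a rim vertex of the odd wheel; in both cases $w$ and the far principal vertex ($v_k$ in $G_1$, $v_2$ in $G_2$) are precisely the two degree-$2$ endpoints.

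First I would extract the zero exponent on $v_1$ exactly as in the proof of Corollary~\ref{cor:00}: since $v_1$ is incident in $H$ only to the vertices $z\sim_H v_1$, one has $[v_1^0]\,P(H)=\pm\,P(G-v_1)\prod_{z\sim_H v_1}z$, so every monomial of $P(H)$ carrying $v_1^0$ is $\prod_{z\sim_H v_1}z$ times a monomial of $P(G_1-v_1)P(G_2-v_1)$. Suppose, for contradiction, that the forbidden monomial $M=\eta v_1^0v_2^0v_k^0\prod v_i^{\alpha_i}\prod u_j^{\beta_j}$ with $\alpha_i\le2,\ \beta_j\le4$ occurs. Dividing out $\prod_{z\sim_H v_1}z$ produces a monomial $M''$ of $P(G_1-v_1)P(G_2-v_1)$ in which $v_2,v_k$ keep exponent $0$ (they are not neighbours of $v_1$ in $H$), every rim neighbour of $v_1$ has lost one from its exponent, and in particular $w$, lying on the outer cycle so that $\alpha_w\le2$, now carries exponent $e_w\le1$. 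A component hub, if present, drops from $\le4$ to $\le3$; internal rim vertices of a broken component drop to $\le1$, while those of an odd component stay $\le2$.

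The heart of the matter is the claim that the ``$w$-degree-zero slice'' of each factor kills $M''$. Writing $c_i(a)$ for the coefficient in $P(G_i-v_1)$ of the restriction of $M''$ to the variables of $G_i-v_1$ with the $w$-exponent forced to $a$, I would show $c_i(0)=0$ for $i=1,2$. Here $c_i(0)$ is the coefficient of a monomial in which \emph{both} degree-$2$ endpoints of $G_i-v_1$ carry exponent $0$. If $G_i$ is a broken wheel, the surviving internal bound $\le1$ forces total degree at most the number of internal rim vertices, which is strictly less than $|E(G_i-v_1)|$, so the monomial vanishes by homogeneity of the graph polynomial. If $G_i$ is an odd wheel, then $G_i-v_1$ is an even fan and the degree count forces the single candidate with apex-exponent $3$ and all internal rim exponents $2$; this is exactly the even-fan monomial one shows to vanish by deleting a degree-$2$ endpoint, landing in an odd fan in which every $3$-colouring identifies the two endpoints, and applying Theorem~\ref{thm:outer} (as in the proof of Lemma~\ref{lem:no_030_even}) to a monomial with $\beta+\gamma=1$. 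Either way $c_i(0)=0$.

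Finally, because the two factor polynomials share only the variable $w$, the coefficient of $M''$ in $P(G_1-v_1)P(G_2-v_1)$ equals $\sum_{a_1+a_2=e_w}c_1(a_1)c_2(a_2)$ with $e_w\le1$; in every admissible split at least one of $a_1,a_2$ is $0$, so every summand contains a factor $c_i(0)=0$, and the whole coefficient vanishes. Hence $M''$, and therefore $M$, cannot occur, contradicting the assumption and proving the theorem. The step I expect to be the main obstacle is the odd-wheel sub-case of $c_i(0)=0$: pure degree-counting leaves exactly the full-degree even-fan monomial with both endpoints zero undecided, and it is precisely there that one must abandon combinatorics of exponents and invoke the colouring characterisation of Theorem~\ref{thm:outer}; everything else is bookkeeping of exponents across the cutvertex $w$.
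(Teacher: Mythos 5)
Your proposal is correct, and it rests on the same core idea as the paper's proof: split the double wheel at the identification into its two components and argue that the component forced to take $w$-exponent $0$ cannot supply its share of the monomial. The execution, however, is genuinely different in its mechanics. The paper deletes the identified principal edge $e=v_1v_p$ (after reducing, essentially without loss of generality, to $\alpha_p=2$), factors $P(G-\overrightarrow{v_kv_1v_2}-e)=P(G_1-\overrightarrow{v_kv_1v_p})\,P(G_2-\overrightarrow{v_pv_1v_2})$, and then simply quotes Theorem~\ref{thm:even_ok} and Corollary~\ref{cor:no_000_broken}, which state precisely that neither an odd wheel nor a broken wheel admits a monomial with all three principal exponents zero; the whole argument takes a few lines. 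You instead delete the apex $v_1$ outright, factor $P(G-v_1)=P(G_1-v_1)P(G_2-v_1)$ over the cutvertex $w$, and track the shared exponent through an explicit convolution $\sum_{a_1+a_2=e_w}c_1(a_1)c_2(a_2)$ with $e_w\le 1$. Your two cases establishing $c_i(0)=0$ --- homogeneity/degree counting for a broken component, and the even-fan monomial with apex exponent $3$ and all internal rim exponents $2$ vanishing via Lemma~\ref{lem:no_030_even} for an odd component --- are exactly the proofs of Corollary~\ref{cor:no_000_broken} and Theorem~\ref{thm:even_ok}, inlined one level down (the latter theorem is itself proved by passing to $G-v_1$ and invoking the fan lemmas). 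So your route is a correct unfolding of the paper's: it buys self-containedness and a uniform treatment of the $w$-exponent (no case split on $\alpha_p$), at the cost of re-deriving two results the paper already has on the shelf; citing Theorem~\ref{thm:even_ok} and Corollary~\ref{cor:no_000_broken} at your $c_i(0)=0$ step would collapse your argument to the paper's length. One small point worth a half-sentence in a final write-up: for an odd-wheel component with only three rim vertices, the fan $G_i-v_1$ is a triangle whose apex has degree $2$, so your ``single candidate'' with apex exponent $3$ is infeasible and $c_i(0)=0$ holds trivially; this covers the degenerate case where Lemma~\ref{lem:no_030_even} (stated for $k>1$) does not directly apply.
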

\begin{proof}
	Suppose otherwise. Let $v_p$ be the vertex of $G$ that the principal vertices of $G_1$ and $G_2$ were identified into, and let $e=v_1v_p$. By the hypothesis, $P(G-\overrightarrow{v_kv_1v_2})$ contains a non-vanishing monomial $\eta v_1^0v_2^0v_k^0\Pi v_i^{\alpha_i}\Pi u_j^{\beta_j}$, with $\alpha_i \le 2$ and $\beta_j \le 4$. Without loss of generality we may assume that $\alpha_p=2$ in that monomial. Hence $P(G-\overrightarrow{v_kv_1v_2}-e)$ contains a monomial $\eta v_1^0v_2^0v_k^0v_p^1\Pi v_i^{\alpha_i}\Pi u_j^{\beta_j}$, with $\alpha_i \le 2$ and $\beta_j \le 4$. As
	$$P(G-\overrightarrow{v_kv_1v_2}-e)=P(G_1-\overrightarrow{v_kv_1v_p})P(G_2-\overrightarrow{v_pv_1v_2}),$$
	where vertices of $G_1$ and $G_2$ are appropriately relabelled, we see that to produce such a monomial either $P(G_1-\overrightarrow{v_kv_1v_p})$ would have to contain a non-vanishing monomial $\eta v_1^0v_2^0v_p^0\Pi v_i^{\alpha_i}\Pi u_j^{\beta_j}$, or $P(G_1-\overrightarrow{v_kv_1v_p})$ would have to contain a non-vanishing monomial $\eta v_1^0v_p^0v_k^0\Pi v_i^{\alpha_i}\Pi u_j^{\beta_j}$, with $\alpha_i \le 2$ and $\beta_j \le 4$. This is however contradicted by Theorem~\ref{thm:even_ok} and Corollary~\ref{cor:no_000_broken}.
\end{proof}
Inductively, we obtain the following:
\begin{corollary}
	Let $G$ be a multiple wheel with principal path $\overrightarrow{v_kv_1v_2}$ without an even wheel component, where $v_3,\dots ,v_{k-1}$ are the remaining vertices on the outer cycle of $G$ and $u_1,\dots,u_m$ are the vertices of $G$ not on the outer cycle. Then every monomial $v_1^0v_2^0v_k^0\Pi v_i^{\alpha_i}\Pi u_j^{\beta_j}$, with $\alpha_i \le 2$ and $\beta_j \le 4$ vanishes in $P(G-\overrightarrow{v_kv_1v_2})$.
\end{corollary}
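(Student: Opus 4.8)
The plan is to argue by induction on the number $t$ of components of the multiple wheel $G$. The base cases are the single wheel ($t=1$) and the double wheel ($t=2$): since $G$ has no even wheel component, a single component is an odd wheel or a broken wheel, and the non-existence of the all-zero admissible monomial is then exactly Theorem~\ref{thm:even_ok} (odd wheel case) together with Corollary~\ref{cor:no_000_broken} (broken wheel case), while the double wheel with no even component is the theorem immediately preceding this corollary. These are precisely the statements asserting that deleting a component's own principal path leaves no admissible monomial carrying exponent $0$ on all three of its principal vertices.

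For the inductive step with $t\ge 3$, I would peel off one of the two end components. Let $G_A$ be the component containing $v_2$, with principal path $\overrightarrow{v_pv_1v_2}$, where $e=v_1v_p$ is the principal edge along which $G_A$ was identified with the rest of the wheel. Writing $G=G_A\cup G_B$, the subgraph $G_B$ is again a multiple wheel, now on $t-1$ components, still without an even wheel component, and with principal path $\overrightarrow{v_kv_1v_p}$; hence the induction hypothesis applies to it. The pieces $G_A$ and $G_B$ share only the edge $e$, so after deleting $e$ they are edge-disjoint and
$$P(G-\overrightarrow{v_kv_1v_2}-e)=P(G_A-\overrightarrow{v_pv_1v_2})\,P(G_B-\overrightarrow{v_kv_1v_p}).$$

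I would then reproduce the edge-splitting mechanism of the double wheel proof. Suppose toward a contradiction that $P(G-\overrightarrow{v_kv_1v_2})$ contains an admissible monomial $\eta\,v_1^0v_2^0v_k^0\prod v_i^{\alpha_i}\prod u_j^{\beta_j}$ with $\alpha_i\le 2$ and $\beta_j\le 4$. Using the identity $P(G-\overrightarrow{v_kv_1v_2})=P(G-\overrightarrow{v_kv_1v_2}-e)(v_1-v_p)$ and the vanishing $v_1$-exponent, the monomial must arise from the $-v_p$ term, so (assuming as in the double wheel case $\alpha_p=2$, the subcase $\alpha_p=1$ being only more restrictive) $P(G-\overrightarrow{v_kv_1v_2}-e)$ contains an admissible monomial with $v_1^0v_2^0v_k^0v_p^1$. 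Splitting it across the product above, the shared vertex $v_p$ carries its exponent $1$ entirely in one factor: if $v_p$ has exponent $0$ in the $G_A$-factor, that factor is a forbidden monomial $v_1^0v_2^0v_p^0\cdots$ of $P(G_A-\overrightarrow{v_pv_1v_2})$, contradicting the base case; if $v_p$ has exponent $0$ in the $G_B$-factor, that factor is a forbidden monomial $v_1^0v_p^0v_k^0\cdots$ of $P(G_B-\overrightarrow{v_kv_1v_p})$, contradicting the induction hypothesis. Either way we reach a contradiction, completing the induction.

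The computation is routine once the decomposition is in place; the only point requiring genuine care, and the main obstacle, is verifying that peeling off an end component really does leave a multiple wheel with one fewer component, no even wheel component, and principal path $\overrightarrow{v_kv_1v_p}$, so that the induction hypothesis is legitimately applicable. This is immediate from the iterative definition of multiple wheels, where the components are glued successively along principal edges incident to the common vertex $v_1$.
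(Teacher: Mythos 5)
Your proposal is correct and follows exactly the route the paper intends: the paper derives this corollary ``inductively'' from the double wheel theorem, and your argument—induction on the number of components, peeling off an end component along the identified principal edge $e=v_1v_p$, factoring $P(G-\overrightarrow{v_kv_1v_2}-e)$ into the two pieces, and forcing a forbidden $v_p^0$ monomial in one factor via the base cases (Theorem~\ref{thm:even_ok}, Corollary~\ref{cor:no_000_broken}) or the induction hypothesis—is precisely that induction spelled out. The handling of the cases $\alpha_p\in\{0,1,2\}$ and the check that the peeled-off remainder is again a multiple wheel with no even component are both sound.
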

As it was in the case of the outerplanar near-triangulations with universal vertex in Section~\ref{sec:sec3}, the information about 3-path extendability of generalized wheels is an insufficient tool to investigate the 3-path extendability of planar graphs. Therefore we need to observe and prove several facts about their polynomials.
\begin{observation} \label{lem:broken5}
	Let $G$ be a broken wheel on 5 vertices $V(G) = \lbrace v_1, \dots, v_5 \rbrace$, where $deg(v_1)=4$ and $deg(v_2)=deg(v_5)=2$. Then
	$$P(G-\overrightarrow{v_5v_1v_2}) = v_3^2v_4^2(v_5^1-v_2^1) + v_3^2v_4^1(v_1^2+v_1^1v_2^1+v_2^1v_5^1) - v_3^1v_4^2(v_1^2+v_1^1v_5^1+v_2^1v_5^1) + Q(G-\overrightarrow{v_5v_1v_2}),$$
	where polynomial $Q(G-\overrightarrow{v_5v_1v_2})$ consists of the monomials $v_1^{\alpha_1}v_2^{\alpha_2}v_3^{\alpha_3}v_4^{\alpha_4}v_5^{\alpha_5}$such that either $\alpha_3 > 2, \alpha_4 > 2$ or $\alpha_1+\alpha_2+\alpha_5>2$.
\end{observation}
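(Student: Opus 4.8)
The plan is a direct expansion of the graph polynomial, made tractable by the fact that $G-\overrightarrow{v_5v_1v_2}$ has only five edges. First I would record its edge set: deleting the two principal edges $v_1v_2$ and $v_1v_5$ from the broken wheel leaves the path edges $v_2v_3$, $v_3v_4$, $v_4v_5$ together with the two surviving hub edges $v_1v_3$, $v_1v_4$. Grouping the three edges of the central triangle $v_1v_3v_4$ into one factor, the graph polynomial is, for the orientation fixed in the statement and up to a global sign,
$$P(G-\overrightarrow{v_5v_1v_2}) = \pm\,(v_1-v_3)(v_1-v_4)(v_3-v_4)\,(v_2-v_3)(v_4-v_5),$$
where $T=(v_1-v_3)(v_1-v_4)(v_3-v_4)$ expands to the usual six-term $K_3$ polynomial $v_1^2v_3-v_1^2v_4-v_1v_3^2+v_1v_4^2+v_3^2v_4-v_3v_4^2$, and it remains to multiply by $(v_2-v_3)(v_4-v_5)=v_2v_4-v_2v_5-v_3v_4+v_3v_5$. (The global sign is irrelevant to the relative sign pattern of the monomials, which the remark after Theorem~\ref{thm:zhu} guarantees is orientation-invariant, and is pinned down so as to match the stated formula.)

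Second, I would isolate the monomials lying in the admissible region $\alpha_3\le 2$, $\alpha_4\le 2$, $\alpha_1+\alpha_2+\alpha_5\le 2$, since by definition everything else is swept into $Q(G-\overrightarrow{v_5v_1v_2})$. Because $P$ is homogeneous of degree $5$, the admissible constraints force $\alpha_3+\alpha_4\in\{3,4\}$, that is $(\alpha_3,\alpha_4)\in\{(2,2),(2,1),(1,2)\}$, with the complementary degree distributed among $v_1,v_2,v_5$ (and structurally $\alpha_2,\alpha_5\le 1$, since $v_2$ and $v_5$ each lie on a single edge). Hence only a short, finite list of monomial types can occur, and each is produced by an explicit set of term-selections from the factored product above.

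Third, I would compute the coefficients in these three cases. The case $(\alpha_3,\alpha_4)=(2,2)$ yields $v_3^2v_4^2(v_5-v_2)$; the case $(2,1)$ yields $v_3^2v_4(v_1^2+v_1v_2+v_2v_5)$; and the case $(1,2)$ need not be computed separately, as it is obtained from the $(2,1)$ case via the reflection $v_2\leftrightarrow v_5$, $v_3\leftrightarrow v_4$, which by Lemma~\ref{lem:signsymmetry} reverses the overall sign and therefore gives $-v_3v_4^2(v_1^2+v_1v_5+v_2v_5)$. These three contributions are exactly the explicit part of the asserted formula, so the remaining monomials constitute $Q(G-\overrightarrow{v_5v_1v_2})$ by construction, and each of them, failing at least one admissibility bound, satisfies $\alpha_3>2$, $\alpha_4>2$, or $\alpha_1+\alpha_2+\alpha_5>2$.

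The one genuinely delicate point, and the step I expect to need the most care, is verifying that the admissible monomials \emph{absent} from the formula really have coefficient zero: concretely $v_1v_3^2v_4^2$ (the case $(2,2)$ with extra variable $v_1$) and $v_1v_3^2v_4v_5$ (the case $(2,1)$ with extra monomial $v_1v_5$), together with their reflected partners. The clean dichotomy to exploit is that each surviving coefficient arises from a single term-selection and is forced to $\pm1$, whereas each vanishing coefficient arises from exactly two term-selections of opposite sign, which cancel. Tracking these cancellations—rather than the nonzero coefficients—is where a sign slip would most easily occur, and invoking Lemma~\ref{lem:signsymmetry} to derive the $(1,2)$ case and the reflected cancellations halves this bookkeeping.
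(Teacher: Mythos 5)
Your overall route---direct expansion of the five-edge polynomial $\pm(v_1-v_3)(v_1-v_4)(v_3-v_4)(v_2-v_3)(v_4-v_5)$, restriction to the admissible exponent region, and a reflection argument to halve the work---is exactly the computation the paper intends: the statement is labelled an Observation and carries no proof, precisely because it is this finite check. Your edge set, your reduction to the three cases $(\alpha_3,\alpha_4)\in\{(2,2),(2,1),(1,2)\}$, and the three displayed case outputs are all correct.

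One concrete claim in your last paragraph is false, however: the ``clean dichotomy'' that every vanishing admissible coefficient arises from exactly two term-selections of opposite sign. In fact the $6\times 4=24$ products of one term from the triangle factor with one term from $(v_2-v_3)(v_4-v_5)$ are pairwise \emph{distinct} monomials, so no cancellation occurs anywhere in $P(G-\overrightarrow{v_5v_1v_2})$; the polynomial simply has $24$ monomials, each with coefficient $\pm1$. The admissible monomials missing from the formula ($v_1v_3^2v_4^2$, $v_1v_3^2v_4v_5$, and its reflected partner $v_1v_2v_3v_4^2$) have coefficient zero because \emph{no} term-selection produces them at all: for instance $v_1v_3^2v_4v_5$ would require pairing $-v_1v_3^2$ with $-v_4v_5$ or $v_3^2v_4$ with $v_1v_5$, and neither $-v_4v_5$ nor $v_1v_5$ is a term of $(v_2-v_3)(v_4-v_5)$. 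So the verification is even easier than you anticipate, but the mechanism you planned to track would not survive actually doing the expansion. A minor second point: Lemma~\ref{lem:signsymmetry} as stated concerns $P(G)$ for the broken wheel itself, not $P(G-\overrightarrow{v_5v_1v_2})$; your use of it is legitimate, but only after observing that the reflection swaps the two deleted edges $v_1v_2$ and $v_1v_5$ (so the antisymmetry descends to the quotient), or more simply by checking directly that the reflection flips the sign of exactly three of the five factors above.
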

\begin{lemma} \label{lem:ordinary2k1}
	Let $G$ be an odd wheel with $2k+1$ vertices on the outer cycle, say $v_1, v_2, \dots, v_{2k+1}$, and inner vertex $u$. Then
	$$P(G-\overrightarrow{v_{2k+1}v_1v_2}) = v_3^2 v_4^2 \dots v_{2k}^2 u^3 v_1^0 (v_2 - v_{2k+1}) +$$
	$$ + v_2^0 (v_1 + v_{2k+1}) u^4 Q_1(G-\overrightarrow{v_{2k+1}v_1v_2}) - v_{2k+1}^0 (v_1 + v_2)u^4 Q_2(G-\overrightarrow{v_{2k+1}v_1v_2}) + $$
	$$ \pm v_3^0 v_4^2 \dots v_{2k}^2 u^4 v_1^1 v_2^1 v_{2k+1}^0 \pm v_3^2 v_4^2 \dots v_{2k-1}^2 v_{2k}^0 u^4 v_1^1 v_2^0 v_{2k+1}^1+Q(G-\overrightarrow{v_{2k+1}v_1v_2}),$$
	where
	$$Q_1(G-\overrightarrow{v_{2k+1}v_1v_2})=\sum_{2|l} \Pi v_i^2v_l^1, 3 \le i \le 2k,$$
	$$Q_2(G-\overrightarrow{v_{2k+1}v_1v_2})=\sum_{2 \nmid l} \Pi v_i^2v_l^1, 3 \le i \le 2k$$
	and $Q(G-\overrightarrow{v_{2k+1}v_1v_2})$ is the remaining part. Moreover, any monomial $\eta u^\beta \prod v_i^{\alpha_i}$ in $Q(G-\overrightarrow{v_{2k+1}v_1v_2})$ such that $\alpha_1+\alpha_2+\alpha_{2k+1} \le 2$ and $\beta \le 4$ is of the form
	$$\eta u^4v_1^1v_2^{\alpha_2}v_{2k+1}^{\alpha_{2k+1}}v_l^0 \prod_{i \neq l}v_i^2,$$
	with $\alpha_2+\alpha_{2k+1}=1$ and $4 \le l \le 2k-1$.
\end{lemma}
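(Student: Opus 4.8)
The plan is to compute $P(G-\overrightarrow{v_{2k+1}v_1v_2})$ by factoring along the structure of the odd wheel and then carefully account for every monomial that could survive with a "small" principal-path contribution. Since the edges $v_{2k+1}v_1$ and $v_1v_2$ are deleted, the universal vertex $u$ is joined to all $v_i$ but $v_1$ is joined only to $v_3,\dots,v_{2k}$ (having lost its two outer-cycle neighbours), while the outer path $v_2v_3\cdots v_{2k}v_{2k+1}$ remains. The natural starting point is to write $P(G-\overrightarrow{v_{2k+1}v_1v_2})$ as a product/sum reflecting the factorization $P = \prod_{i}(u-v_i)\cdot\prod_{i=2}^{2k}(v_i-v_{i+1})\cdot\prod_{i=3}^{2k}(v_1-v_i)$, and expand selectively.

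First I would isolate the terms according to the exponent of $v_1$. The factor $\prod_{i=3}^{2k}(v_1-v_i)$ contributes powers of $v_1$ ranging up to $2k-2$; the monomials of interest to the final \emph{moreover} clause have $\alpha_1\le 2$, in fact the displayed leading terms already pin down $v_1^0$ and $v_1^1$ behaviour. The key organizing principle is the constraint $\alpha_1+\alpha_2+\alpha_{2k+1}\le 2$ together with $\beta\le 4$: since $\deg_G(u)=2k+1$ but $u$ contributes at most $u^4$ here, and the total degree of $P(G-\overrightarrow{v_{2k+1}v_1v_2})$ is fixed, a monomial with small principal exponents is forced to have \emph{large} exponents elsewhere. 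I would count edges to see that the maximum available total degree essentially forces $\prod_{i\ne l} v_i^2$ on the non-principal cycle vertices, leaving exactly one "slack" index $l$ with $v_l^0$. This is precisely the rigidity already exploited in the proofs of Lemmas~\ref{lem:no_030_even}, \ref{lem:all_040_even} and \ref{lem:odd_wheels}, and I would invoke those lemmas (applied to the sub-wheel $G-v_1$, which is an outerplanar near-triangulation with universal vertex $u$) to determine which $v_l^0$ placements give nonvanishing coefficients and to fix $|\eta|=1$.

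The decisive step is to show that once $\alpha_1+\alpha_2+\alpha_{2k+1}\le 2$ and $\beta\le 4$, the only admissible shape is $u^4 v_1^1 v_2^{\alpha_2} v_{2k+1}^{\alpha_{2k+1}} v_l^0\prod_{i\ne l} v_i^2$ with $\alpha_2+\alpha_{2k+1}=1$ and $4\le l\le 2k-1$. To pin $\alpha_1=1$, I would argue that $\alpha_1=0$ together with the degree bound is incompatible: writing $G-v_1$ as the universal-vertex near-triangulation and using that $P(G-\overrightarrow{v_{2k+1}v_1v_2})=P(G-v_1)\cdot\prod_{i=3}^{2k}(v_1-v_i)$, a term with $v_1^0$ comes only from taking all the $(-v_i)$ choices, which over-saturates the $v_i$ exponents and collides with $\sum(2k-3)$ slack already used; meanwhile $\alpha_1=2$ would push the total principal weight above $2$ once $v_2$ or $v_{2k+1}$ also appears. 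The endpoints $l=3$ and $l=2k$ are excluded separately because, by the $\mathcal{C}(v_2)=\mathcal{C}(v_{2k})$ argument from Lemma~\ref{lem:odd_wheels} applied after removing $v_1$, the corresponding monomials reduce to ones already shown to vanish; the condition $\alpha_2+\alpha_{2k+1}=1$ follows from the antisymmetry of the leading $(v_2-v_{2k+1})$ and $(v_1+v_{2k+1})$, $(v_1+v_2)$ factors recorded in the displayed formula.

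\textbf{The main obstacle} I anticipate is bookkeeping the \emph{signs and exact coefficients} rather than merely the support of the monomials: the displayed formula carries explicit $\pm$ signs and the auxiliary sums $Q_1,Q_2$, so I must track how the $(v_2-v_{2k+1})$, $(v_1+v_2)$ and $(v_1+v_{2k+1})$ factors distribute across the parity-indexed families of Lemma~\ref{lem:odd_wheels}, reconciling the counter-clockwise ordering with the index parities. The cleanest route is probably to prove the support statement (the \emph{moreover} clause) first by the degree-counting and lemma-invocation sketched above, and only then verify that the leading terms organize into the stated closed form by matching against the $k=2$ base case and propagating via the $G-\{v_{2k-1},v_{2k}\}$-type reductions used earlier, so that all remaining monomials are absorbed into $Q(G-\overrightarrow{v_{2k+1}v_1v_2})$ by construction.
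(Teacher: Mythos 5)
Your proposal rests on a misreading of the graph's structure, and this error invalidates the core of the argument. In an ordinary (odd) wheel, $v_1$ is an \emph{outer-cycle} vertex: its neighbours in $G$ are exactly $v_2$, $v_{2k+1}$ and the hub $u$. After deleting the principal path $\overrightarrow{v_{2k+1}v_1v_2}$, the vertex $v_1$ therefore has degree $1$ --- its sole remaining neighbour is $u$. Your factorization
$$P(G-\overrightarrow{v_{2k+1}v_1v_2}) = \prod_{i}(u-v_i)\cdot\prod_{i=2}^{2k}(v_i-v_{i+1})\cdot\prod_{i=3}^{2k}(v_1-v_i)$$
contains the spurious factor $\prod_{i=3}^{2k}(v_1-v_i)$, i.e.\ you treat $v_1$ as if it were the hub of a broken wheel adjacent to $v_3,\dots,v_{2k}$. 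This describes a different graph: your polynomial has degree $6k-2$ while the correct one has degree $4k$ (the number of edges of $G-\overrightarrow{v_{2k+1}v_1v_2}$). Everything downstream collapses: the claim that ``$v_1^0$ comes only from taking all the $(-v_i)$ choices,'' the degree-counting used to force $\prod_{i\ne l}v_i^2$, and the argument pinning $\alpha_1=1$ all depend on this nonexistent factor. (With the correct structure, $\alpha_1\le 1$ is immediate, since $v_1$ is incident to a single edge.) The correct starting identity, used throughout the paper, is $P(G-\overrightarrow{v_{2k+1}v_1v_2})=P(G-v_1)\cdot(u-v_1)$, where $G-v_1$ is the outerplanar near-triangulation with universal vertex $u$.

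Your instinct to invoke Theorem~\ref{thm:outer}, Lemmas~\ref{lem:signsymmetry}, \ref{lem:no_030_even}, \ref{lem:all_040_even}, \ref{lem:odd_wheels} and Corollary~\ref{cor:all_121_odd} is sound --- this is indeed how the paper proceeds. But the paper's proof is not a global degree count: it translates each block of the displayed formula into a statement about specific monomials of $P(G-v_1)$, $P(G-\lbrace v_1,v_2\rbrace)$ or $P(G-\lbrace v_1,v_{2k+1}\rbrace)$ (each of which is again a universal-vertex near-triangulation), uses the $3$-colouring conditions $\mathcal{C}(v_2)\ne\mathcal{C}(v_{2k+1})$, $\mathcal{C}(v_3)=\mathcal{C}(v_{2k+1})$, $\mathcal{C}(v_2)=\mathcal{C}(v_{2k})$ to decide existence or vanishing via the cited lemmas, and gets the signs from Lemma~\ref{lem:signsymmetry} together with factorizations such as $P(G-v_1)=P(G-\lbrace v_1,v_2\rbrace)(v_2-v_3)(v_2-u)$. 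For the \emph{moreover} clause, the exclusions of $l\in\lbrace 3,2k\rbrace$ and the constraint $\alpha_2+\alpha_{2k+1}=1$ come from these same colouring-based vanishing results, not from antisymmetry of the leading factors as you suggest. You would need to rebuild your argument from the correct factorization before any of these steps can be carried out.
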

\begin{proof}
	Observe, that $G-v_1$ is the outerplanar near triangulation of the form:
	$$V(G-v_1)=\lbrace u,v_2,\dots,v_{2k+1}\rbrace,$$
	$$N_{G-v_1}(u)=\lbrace v_2,\dots,v_{2k+1}\rbrace,$$
	$$deg_{G-v_1}(v_2)=deg_{G-v_1}(v_{2k+1})=2,$$
	$$\mathcal{C}(v_2) \neq \mathcal{C}(v_{2k+1}),$$
	where $\mathcal{C}$ is any 3-colouring of $G-v_1$.
	
	Take a look at first at the $v_3^2 v_4^2 \dots v_{2k}^2 u^3 v_1^0 (v_2 - v_{2k+1})$ part of $P(G-\overrightarrow{v_{2k+1}v_1v_2})$. Its existence is equivalent to the existence of $-v_2^1v_3^2 v_4^2 \dots v_{2k}^2v_{2k+1}^0 u^2$ and $v_2^0v_3^2 v_4^2 \dots v_{2k}^2 v_{2k+1}^1u^2$ in $P(G-v_1)$. But as $\mathcal{C}(v_2) \neq \mathcal{C}(v_{2k+1})$ in every 3-colouring $\mathcal{C}$ of $G-v_1$, their existence is guaranteed by Theorem~\ref{thm:outer}, and the fact that their signs differ follows from Lemma~\ref{lem:signsymmetry}. On the other hand, the existence of $u^3v_1^1v_2^0v_3^2 v_4^2 \dots v_{2k}^2v_{2k+1}^0$ in $P(G-\overrightarrow{v_{2k+1}v_1v_2})$ would also imply existence of $u^4v_1^0v_2^0v_3^2 v_4^2 \dots v_{2k}^2v_{2k+1}^0$ there, which is not the case by Theorem~\ref{thm:even_ok}.
	
	Now consider the part
	$$v_2^0 (v_1 + v_{2k+1}) u^4 Q_1(G-\overrightarrow{v_{2k+1}v_1v_2}) - v_{2k+1}^0 (v_1 + v_2)u^4 Q_2(G-\overrightarrow{v_{2k+1}v_1v_2}),$$
	and rewrite it as
	$$v_1^1v_2^0v_{2k+1}^0u^4(Q_1(G-\overrightarrow{v_{2k+1}v_1v_2})-Q_2(G-\overrightarrow{v_{2k+1}v_1v_2}))+$$ 
	$$+v_1^0v_2^0v_{2k+1}^1u^4Q_1(G-\overrightarrow{v_{2k+1}v_1v_2})-v_1^0v_2^1v_{2k+1}^0u^4Q_2(G-\overrightarrow{v_{2k+1}v_1v_2}).$$
	We will therefore examine $P(G-v_1)$ for the existence of
	$$v_2^0v_{2k+1}^0u^4(Q_1(G-\overrightarrow{v_{2k+1}v_1v_2})-Q_2(G-\overrightarrow{v_{2k+1}v_1v_2}))-$$ 
	$$-v_2^0v_{2k+1}^1u^3Q_1(G-\overrightarrow{v_{2k+1}v_1v_2})+v_2^1v_{2k+1}^0u^3Q_2(G-\overrightarrow{v_{2k+1}v_1v_2}).$$
	The first component can be easily deduced as a consequence of Lemma~\ref{lem:even_wheels}. For the second one, consider $G-\lbrace v_1,v_2 \rbrace$. As $\mathcal{C}(v_3)=\mathcal{C}(v_{2k+1})$, we may apply Corollary~\ref{cor:all_121_odd} to find monomials of the form $v_3^1v_{2k+1}^1u^2v_l^1\Pi v_i^2$ for every even $l$ in $P(G-\lbrace v_1,v_2 \rbrace)$, which imply the existence of all necessary monomials in $P(G-v_1)$. Symmetrically, we find all the monomials from the third component by analyzing $G-\lbrace v_1,v_{2k+1} \rbrace$. The opposite signs are again inferred from Lemma~\ref{lem:signsymmetry}. Finally, from the presence of equivalence in the statement of Corollary~\ref{cor:all_121_odd} we deduce that
	$$\pm v_1^0v_2^1v_{2k+1}^0 u^4 Q_1(G-\overrightarrow{v_{2k+1}v_1v_2}) \pm v_1^0v_2^0v_{2k+1}^1 u^4 Q_2(G-\overrightarrow{v_{2k+1}v_1v_2})$$
	vanishes in $P(G-\overrightarrow{v_{2k+1}v_1v_2})$.
	
	We now move to the last two monomials from the assertion. We will look for
	$$\pm v_2^1 v_3^0 v_4^2 \dots v_{2k}^2 u^4 v_{2k+1}^0 \pm v_2^0v_3^2 v_4^2 \dots v_{2k-1}^2 v_{2k}^0 u^4 v_{2k+1}^1$$
	in $P(G-v_1)$. By Lemma~\ref{lem:odd_wheels} we can find the monomial  $\pm v_3^0 v_4^2 \dots v_{2k}^2 u^3 v_{2k+1}^0$ in $P(G-\lbrace v_1,v_2 \rbrace)$, and as
	$$P(G-v_1)=P(G-\lbrace v_1,v_2 \rbrace)(v_2-v_3)(v_2-u)=P(G-\lbrace v_1,v_2 \rbrace)(v_2^2-v_2^1u^1-v_2^1v_3^1+v_3^1u^1),$$
	$P(G-v_1)$ contains a non-vanishing $\pm v_2^1 v_3^0 v_4^2 \dots v_{2k}^2 u^4 v_{2k+1}^0$. As in the previous case, we find the second monomial with the symmetric argument concerning $G-\lbrace v_1,v_{2k+1} \rbrace$.
	
	It remains to show that monomials $\eta v_3^0 v_4^2 \dots v_{2k}^2 u^4 v_1^1 v_2^0 v_{2k+1}^1$, $\eta v_3^0 v_4^2 \dots v_{2k}^2 u^4 v_1^0 v_2^1 v_{2k+1}^1$, $\eta v_3^2 v_4^2 \dots v_{2k-1}^2 v_{2k}^0 u^4 v_1^0 v_2^1 v_{2k+1}^1$ and $\eta v_3^2 v_4^2 \dots v_{2k-1}^2 v_{2k}^0 u^4 v_1^1 v_2^1 v_{2k+1}^0$ all vanish in $P(G-\overrightarrow{v_{2k+1}v_1v_2})$. The first and the last one are obviously impossible, as in each of them there is a pair of neighbouring vertices that both have exponent equal to 0. Taking on the second one, it would imply that there was $\eta v_3^0 v_4^2 \dots v_{2k}^2 u^2 v_{2k+1}^1$ in $P(G-\lbrace v_1,v_2 \rbrace)$, but as $\mathcal{C}(v_3)=\mathcal{C}(v_{2k+1})$, this would contradict Theorem~\ref{thm:outer}. Similarly, the fact that $\mathcal{C}(v_2)=\mathcal{C}(v_{2k})$ rules out the third of the monomials.
\end{proof}
We know that generalized wheels are not 3-path extendable. It turns out, however, that they are not far from it, as deletion of the single interior edge from such a wheel is enough to ensure the 3-path extendability.
\begin{lemma}
	Let $G$ be an odd wheel with principal path $v_kv_1v_2$, $k>3$, and inner vertex $v$, and let $e$ be any edge of $G$ incident to $v$. Then there is a non-vanishing monomial $\eta v_1^0v_2^0v_k^0v^\alpha \Pi v_i^{\alpha_i}$ with $\alpha_i \le 2, \alpha \le 4$ in $P(G-\overrightarrow{v_kv_1v_2}-e)$.
\end{lemma}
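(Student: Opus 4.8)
The plan is to exploit the factorisation $P(G-\overrightarrow{v_kv_1v_2})=(v-v_j)\,P(G-\overrightarrow{v_kv_1v_2}-e)$ coming from $e=vv_j$, together with the fact that in $G-\overrightarrow{v_kv_1v_2}$ the vertex $v_1$ is a pendant attached to $v$. Since $G-\overrightarrow{v_kv_1v_2}-e$ has $2k-3$ edges, any admissible monomial $v_1^0v_2^0v_k^0v^\alpha\prod v_i^{\alpha_i}$ (with $\alpha\le 4$, $\alpha_i\le 2$) has total degree $2k-3$, exactly one unit below the maximum $4+2(k-3)$; hence it is forced into one of two shapes, either $v^3v_1^0v_2^0v_k^0\prod v_i^2$ (\emph{type a}) or $v^4v_1^0v_2^0v_k^0v_l^1\prod_{i\ne l}v_i^2$ (\emph{type b}). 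I would split the proof according to where the spoke $e$ lands: $3\le j\le k-1$ (interior), $j\in\{2,k\}$, and $j=1$, reducing each case to a monomial already controlled in Section~\ref{sec:sec3}.

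For an interior spoke $3\le j\le k-1$ I would target a type-a monomial. Writing $P(G-\overrightarrow{v_kv_1v_2})=\sum_s v_j^sC_s$ and $P(G-\overrightarrow{v_kv_1v_2}-e)=\sum_s v_j^sD_s$, the identity $(v-v_j)\sum_s v_j^sD_s=\sum_s v_j^sC_s$ forces $D_2=-C_3$ at the top $v_j$-degree (indeed $v_j$ has degree $3$ before and $2$ after deleting $e$). Consequently the coefficient of $v^3v_1^0v_2^0v_k^0\prod v_i^2$ equals, up to sign, that of $v^3v_1^0v_2^0v_k^0v_j^3\prod_{i\ne j}v_i^2$ in $P(G-\overrightarrow{v_kv_1v_2})$. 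Peeling off the pendant $v_1$ (taking the $v$-term of $(v-v_1)$) turns the latter into $v^2v_2^0v_k^0v_j^3\prod_{i\ne j}v_i^2$ in $P(H)$, where $H=G-v_1$ is the broken wheel with universal vertex $v$ and rim $v_2\dots v_k$; this is non-vanishing by Lemma~\ref{lem:030}.

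For $j\in\{2,k\}$, say $j=2$ (the case $j=k$ following from the reflection symmetry of the wheel as in Lemma~\ref{lem:signsymmetry}), both $v_1$ and $v_2$ become pendants, so $P(G-\overrightarrow{v_kv_1v_2}-e)=(v-v_1)(v_3-v_2)\,P(H')$ with $H'$ the broken wheel on universal vertex $v$ and rim $v_3\dots v_k$, having an \emph{odd} number of rim vertices. Choosing the type-b monomial whose special vertex is $v_3$ and peeling off the two pendants reduces its coefficient to that of $v^3v_3^0v_k^0\prod v_i^2$ in $P(H')$, which is precisely the non-vanishing monomial supplied by Lemma~\ref{lem:odd_wheels}.

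The remaining, and hardest, case is $j=1$: now $v_1$ is isolated, so $P(G-\overrightarrow{v_kv_1v_2}-e)=P(H)$ with $H$ the broken wheel on the \emph{even} rim $v_2\dots v_k$, and I need a type-b monomial with both degree-$2$ endpoints $v_2,v_k$ at exponent $0$ — a configuration not given directly by Lemma~\ref{lem:even_wheels}, which only controls $v_1^0v_{2k-1}^0$. My plan is the edge-splitting technique of Lemmas~\ref{lem:all_040_even}--\ref{lem:odd_wheels}: delete a rim edge $v_lv_{l+1}$ with $l$ odd (possible since $k\ge 5$), split $H$ at the cutvertex $v$ into even broken wheels $H_1$ (rim $v_2\dots v_l$) and $H_2$ (rim $v_{l+1}\dots v_k$), both under case (ii) of Theorem~\ref{thm:outer}; Theorem~\ref{thm:outer} and Lemma~\ref{lem:l2} then yield $v^4v_2^0v_k^0v_l^1v_{l+1}^1\prod v_i^2$ in $P(H-v_lv_{l+1})$, and re-inserting the edge through $P(H)=P(H-v_lv_{l+1})(v_l-v_{l+1})$ produces the desired type-b monomial. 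The main obstacle is that this re-insertion might cancel; ruling it out reduces to showing the competing term needs $v^3v_{l+1}^0v_k^0\prod v_i^2$ in $P(H_2)$, i.e. a type-a monomial of an even broken wheel, which vanishes. I would prove this auxiliary vanishing exactly as in Lemma~\ref{lem:no_030_even}: deleting the endpoint $v_{l+1}$ turns it into a $(\beta,\gamma)=(1,0)$ monomial of an odd broken wheel, forbidden by case (i) of Theorem~\ref{thm:outer}. The same fact also kills all type-a monomials when $j=1$, confirming that type b is the only available shape there.
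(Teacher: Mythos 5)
Your proof is correct, and in the interior-spoke case it follows a genuinely different route from the paper's. The paper argues by four cases on $e=vv_j$: for $j=1$ it simply cites Lemma~\ref{lem:all_040_even}; for $j\in\{2,k\}$ it deletes $v_1,v_2$, invokes Lemma~\ref{lem:odd_wheels} and rebuilds, exactly as you do; and for the remaining spokes it needs two further cases ($j\in\{3,k-1\}$ and the generic $j$), each time deleting vertices, applying Lemma~\ref{lem:012} (in the generic case twice, glued by Lemma~\ref{lem:l2} across the cutvertex $v$) and reconstructing the monomial by explicit multiplication. Your treatment of all interior spokes at once is cleaner: deleting the spoke drops the degree of $v_j$ from $3$ to $2$, so your top-coefficient identity $D_2=-C_3$ ties the type-a coefficient in $P(G-\overrightarrow{v_kv_1v_2}-e)$ to the coefficient of the $v_j^3$-monomial in $P(G-\overrightarrow{v_kv_1v_2})$, which after peeling the pendant $v_1$ is exactly the non-vanishing monomial of Lemma~\ref{lem:030}; this unifies the paper's Cases 3 and 4 and puts Lemma~\ref{lem:030} to work where the paper uses Lemma~\ref{lem:012} instead. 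The one place where you do unnecessary work is $j=1$: the configuration you want, $v^4v_2^0v_k^0v_l^1\prod v_i^2$ with both degree-$2$ rim endpoints of $H=G-v_1$ at exponent $0$, is precisely what Lemma~\ref{lem:all_040_even} asserts --- its statement carries the correct indices $v_1^0v_{2k}^0$, and the $v_{2k-1}^0$ appearing in Lemma~\ref{lem:even_wheels} (and in Lemma~\ref{lem:no_030_even}) is an index typo for $v_{2k}^0$, which you read literally. Your re-derivation --- deleting a rim edge $v_lv_{l+1}$ with appropriate parity, combining the two case-(ii) monomials of Theorem~\ref{thm:outer} via Lemma~\ref{lem:l2}, and ruling out cancellation because an even broken wheel has no $v^3$-monomial with both rim endpoints at exponent $0$ --- is sound, but it reproduces the paper's own proof of Lemma~\ref{lem:all_040_even} nearly verbatim, so you could cite that lemma directly, as the paper does.
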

\begin{proof}
	Let
	$$G'=G-\overrightarrow{v_kv_1v_2}-e.$$
	We have four possible cases:
	
	\textit{Case 1: $e=vv_1$.} In this case, $v_1$ is an isolated vertex in $G'$, and $G'-v_1$ is an outerplanar near-triangulation with odd number of vertices and universal vertex $v$. By Lemma~\ref{lem:all_040_even} there is a non-vanishing monomial $\eta v^4v_2^0v_k^0\Pi v_i^{\alpha_i}, \alpha_i \le 2$ in $P(G'-v_1)$, and as $v_1$ is isolated, there is a non-vanishing monomial $\eta v^4v_1^0v_2^0v_k^0\Pi v_i^{\alpha_i}, \alpha_i \le 2$ in $P(G')$.
	
	\textit{Case 2: $e=vv_2$ or $e=vv_k$.} Take $e=vv_2$ first. One can see that
	$$G''=G'-\lbrace v_1,v_2 \rbrace$$
	is an outerplanar near-triangulation with even number of vertices and universal vertex $v$. By Lemma~\ref{lem:odd_wheels} $P(G'')$ contains a non-vanishing monomial $\eta v^3v_k^0v_3^0\Pi v_i^2$. Since
	$$N_{G'}(v_1)=\lbrace v \rbrace$$
	and
	$$N_{G'}(v_2)=\lbrace v_3 \rbrace,$$
	we have that
	$$P(G')=P(G'')(v_1-v)(v_2-v_3),$$
	therefore $P(G')$ contains a non-vanishing monomial $\eta v^4v_1^0v_2^0v_k^0v_3^1\Pi v_i^2$. If $e=vv_k$, the argument is symmetric.
	
	\textit{Case 3: $e=vv_3$ or $e=vv_{k-1}$.} Take $e=vv_3$ first. Here
	$$G''=G'-\lbrace v_1,v_2,v_3 \rbrace$$
	is an outerplanar near-triangulation with universal vertex $v$. By Lemma~\ref{lem:012} $P(G'')$ contains a non-vanishing monomial $-\eta v^1v_k^0\Pi v_i^2$. Since
	$$N_{G'-v_3}(v_1)=N_{G'-v_3}(v_2)=\lbrace v \rbrace,$$
	we have that
	$$P(G'-v_3)=P(G'')(v_1-v)(v_2-v),$$
	therefore $P(G'-v_3)$ contains a non-vanishing monomial $-\eta v^3v_1^0v_2^0v_k^0\Pi v_i^2$. Finally, $N_{G'}(v_3)=\lbrace v_2,v_4 \rbrace$, therefore
	$$P(G')=P(G'-v_3)(v_2-v_3)(v_3-v_4)$$
	contains a non-vanishing monomial $\eta v^3v_1^0v_2^0v_k^0v_3^2\Pi v_i^2$. If $e=vv_{k-1}$, the argument is symmetric.
	
	\textit{Case 4: $e=vv_p, p \notin \lbrace 1,2,3,k-1,k \rbrace$.} For this case to be different than previous ones, we need $k>5$. Let
	$$G''=G'-\lbrace v_1,v_p \rbrace.$$
	The graph $G''$ has a cutvertex $v$, so we split it by this cutvertex into graphs $G_1$ and $G_2$, with $v_2 \in V(G_1)$. These graphs are both outerplanar near-triangulations with universal vertex $v$, so $P(G_1)$ and $P(G_2)$ contain non-vanishing monomials $\eta_1 v^1v_2^0\Pi v_i^2$ and $\eta_1 v^1v_k^0\Pi v_i^2$, respectively. By Lemma~\ref{lem:l2} there is a non-vanishing monomial $\eta v^2v_2^0v_k^0\Pi v_i^2$ in $P(G'')$, where $\eta=\eta_1\eta_2$. Since
	$$N_{G'}(v_1)=\lbrace v \rbrace$$
	and
	$$N_{G'}(v_p)=\lbrace v_{p-1},v_{p+1} \rbrace,$$
	we have that
	$$P(G')=P(G'')(v_1-v)(v_{p-1}-v_p)(v_p-v_{p+1}),$$
	therefore $P(G')$ contains a non-vanishing monomial $\eta v^3v_1^0v_2^0v_k^0v_p^2\Pi v_i^2$.
\end{proof}
\begin{lemma}
	Let $G$ be a broken wheel with principal path $\overrightarrow{v_kv_1v_2},k>3$, and let $e=v_1v_p, p \notin \lbrace 2,k \rbrace$. Then in $P(G-\overrightarrow{v_kv_1v_2}-e)$ there is a non-vanishing monomial $\eta v_1^0v_2^0v_k^0\Pi v_i^2$.
\end{lemma}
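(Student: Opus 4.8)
The plan is to exploit the fact that the target monomial is of \emph{top degree}. Writing $G'=G-\overrightarrow{v_kv_1v_2}-e$ with $e=v_1v_p$ and $3\le p\le k-1$, one checks that $G'$ has exactly $2(k-3)=2k-6$ edges, which equals the total degree of the monomial $v_1^0v_2^0v_k^0\prod_{i=3}^{k-1}v_i^2$. Hence $P(G')$ is homogeneous of this degree, so it suffices to produce a \emph{single} orientation of $G'$ with out-degrees $0$ at $v_1,v_2,v_k$ and $2$ at every other vertex whose signed contribution does not cancel; equivalently, to exhibit the monomial as a nonzero product assembled from smaller pieces. I would build it by successive peeling of low-degree vertices together with the single-vertex gluing of Lemma~\ref{lem:l2}.

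First I would peel the two pendant vertices $v_2$ and $v_k$ (each has degree $1$ in $G'$, with neighbours $v_3$ and $v_{k-1}$ respectively) by factoring $P(G')=P(G'-v_2-v_k)(v_2-v_3)(v_{k-1}-v_k)$; selecting $-v_3$ and $v_{k-1}$ isolates the part of $P(G')$ carrying $v_2^0v_k^0$, so the claim reduces to finding $\eta\,v_1^0v_3^1v_{k-1}^1\prod_{4\le i\le k-2}v_i^2$ in $P(G'')$, where $G''=G'-v_2-v_k$ is the broken wheel on $v_1,v_3,\dots,v_{k-1}$ with the single spoke $v_1v_p$ deleted. The crucial move is then to peel $v_p$ itself: in $G''$ it has degree $2$ (its spoke is gone) and target exponent $2$, so both its edges must leave it, and $P(G'')=P(G''-v_p)(v_p-v_{p-1})(v_p-v_{p+1})$ shows that the coefficient of the target equals, up to sign, the coefficient of $v_1^0v_3^1v_{k-1}^1\prod v_i^2$ in $P(G''-v_p)$.

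The payoff is that $G''-v_p$ splits at the cutvertex $v_1$ into two broken wheels $W_L$ (hub $v_1$, path $v_3\dots v_{p-1}$) and $W_R$ (hub $v_1$, path $v_{p+1}\dots v_{k-1}$) sharing only $v_1$, so Lemma~\ref{lem:l2} applies directly with the common exponent $0=0+0$ at $v_1$. For each factor I would exhibit the required monomial exactly as in the construction of Lemma~\ref{lem:012}: starting from the path polynomial, whose monomial $v_3^0v_4^1\cdots v_{p-1}^1$ (respectively $v_{p+1}^1\cdots v_{k-1}^0$) has coefficient $\pm1$, and adjoining the hub $v_1$ to all path vertices; forcing $v_1^0$ means selecting the non-hub term from every spoke factor, which raises each path exponent by one and yields $v_1^0v_3^1\prod_{4\le i\le p-1}v_i^2$ (respectively $v_1^0v_{k-1}^1\prod_{p+1\le i\le k-2}v_i^2$) with coefficient $\pm1$. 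Gluing and then reversing the two peels gives the target monomial in $P(G')$ with coefficient $\pm1$, hence non-vanishing.

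The main obstacle is structural rather than computational: $\{v_1,v_p\}$ is a $2$-vertex separator of $G'$ but, because $e=v_1v_p$ has been deleted, there is no edge joining the two parts, so neither Lemma~\ref{lem:l2} nor Corollaries~\ref{cor:00}--\ref{cor:00_2} can be applied across it directly. The device that removes this obstruction is peeling $v_p$ first, which turns the $2$-separation into a genuine cutvertex at $v_1$. I would also treat the boundary cases $p=3$ and $p=k-1$ separately, since there one of $W_L,W_R$ degenerates; in those cases $v_p$ is adjacent to a pendant and the same peeling reduces $G'$ to a single broken wheel (or, for the smallest $k$, to a bare path), where the monomial is read off immediately.
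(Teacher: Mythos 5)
Your proof is correct, but it is organized around a different decomposition than the paper's. You peel the forced vertices ($v_2$, $v_k$, then $v_p$) and split what remains at the cutvertex $v_1$ into two broken wheels, gluing their monomials with Lemma~\ref{lem:l2}; the paper instead deletes nothing: it starts from the outer path $\overrightarrow{v_2\dots v_k}$, applies Lemma~\ref{lem:l2} at $v_p$ (which \emph{is} a cutvertex of the bare path) to obtain the non-vanishing monomial $\pm v_2^0v_k^0v_p^2\Pi v_i^1$, and then multiplies by the spoke factors $\prod_{i\neq p}(v_1-v_i)$, where the requirement $v_1^0$ forces the choice $-v_i$ in every factor and simply raises each remaining exponent to $2$. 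Both arguments exploit the same mechanism --- every choice is forced except along the outer path, so the coefficient of the target is $\pm$ a product of path coefficients --- but they place the Lemma~\ref{lem:l2} split at the two different vertices of the separator $\lbrace v_1,v_p\rbrace$: you make $v_1$ a cutvertex by removing $v_p$, the paper makes $v_p$ a cutvertex by postponing the spokes. The paper's build-up is shorter and uniform in $p$, with no degenerate cases; your peeling route needs the separate treatment of $p=3$, $p=k-1$ and the smallest $k$ (which you correctly flag, and which does go through), but in exchange it makes the top-degree/orientation interpretation and the precise role of the deleted spoke --- converting the $2$-separator into a genuine cutvertex --- completely explicit.
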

\begin{proof}
	Let $G'=G-\overrightarrow{v_kv_1v_2}-e$, and start with a path $\overrightarrow{v_2\dots v_k}$. By Lemma~\ref{lem:l2}, the polynomial of that path contains a non-vanishing monomial $\pm v_2^0v_k^0v_p^2\Pi v_i^1$. Now
	$$P(G')=P(\overrightarrow{v_2\dots v_k})\prod_{i \neq p}(v_1-v_i),$$
	hence $P(G')$ contains a non-vanishing monomial $\eta v_1^0v_2^0v_k^0v_p^2\Pi v_i^2$.
\end{proof}
From these lemmas about odd and broken wheels, with the help of Theorem~\ref{thm:zhu}, we obtain a corollary for all generalized wheels. 
\begin{corollary}\label{cor:gen-e}
	Let $G$ be a generalized wheel with a principal path $v_1v_2v_k$, $k>3$, without a component that is an odd wheel with three vertices on the outer cycle, and let $e$ be an edge of $G$ not on the outer cycle. Then in $P(G-\overrightarrow{v_kv_1v_2}-e)$ there is a non-vanishing monomial $\eta v_1^0v_2^0v_k^0\Pi v_i^{\alpha_i}\Pi u_j^{\beta_j}, \alpha_i \le 2, \beta_j \le 4$.
\end{corollary}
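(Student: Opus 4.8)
The plan is to induct on the number of components of $G$, isolating the piece that contains $e$ and reducing everything else to Theorem~\ref{thm:zhu}. The base case is when $G$ is a single wheel. If $G$ is an odd wheel then, since the excluded $3$-vertex odd wheel cannot occur, we have $k>3$ and the preceding lemma on odd wheels applies verbatim; if $G$ is a broken wheel then every non-outer-cycle edge is an interior spoke $v_1v_p$ with $p\notin\{2,k\}$, so the preceding lemma on broken wheels applies. In both cases the hub is the only interior vertex, so the monomial produced already has the required shape.

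For the inductive step I would split according to where $e$ sits. First suppose $e$ lies in the interior of a single component $G_0$, i.e.\ $e$ is not one of the identified principal spokes. Here I mimic the proof of Theorem~\ref{thm:even_multi}: let $G'$ be obtained from $G$ by deleting all non-principal vertices of $G_0$, so that $G_0$ is reduced to its two principal spokes $v_1s$ and $v_1s'$ (where $s,s'$ are the rim vertices shared with the neighbouring components), while every other component survives. Since the rim path of $G$ is now broken at the site of $G_0$, the vertex $v_1$ is a cutvertex of $G'-\overrightarrow{v_kv_1v_2}$; splitting there, applying Theorem~\ref{thm:zhu} on each side (deleting the boundary edges $v_1v_2$ and $v_1v_k$) and then Lemma~\ref{lem:l2} at $v_1$, yields a non-vanishing monomial $\eta' v_1^0v_2^0v_k^0\prod v_i^{\alpha_i}\prod u_j^{\beta_j}$ with $\alpha_i\le2$, $\beta_j\le4$ in $P(G'-\overrightarrow{v_kv_1v_2})$. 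Meanwhile the relevant single-wheel lemma applied to $G_0$ gives, in $P(G_0-\overrightarrow{s'v_1s}-e)$, a monomial with $v_1,s,s'$ all at exponent $0$, rim exponents $\le2$ and hub exponent $\le4$. The edge set of $G-\overrightarrow{v_kv_1v_2}-e$ is the disjoint union of the edge sets of $G'-\overrightarrow{v_kv_1v_2}$ (which keeps the spokes $v_1s,v_1s'$) and of $G_0-\overrightarrow{s'v_1s}-e$, so the polynomial factors as their product; and since the only shared vertices $v_1,s,s'$ carry exponent $0$ in the $G_0$-factor, the product of the two monomials cannot cancel, exactly as in the final case of Theorem~\ref{thm:smallcycle_0}. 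The resulting monomial has the required form, the shared exponents being $(\le2)+0\le2$.

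The remaining, and genuinely more delicate, case is when $e=v_1s$ is one of the identified principal spokes, shared by two consecutive components. Deleting $e$ together with the principal path splits the edge set into that of $G_L-\overrightarrow{sv_1v_2}$ and $G_R-\overrightarrow{v_kv_1s}$, where $G_L,G_R$ are the generalized wheels on the two sides of the spoke; they now meet only in $v_1$ and $s$, and crucially $s$ has degree $2$, so its exponent is automatically $\le2$ in any monomial. I would apply Theorem~\ref{thm:zhu} to $G_L$ (deleting the boundary edge $v_1v_2$) to obtain a monomial $v_1^0v_2^0s^a(\cdots)$, and the key point is that $a$ is forced to be $2$: the value $a=0$ is impossible because $v_1$ and $s$ are adjacent in $G_L-\overrightarrow{v_1v_2}$, while $a=1$ is impossible because, by the equivalences recorded after Theorem~\ref{thm:even_ok}, it would produce a monomial $v_1^0v_2^0s^0\prod v_i^{\le2}\prod u_j^{\le4}$ in $P(G_L-\overrightarrow{sv_1v_2})$, contradicting the non-$3$-extendability of the generalized wheel $G_L$ (Corollary~\ref{cor:no_000_broken}, Theorem~\ref{thm:even_ok}, or the multiple-wheel corollary preceding). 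Dividing out the reinstated edge $v_1s$ then gives a non-vanishing $v_1^0v_2^0s^1(\cdots)$ in $P(G_L-\overrightarrow{sv_1v_2})$, and symmetrically a non-vanishing $v_1^0v_k^0s^1(\cdots)$ in $P(G_R-\overrightarrow{v_kv_1s})$. Their product realises $s$ at exponent $2$, and since $s$ has degree $1$ in each factor the splitting $s^2=s^1\cdot s^1$ is the only one compatible with the degree bounds, so the decomposition of the target monomial is unique and no cancellation occurs.

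The hard part is precisely this last case: pinning down the exponent of the shared spoke endpoint and guaranteeing that the product of the two half-wheel monomials is non-vanishing. Everything there rests on combining the adjacency constraint, the equivalences after Theorem~\ref{thm:even_ok}, and the already-established non-$3$-path-extendability of generalized wheels; the degree-$2$ status of $s$ after deleting $e$ is what simultaneously keeps its exponent inside the allowed bound and forces the unique, cancellation-free factorisation. By contrast, the interior-edge case is routine once one recognises it as the structural analogue of Theorem~\ref{thm:even_multi}, with the single-wheel lemmas playing the role there taken by Theorem~\ref{thm:even_ok}.
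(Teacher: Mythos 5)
Your overall strategy coincides with the paper's: the interior-edge case is handled exactly as in Theorem~\ref{thm:even_multi}, with the two single-wheel lemmas playing the role of Theorem~\ref{thm:even_ok}, and the identified-principal-edge case by splitting along $e$, applying Theorem~\ref{thm:zhu} on each side, and multiplying; your derivation that the shared vertex $s$ enters each factor with exponent exactly $1$ (adjacency of $v_1,s$ rules out $a=0$, non-extendability of the side wheels rules out $a=1$) is in fact a more detailed version of the step the paper states tersely. The flaw is in your final non-cancellation step. The claim that $s$ has degree $2$ in $G-\overrightarrow{v_kv_1v_2}-e$, and degree $1$ in each factor, is false in general: it holds only when both components meeting at $s$ are broken wheels. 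If, say, the component of $G_L$ containing $s$ is an odd wheel, then in $G_L-\overrightarrow{sv_1v_2}$ the vertex $s$ is still adjacent to its rim neighbour and to the hub of that wheel, so it has degree $2$ in that factor (and degree up to $4$ in the union when both abutting components are odd wheels). Hence ``the splitting $s^2=s^1\cdot s^1$ is the only one compatible with the degree bounds'' is not justified as stated, and this is precisely the step you identify as the crux.

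The gap is repaired by material you already deployed: the alternative splittings $s^0\cdot s^2$ and $s^2\cdot s^0$ are excluded not by degree counts but by the same non-extendability statements (Theorem~\ref{thm:even_ok}, Corollary~\ref{cor:no_000_broken} and the corollary on multiple wheels) that you used to rule out $a=1$ --- a contribution with $s^0$ on the $G_L$ side would require a monomial $v_1^0v_2^0s^0\Pi v_i^{\alpha_i}\Pi u_j^{\beta_j}$, $\alpha_i\le 2$, $\beta_j\le 4$, in $P(G_L-\overrightarrow{sv_1v_2})$, which does not exist, and symmetrically on the $G_R$ side. This is exactly how the paper argues that $\eta\neq 0$. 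Alternatively, since each factor is a homogeneous polynomial and the exponent $0$ of $v_1$ can only split as $0+0$, the exponent of $s$ in each factor of any decomposition of the target monomial is forced, so the decomposition is unique without any appeal to degrees.
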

\begin{proof}
	If $e$ is an interior edge of any of the components, we may use a reasoning akin to the proof of Theorem~\ref{thm:even_multi}. If $e$ is one of the identified principal edges, let $v_p$ be the endpoint of that edge other than $v_1$. Edge $e=v_1v_p$ is a chord in $G-\overrightarrow{v_kv_1v_2}$, so we use this edge to split $G-\overrightarrow{v_kv_1v_2}$ into two graphs. In one of these graphs $e$ will still be present, so we remove it. The resulting graphs are $G_1$ and $G_2$, with $v_2 \in V(G_2)$. By Theorem~\ref{thm:zhu}, $P(G_1)$ and $P(G_2)$ contain monomials $\eta_1 v_1^0v_k^0v_p^1\Pi v_i^{\alpha_i}\Pi u_j^{\beta_j}$ and $\eta_2 v_1^0v_2^0v_p^1\Pi v_i^{\alpha_i}\Pi u_j^{\beta_j}$ with $\alpha_i \le 2, \beta_j \le 4$, respectively, so these combine into $\eta v_1^0v_2^0v_k^0v_p^2\Pi v_i^{\alpha_i}\Pi u_j^{\beta_j}, \alpha_i \le 2, \beta_j \le 4$ in $P(G-\overrightarrow{v_kv_1v_2}-e)$, and $\eta \neq 0$ as there was no monomial $\eta'_1 v_1^0v_k^0v_p^0\Pi v_i^{\alpha_i}\Pi u_j^{\beta_j}$ in $P(G_1)$ nor there was $\eta'_2 v_1^0v_2^0v_p^0\Pi v_i^{\alpha_i}\Pi u_j^{\beta_j}$ in $P(G_2)$.
\end{proof}
To close this section, we will prove the following theorem about the 3-path extendability of the class of graphs closely resembling ordinary wheels, but with the inner vertex "split in two".
\begin{theorem}\label{thm:lem3}
	Let $G$ be such that
	$$V(C(G)) = \lbrace v_1,\dots,v_k \rbrace,~~V(int(G))=\lbrace u,v \rbrace,$$
	and there is $3\le i\le k-1$ such that
	$$N_G(u)=\lbrace v,v_1,\dots,v_i \rbrace$$
	and
	$$N_G(v)=\lbrace u,v_i,\dots,v_k,v_1 \rbrace.$$
	There is a nonvanishing monomial $\eta v_1^0v_2^0v_k^0v^{\alpha_v}u^{\alpha_u} \Pi v_i^{\alpha_i}$ in $P(G-\overrightarrow{v_kv_1v_2})$ with $\alpha_i \le 2$ and $\alpha_v,\alpha_u\le 4$.
\end{theorem}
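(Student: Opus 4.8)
The plan is to strip off the principal vertex $v_1$ first and then exploit the two‑fan structure of what remains. After deleting the two principal edges, $v_1$ is adjacent only to the inner vertices $u,v$, so
$$P(G-\overrightarrow{v_kv_1v_2}) = \pm\, P(H)\,(v_1-u)(v_1-v),\qquad H:=G-v_1,$$
and the coefficient of $v_1^0$ is exactly $\pm\, uv\,P(H)$. Hence it suffices to produce in $P(H)$ a non‑vanishing monomial $\eta\, v_2^0 v_k^0 u^{\alpha_u}v^{\alpha_v}\prod v_i^{\alpha_i}$ with all rim exponents $\le 2$ and $\alpha_u,\alpha_v\le 3$: multiplying by $uv$ raises the two inner exponents to at most $4$ and gives the required monomial, and since the whole $v_1^0$‑sector is this one clean product, nothing can cancel at this stage.

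Next I would use the geometry of $H$. It is a $2$‑connected outerplanar near‑triangulation with outer cycle $v_2v_3\cdots v_kvu$, in which $v_i$ is adjacent to both $u$ and $v$. Deleting the interior edge $uv$ turns $v_i$ into a cutvertex and splits $H-uv$ into two broken wheels: $B_1$ with hub $u$ and rim $v_2\dots v_i$, and $B_2$ with hub $v$ and rim $v_i\dots v_k$ (so $|B_1|=i$ and $|B_2|=k-i+2$). By Lemma~\ref{lem:l2} the monomials of $P(H-uv)$ arise by gluing monomials of $P(B_1)$ and $P(B_2)$ at $v_i$ (the two $v_i$‑exponents add), and $P(H)=P(H-uv)(u-v)$. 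The task becomes: assemble in $P(H-uv)$ a monomial with $v_2^0$, $v_k^0$, all rim exponents $\le 2$, the two $v_i$‑exponents summing to at most $2$, and the two hub exponents small enough that the trailing factor $(u-v)$ keeps them $\le 3$.

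The construction is dictated by the parities of $|B_1|$ and $|B_2|$. If at least one fan has even order, then on that fan Lemma~\ref{lem:odd_wheels} supplies the ``both‑endpoints‑zero, hub‑cubed'' monomial (say $u^3v_2^0v_i^0\prod v^2$ on $B_1$), which I glue against the Lemma~\ref{lem:012} monomial $v^1v_k^0v_i^2\prod v^2$ on the other fan; the $v_i$‑exponents $0+2$ stay in budget, and taking $(u-v)$ on its $-v$ branch raises $v$ from $1$ to $2$ while the hub $u$ remains at $3$. The crux is the complementary case where both fans have odd order, i.e. $i$ odd and $k$ even; there the hub‑cubed monomial vanishes (cf.\ Lemmas~\ref{lem:no_030_even},~\ref{lem:even_wheels}) and zeroing both endpoints would cost hub exponent $4$. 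Instead I would build on each fan a monomial carrying $v_i^1$ with hub exponent $2$: factoring off the edge $uv_i$ gives $P(B_1)=P(B_1^-)(v_{i-1}-v_i)(u-v_i)$ with $B_1^-$ the hub‑$u$ fan on $v_2\dots v_{i-1}$ (now of even order, so Lemmas~\ref{lem:012} and~\ref{lem:odd_wheels} apply), whence the $v_i^1$‑coefficient equals $-P(B_1^-)(u+v_{i-1})$ and contributes $u^2v_2^0v_i^1\prod v^2$; the symmetric factorization of $vv_i$ yields $v^2v_k^0v_i^1\prod v^2$ on $B_2$. Gluing gives $v_i^{1+1}=v_i^2$ with hubs $u^2,v^2$, and $(u-v)$ then delivers $u^3v^2$ (or $u^2v^3$).

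The main obstacle I anticipate is non‑vanishing after the multiplication by $(u-v)$: the target monomial $T$ could in principle be cancelled by the opposite branch of $(u-v)$ acting on a neighbouring monomial of $P(H-uv)$. I would resolve this exactly as Lemmas~\ref{lem:no_030_even}--\ref{lem:odd_wheels} resolve their analogues, by a homogeneity count together with Theorem~\ref{thm:outer}: a would‑be cancelling partner forces a monomial on one of the fans with the wrong colouring parity, which Theorem~\ref{thm:outer} forbids. To keep this bookkeeping light I would route everything through the clean factorization of the $v_2^0v_k^0$‑sector, $[v_2^0v_k^0]P(H)=uv\,v_3v_{k-1}\,P(H-v_2-v_k)$, which peels the two outermost rim vertices with no cancellation whatsoever and reduces the whole statement to exhibiting a single monomial of the smaller polynomial $P(H-v_2-v_k)$ with exponent $\le 1$ on its new boundary vertices $v_3,v_{k-1}$ — a sub‑problem of the same shape that the fan lemmas of Section~\ref{sec:sec3} handle directly.
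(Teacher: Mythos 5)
Your proof is correct, but it follows a genuinely different and much heavier route than the paper's --- and your closing paragraph in fact rediscovers the paper's argument. The paper peels all three principal vertices at once: in $G-\overrightarrow{v_kv_1v_2}$ each of $v_1,v_2,v_k$ has exactly two neighbours ($v_1\sim u,v$; $v_2\sim v_3,u$; $v_k\sim v_{k-1},v$), so the $v_1^0v_2^0v_k^0$-sector of $P(G-\overrightarrow{v_kv_1v_2})$ is exactly $\pm\, u^2v^2v_3v_{k-1}P(G')$ with $G'=G-\{v_1,v_2,v_k\}$, with no possibility of cancellation; then Theorem~\ref{thm:outer}, applied to the outerplanar near-triangulation $G'$ with $x=v_3$, $y=v_{k-1}$, supplies in \emph{every one} of its three cases a non-vanishing monomial with exponents $\le 1$ on $v_3,v_{k-1}$ and $\le 2$ on everything else, including $u$ and $v$; multiplying back finishes in three lines. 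Your main route --- peel only $v_1$, factor $P(H)=P(B_1)P(B_2)(u-v)$ through the cutvertex $v_i$ of $H-uv$, split into parity cases, and control the cancellations created by the trailing factor $(u-v)$ --- does close: I checked that in both parity cases the would-be cancelling partners are excluded by homogeneity together with Theorem~\ref{thm:outer} (case (i) forbids monomials of endpoint type $(0,1)$ on an even-order fan) and Lemma~\ref{lem:no_030_even} (no hub-cubed, endpoints-zero monomial on an odd-order fan), exactly as you predict. What the paper's decomposition buys is the avoidance of all of this: since the edge $uv$ stays inside the outerplanar graph handed to Theorem~\ref{thm:outer}, no assembled monomial is ever multiplied by $(u-v)$, so no parity distinction and no cancellation analysis ever arise; what your route buys is more explicit information about which hub carries the exponent $3$, which the theorem does not require. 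One inaccuracy in your final paragraph: after your reduction $[v_1^0v_2^0v_k^0]P(G-\overrightarrow{v_kv_1v_2})=\pm\, u^2v^2v_3v_{k-1}P(G')$, the residual task is \emph{not} one that ``the fan lemmas of Section~\ref{sec:sec3} handle directly'' --- $G'$ is not a fan, as it has the two hubs $u$ and $v$ --- so you must either re-run your two-fan gluing inside a now tighter hub budget of $2$, or invoke Theorem~\ref{thm:outer} for general outerplanar graphs; the latter is precisely the paper's proof.
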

\begin{proof}
	To begin with, notice that $G'=G-\lbrace v_1, v_2, v_k \rbrace$ is an outerplanar near triangulation with $deg(v_3)=deg(v_{k-1})=2$, hence there is a non-vanishing monomial $\eta v_3^{\alpha_3}v_{k-1}^{\alpha_{k-1}}v^{\alpha_v}u^{\alpha_u} \Pi v_i^{\alpha_i}$ in $P(G')$ with $\alpha_3, \alpha_{k-1} \le 1$ and $\alpha_i, \alpha_v,\alpha_u \le 2$. As
	$$N_{G-\overrightarrow{v_kv_1v_2}}(v_1)=\lbrace v,u \rbrace,$$
	$$N_{G-\overrightarrow{v_kv_1v_2}}(v_2)=\lbrace v_3,u \rbrace$$
	and
	$$N_{G-\overrightarrow{v_kv_1v_2}}(v_k)=\lbrace v_{k-1},v \rbrace,$$
	in $P(G-\overrightarrow{v_kv_1v_2})$ there is a non-vanishing monomial $\eta v_1^0v_2^0v_k^0v_3^{\alpha_3+1}v_{k-1}^{\alpha_{k-1}+1}v^{\alpha_v+2}u^{\alpha_u+2} \Pi v_i^{\alpha_i}$, which is enough to prove the theorem.
\end{proof}

\begin{figure}[htb]
	\begin{center}
		\includegraphics[scale = 0.55]{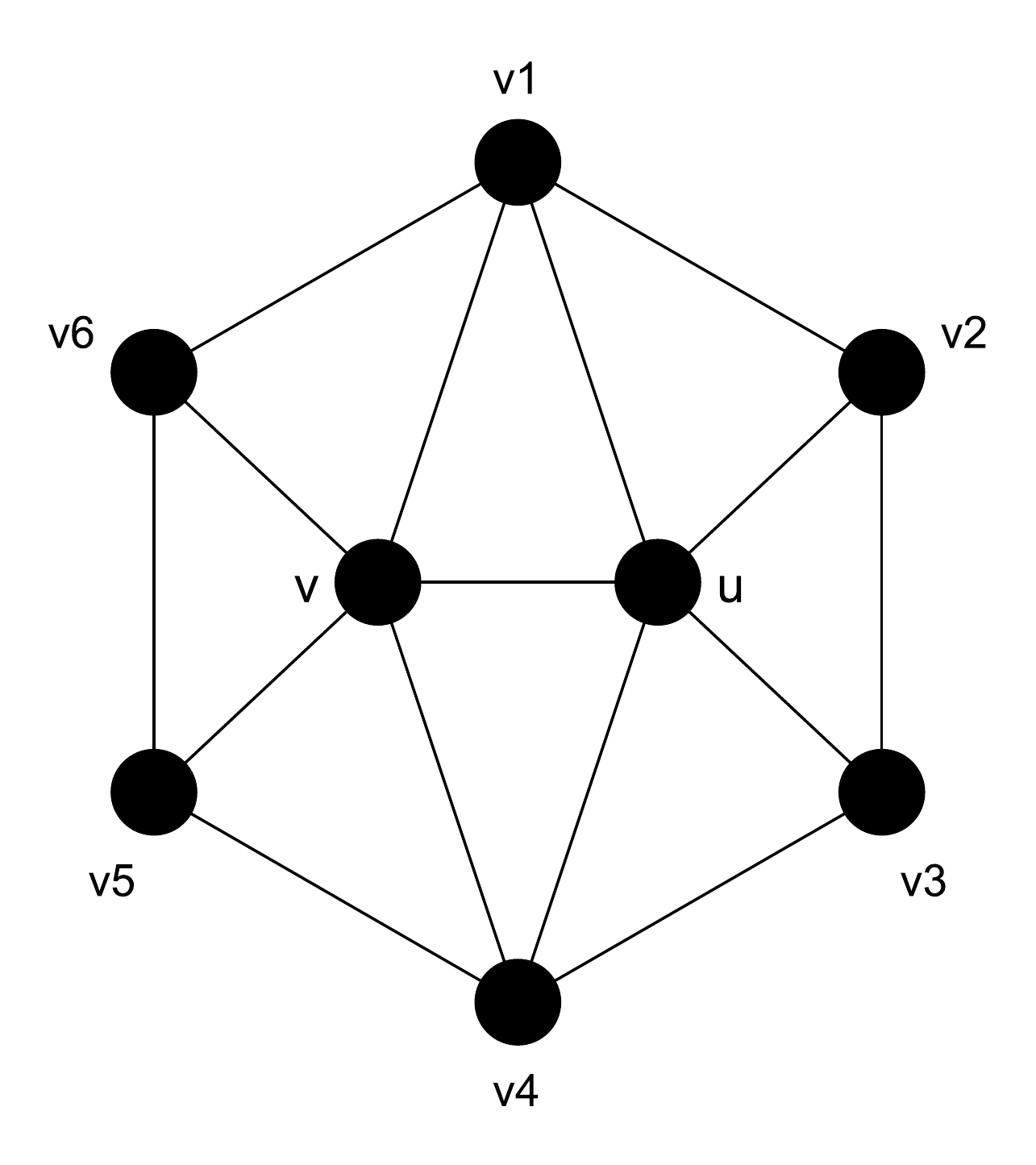}
	\end{center}
	\caption{An example of a graph to which Theorem~\ref{thm:lem3} applies.} \label{fig:lem3}
\end{figure}

\section{Main theorem}

We have now in our hands all the tools needed to prove the following polynomial analogue of Theorem~\ref{thm:thomassenExt}, which is the main result of this paper.
We have now in our hands all the tools needed to prove the following strengthening of Theorem~\ref{thm:thomassenExt} in language of graph polynomials, which is the main result of this chapter.
\begin{theorem} \label{thm:main}
	Let $G$ be a plane near-triangulation with outer cycle $C=(v_1,v_2, \dots, v_k,v_1)$ and interior vertices $u_1, \dots, u_l$. Then in $P(G-\overrightarrow{v_kv_1v_2})$ there is a non-vanishing monomial $\eta v_1^0v_2^0v_k^0\Pi v_i^{\alpha_i}\Pi u_j^{\beta_j}$ with $\alpha_i \le 2$ and $\beta_j \le 4$, unless $G$ contains a subgraph $G'$ which is a generalized wheel whose principal path
	is $\overrightarrow{v_kv_1v_2}$, and all vertices on the outer cycle of $G'$ are on $C$. 
\end{theorem}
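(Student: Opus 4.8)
The plan is to prove Theorem~\ref{thm:main} by strong induction on $|V(G)|$, mirroring the case analysis in Thomassen's proof of Theorem~\ref{thm:thomassenExt} while maintaining at every step the dichotomy \emph{either the prescribed monomial exists in $P(G-\overrightarrow{v_kv_1v_2})$, or $G$ contains a generalized wheel with principal path $\overrightarrow{v_kv_1v_2}$ and outer cycle on $C$}. The base case is $k=3$: here the edge $v_2v_k$ survives in $G-\overrightarrow{v_kv_1v_2}$, so its factor $(v_2-v_k)$ forbids any monomial with $v_2^0v_k^0$; but the boundary triangle $v_1v_2v_k$ is itself a broken wheel with principal path $\overrightarrow{v_kv_1v_2}$ and outer cycle on $C$, so the exceptional clause holds automatically. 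Thus I may assume $k\ge 4$, in which case $v_2$ and $v_k$ are non-adjacent and the target monomial becomes attainable.

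First I would eliminate chords of $C$. If there is a chord $v_1v_j$ at the apex, it plays the role of a gluing edge of a multiple wheel: split $G$ along it into $G_A\ni v_2$ and $G_B\ni v_k$, meeting exactly in $\{v_1,v_j\}$ and the edge $v_1v_j$. Apply the induction hypothesis to $G_A$ with principal path $\overrightarrow{v_jv_1v_2}$ and to $G_B$ with $\overrightarrow{v_kv_1v_j}$. If, say, $G_B$ yields a monomial (so $v_1^0v_j^0v_k^0$ occurs), then Theorem~\ref{thm:zhu} applied to $G_A-v_1v_2$ gives $v_1^0v_2^0$ with $v_j^{\le 2}$, and Corollary~\ref{cor:00_2} (with $x=v_1$, $y=v_j$, the edge $v_1v_j$ present on the $G_A$-side only) combines them into the required monomial for $G$; the case where $G_A$ yields a monomial is symmetric. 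If both sides are generalized wheels, gluing them at $v_1$ along $v_1v_j$ produces a generalized wheel for $G$ with principal path $\overrightarrow{v_kv_1v_2}$ and outer cycle on $C$, and (having no even component, since neither side does) it is the desired obstruction. If instead there is a chord $v_av_b$ with $v_1\notin\{v_a,v_b\}$, let $G_2$ be the side not containing $v_1$ and $G_1$ the other; since no wheel component through $v_1$ can cross the separating chord, $G$ contains the obstruction iff $G_1$ does, and when $G_1$ is not obstructed I would absorb $G_2$ using $P(G-\overrightarrow{v_kv_1v_2})=P(G_1-\overrightarrow{v_kv_1v_2})\cdot P(G_2-v_av_b)$: multiplying the inductive monomial of $G_1$ (which has $v_a^{\le 2},v_b^{\le 2}$) by the Theorem~\ref{thm:zhu} monomial $v_a^0v_b^0\cdots$ of $G_2-v_av_b$ is non-vanishing because $P(G_2-v_av_b)$ is homogeneous, which pins the factorisation, and it keeps the new boundary vertices at exponent $\le 2$ and the new interior vertices at $\le 4$.

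The core of the argument is the chordless case. Writing the neighbours of $v_1$ as $v_2=w_0,w_1,\dots,w_{t-1},w_t=v_k$ with $w_1,\dots,w_{t-1}$ interior, I would use the factorisation
\[ P\bigl(G-\overrightarrow{v_kv_1v_2}\bigr)=P(G-v_1)\prod_{i=1}^{t-1}(v_1-w_i), \]
whose $v_1^0$-part equals $(-1)^{t-1}\,w_1\cdots w_{t-1}\,P(G-v_1)$. Hence the prescribed monomial exists precisely when $P(G-v_1)$ contains a monomial $v_2^0v_k^0\prod w_i^{\gamma_i}\cdots$ with $\gamma_i\le 3$ on the exposed neighbours, $\le 2$ on the remaining boundary and $\le 4$ on the interior. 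This reduces the question to a two–precoloured–vertex problem on $G-v_1$ for the non-adjacent pair $v_2,v_k$, whose answer is governed by the fan/wheel structure at $v_1$: the lemmas of Section~\ref{sec:sec3} (Lemma~\ref{lem:odd_wheels}, Lemma~\ref{lem:even_wheels}, Corollary~\ref{cor:all_121_odd}) and Theorem~\ref{thm:even_ok} describe exactly when such a monomial is available, Theorem~\ref{thm:lem3} handles the configuration in which the hub is split into two adjacent interior vertices, and Theorem~\ref{thm:even_multi} together with Corollary~\ref{cor:gen-e} cover the presence of an even wheel component or of a deletable interior edge. The decisive invariant is parity: an even wheel, or any even component, renders $v_2$ and $v_k$ independently colourable and delivers the monomial, whereas a configuration built solely from odd and broken wheels forces the generalized-wheel obstruction.

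The step I expect to be the main obstacle is exactly this chordless case, for two intertwined reasons. First, deleting $v_1$ promotes its interior neighbours $w_1,\dots,w_{t-1}$ to the outer boundary of $G-v_1$, so a generalized wheel produced by the induction hypothesis on $G-v_1$ may have its outer cycle running through these exposed vertices, which are \emph{not} on $C$; one must argue that any genuine obstruction decomposes into wheel components none of which can use the exposed region (because each component's rim vertices attach to $v_1$ through a hub that planarity confines to one side), so that the obstruction can be re-expressed as a generalized wheel with outer cycle on $C$. Second, reassembling the monomial across the fan requires controlling cancellations, and this is precisely where the fine description of the odd-wheel polynomial in Lemma~\ref{lem:ordinary2k1}, and the sign data (the $|\eta|=1$ statements) in Lemma~\ref{lem:odd_wheels}, Lemma~\ref{lem:even_wheels} and Corollary~\ref{cor:all_121_odd}, are indispensable: they certify that the candidate monomials do not vanish and that the parity-forbidden ones genuinely do. Matching these structural and algebraic threads so that the induction closes is the heart of the proof.
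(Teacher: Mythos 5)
Your proposal sets up the same outer frame as the paper --- induction/minimal counterexample, elimination of chords of $C$ by splitting along the chord (your chord-at-$v_1$ case, gluing two obstructions into a multiple wheel or combining monomials via Corollary~\ref{cor:00_2}, is essentially the paper's Claim ``$C(G)$ has no chord'') --- but the chordless case, which you yourself identify as ``the heart of the proof,'' is not actually proved, and the reduction you propose for it does not close the induction. After factoring $P(G-\overrightarrow{v_kv_1v_2})=\pm P(G-v_1)\prod_i(v_1-w_i)$ and extracting the $v_1^0$-part, you need a monomial of $P(G-v_1)$ with $v_2^0v_k^0$, exponent at most $3$ on the exposed fan vertices $w_i$, at most $2$ on the rest of the boundary, and at most $4$ inside. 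This is a \emph{two non-adjacent precoloured boundary vertices} problem on $G-v_1$: it is not an instance of the theorem being proved (which requires the three precoloured vertices to be consecutive on the outer cycle), and it is not covered by the lemmas you invoke --- Lemmas~\ref{lem:odd_wheels}, \ref{lem:even_wheels}, Corollary~\ref{cor:all_121_odd} and Theorem~\ref{thm:lem3} apply only to outerplanar near-triangulations with a universal vertex (i.e.\ essentially when $G$ itself is a wheel or a wheel with split hub), not to a general chordless near-triangulation $G-v_1$, whose boundary now mixes vertices of $C$ with the exposed interior neighbours of $v_1$. Statements of this ``two distant precoloured vertices'' type are exactly the hard Postle--Thomas style results, and no such tool exists in the paper; so the step ``the lemmas of Section~\ref{sec:sec3} describe exactly when such a monomial is available'' is an unsupported claim, not a proof.

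The paper's actual route through the chordless case is quite different and none of it appears in your proposal: it never deletes $v_1$, but instead proves that a minimal counterexample has no separating triangle or $4$-cycle (via Theorem~\ref{thm:smallcycle_0}), establishes the existence of $u$-special monomials (Claim~\ref{lem:existence}), and then proves the crucial structural Claim~\ref{claim:c3} --- an interior vertex adjacent to $v_i$ and $v_j$ is adjacent to every intermediate $v_r$ --- whose proof is the technical core, requiring the explicit cancellation data of Lemma~\ref{lem:ordinary2k1} and Observation~\ref{lem:broken5} together with an exchange argument that replaces a hypothetical odd-wheel piece by a broken wheel on $5$ vertices to contradict minimality. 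Claims~\ref{claim:c4}--\ref{claim:c7} then force the rigid configuration of Figure~\ref{fig:final}, which is killed by an explicit computation using Lemma~\ref{lem:final}. Your proposal contains no substitute for Claim~\ref{claim:c3}, no control of separating cycles, and no mechanism for converting an obstruction of a smaller graph whose boundary leaves $C$ back into a generalized wheel on $C$; these gaps are precisely where the difficulty of the theorem lives.
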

\begin{proof}
	To prove this theorem suppose that the assertion is false, and let $G$ be the minimal counterexample. As for $k \le 5$ the theorem follows from Theorem~\ref{thm:smallcycle}, we may assume that $k > 5$. We also assume that $v_1, v_2$ and $v_k$ do not form a triangle in $G$ (i.e. $v_2 \nsim v_k$), as this would automatically rule out the existence of the monomial specified in the assertion (the triangle may also be viewed as a degenerate case of a broken wheel). Like in~\cite{ThomassenExt}, we will prove the theorem via a series of claims about the structure of $G$, largely identical to the ones in~\cite{ThomassenExt}.
	\begin{claim}\label{claim:c1}
		$C(G)$ has no chord.
	\end{claim}
	\begin{proof}
		Suppose $C(G)$ has a chord, and that the chord is not incident to $v_1$. Then we may cut $G$ along that chord, forming graphs $G', G''$, with $G'$ being the one containing $v_1, v_2$ and $v_k$ and $G''$ does not contain the edge that formed the chord in $G$. As $G$ is a minimal counterexample, there is a monomial in $P(G'-\overrightarrow{v_kv_1v_2})$ that adheres to the assertion, and by Theorem~\ref{thm:zhu} we can find suitable monomial in $P(G'')$ so that the exponents of the endpoints of the chords are both 0. Therefore in $P(G-\overrightarrow{v_kv_1v_2})$ there is a monomial $v_1^0v_2^0v_k^0\Pi v_i^{\alpha_i}\Pi u_j^{\beta_j}$ with $\alpha_i \le 2$ and $\beta_j \le 4$.
		
		Suppose now that there is a chord in $C(G)$ that is incident to $v_1$. Let $v_i$ be the other endpoint of that chord. Again split $G$ into $G'$ and $G''$ along the chord, in the way that $v_k \in V(G')$ and $v_2 \in V(G'')$. Notice, that as there is no generalized wheel with principal path $\overrightarrow{v_kv_1v_2}$, either there is no generalized wheel with principal path $\overrightarrow{v_kv_1v_i}$ in $G'$ or there is no generalized wheel with principal path $\overrightarrow{v_iv_1v_2}$ in $G''$. Without loss of generality we can assume the former. Then by minimality of $G$ there is a valid monomial with $v_1^0v_i^0v_k^0$ in $P(G'-\overrightarrow{v_kv_1v_i})$ and by Theorem~\ref{thm:zhu} there is a suitable monomial with $v_1^0v_2^0v_i^2$ in $P(G''-v_1v_2)$. Hence there is a monomial $v_1^0v_2^0v_k^0\Pi v_i^{\alpha_i}\Pi u_j^{\beta_j}$ with $\alpha_i \le 2$ and $\beta_j \le 4$ in $P(G-\overrightarrow{v_kv_1v_2})$, which finishes the proof.
	\end{proof}
	\begin{claim}\label{claim:c2}
		$G$ has no separating triangle and no separating $4$-cycle.
	\end{claim}
	\begin{proof}
		Assume that $G$ has a separating triangle $T$. By minimality of $G$, if we remove all vertices from the interior of $T$, the polynomial of the resulting graph will contain the requested monomial. By Theorem~\ref{thm:smallcycle_0}, the monomial of the removed subgraph contains a monomial such that the exponents of the vertices of the triangle are equal to 0, while all other are no larger than 4. Therefore, adding the interior of $T$ back, we can construct appropriate monomial in $P(G)$. Suppose now that there is a separating $4$-cycle $S$ in $G$. Remove from $G$ all the vertices and edges from the interior of $S$, and replace it with single edge $e$ (so that the resulting graph $G'$ is still a near-triangulation). Notice that if in  $P(G')$ there is a necessary monomial, then if we delete $e$ from $G'$ and add back the interior of $S$, then by Theorem~\ref{thm:smallcycle_0} there is a suitable monomial in $P(G)$. Suppose then, that there is no such monomial in $P(G')$. Then by the induction hypothesis $G'$ contains a generalized wheel. As we assume that $G$ does not contain a generalized wheel, this wheel needs to contain $e$. By Corollary~\ref{cor:gen-e} and Theorem~\ref{thm:zhu}, $P(G'-e)$ has a suitable monomial. Again it suffices to add back the interior of $S$ and apply Theorem~\ref{thm:smallcycle_0} to find a necessary monomial in $P(G)$ to finish the proof.
	\end{proof}
Here we will slightly deviate from the path followed by Thomassen in~\cite{ThomassenExt}. The reason is that there is no possibility to obtain a graph polynomial analogue of~\cite[Lemma 1]{ThomassenExt} --- graph polynomials either guarantee the existence of list-colouring, or give no information about the possibilities of extending the colourings. We therefore have to prove an additional claim.
\begin{claim} \label{lem:existence}
	Suppose $u$ is a vertex in $G \setminus C(G)$ which is joined to both of $v_i$, $v_j$ where $2 \le i \le j-2 \le k-2$ and let $G' = Int(v_j, v_{j+1}, \dots v_k, v_1, v_2, \dots, v_i, u, v_j)$. Then in $P(G'-\overrightarrow{v_kv_1v_2})$ there is a non-vanishing monomial $\eta v_1^0v_2^0v_k^0\Pi v_i^{\alpha_i}u^{\beta_u} \Pi u_j^{\beta_j}$ with $\alpha_i \le 2,\beta_u \le 3$ and $\beta_j \le 4$.
\end{claim}
We call each monomial existing by the above Claim a \emph{$u$-special} monomial.
\begin{proof}
	If $G'$ does not contain a generalized wheel as given in the assertion of the theorem, then $P(G')$ contains a non-vanishing monomial
	$$\eta v_1^0v_2^0v_k^0\prod v_i^{\alpha_i}u^{\beta_u} \prod u_j^{\beta_j}$$ with
	$$\alpha_i,\beta_u \le 2$$
	and
	$$\beta_j \le 4.$$
	We therefore assume that $G'$ contains such a wheel as a certain subgraph. Observe also, that as $C(G)$ had no chords, then any chord of $C(G')$ needs to be incident to $u$.
	
	Suppose at first that $C(G')$ has no chord. Then by the previous two claims it is an odd wheel. Let $u_0$ be the inner vertex of that wheel. As $G'-v_1$ is an outerplanar near-triangulation satisfying the conditions of Lemma~\ref{lem:030}, its polynomial contains a non-vanishing monomial $\eta v_2^0v_k^0\Pi v_i^2 u^3 u_0^2$. Now, $G'-\overrightarrow{v_kv_1v_2}$ can be constructed by taking $G'-v_1$ and adjoining $v_1$ to $u_0$, hence $P(G'-\overrightarrow{v_kv_1v_2})$ contains a non-vanishing monomial $\pm \eta v_1^0v_2^0v_k^0\Pi v_i^2 u^3 u_0^3$.
	
	If $v_1 \sim u$, then we may split $G'-\overrightarrow{v_kv_1v_2}$ by the chord $v_1u$ into $G'_1$ and $G'_2$, with $v_1u \in E(G'_1)$. By Theorem~\ref{thm:zhu}, $P(G'_1)$ contains a non-vanishing monomial $\eta_1 v_1^0v_k^0u^2\Pi v_i^{\alpha_i} \Pi u_j^{\beta_j}$ with $\alpha_i \le 2$ and $\beta_j \le 4$, while $P(G'_2)$ contains a non-vanishing monomial $\eta_1 v_1^0v_2^0u^1\Pi v_i^{\alpha_i} \Pi u_j^{\beta_j}$ with $\alpha_i \le 2$ and $\beta_j \le 4$. Hence in $P(G'-\overrightarrow{v_kv_1v_2})$ there is a monomial $\eta_1 v_1^0v_2^0v_k^0u^3\Pi v_i^{\alpha_i} \Pi u_j^{\beta_j}$ with $\alpha_i \le 2$ and $\beta_j \le 4$, and it does not vanish by Corollary~\ref{cor:00_2}.
	
	The final case is when $v_1 \nsim u$, and $C(G')$ has at least one chord. Let $p$ be the lowest such that $v_p \sim u$ and $q$ be the largest that $v_q \sim u$ (possibly $p=i$ or $q=j$). Now consider:
	$$G''=Int(v_q, \dots v_k, v_1, v_2, \dots, v_p, u, v_q).$$
	This has no chords, so going back to the first case we already know that $P(G''-\overrightarrow{v_kv_1v_2})$ contains a non-vanishing monomial $\eta v_1^0v_2^0v_k^0\Pi v_i^2 u^3 u_0^3$. It only remains to reconstruct $G'$ by adding the parts that were cut out by the chords (one of them possibly empty) using Theorem~\ref{thm:zhu}.
\end{proof}
\begin{claim} \label{claim:c3}
	If $u$ is a vertex in $G \setminus C(G)$ which is joined to both of $v_i$, $v_j$ where $2 \le i \le j-2 \le k-2$ then $u$ is joined to each of $v_i, v_{i+1}, \dots v_j$.
\end{claim}
\begin{proof}
	Consider a split of $G$ into
	$$G' = Int(v_j, v_{j+1}, \dots v_k, v_1, v_2, \dots, v_i, u, v_j)$$
	and
	$$H' = Int(u, v_i, v_{i+1}, \dots v_j, u).$$
	As $H'$ is not a counterexample to Theorem~\ref{thm:main}, by Lemma~\ref{lem:existence} $H'$ contains a generalized wheel with principal path $\overrightarrow{v_j u v_i}$ with all vertices on the cycle $(u, v_i, v_{i+1}, \dots v_j,u)$.
	
	We show, that $H'$ is a broken wheel. Suppose otherwise, let $p, q$ be such that $i \le p \le q - 2 \le j-2$ and there is no edge between $u$ and $v_r$ for $p < r < q$. Then consider a split of $G$ into
	$$G'' = Int(v_q, v_{q+1}, \dots v_k, v_1, v_2, \dots, v_p, u, v_q)$$
	and
	$$H = Int(u, v_p, v_{p+1}, \dots v_q,u).$$
	Again, $H$ is not a counterexample to Theorem~\ref{thm:main}. Hence, by Claim~\ref{lem:existence} $H$ contains a generalized wheel with principal path $\overrightarrow{v_q u v_p}$ with all vertices on the cycle $(u, v_p, v_{p+1}, \dots v_q,u)$. But this means, by the minimality of $G$ and Theorem~\ref{thm:smallcycle}, that $H$ itself is an odd wheel.
	
	Because $G$ is a counterexample to Theorem \ref{thm:main}, then in the polynomial $$P(G-\overrightarrow{v_kv_1v_2}) = P(G''-\overrightarrow{v_kv_1v_2}) P(H - \overrightarrow{v_q u v_p})$$ each monomial of the form $\eta v_1^0v_2^0v_k^0\Pi v_i^{\alpha_i}\Pi u_j^{\beta_j}$ with $\alpha_i \le 2$, $\beta_j \le 4$ vanishes. But this means that the polynomial $P(H - \overrightarrow{v_q u v_p})$ cancels (or extends its exponents too much) every $u$-special monomial of $P(G''-\overrightarrow{v_kv_1v_2})$ existing by Claim~\ref{lem:existence}. 
	
	Using Lemma~\ref{lem:ordinary2k1} one may notice which monomials can be cancelled. At first, to be cancelled by monomials in $P(H - \overrightarrow{v_q u v_p})$ having 1 as a sum of exponents for $v_q, u$ and $v_p$ the polynomial of all $u$-special monomials of $P(G''-\overrightarrow{v_kv_1v_2})$ is of the form
	$$ S_u = \sum_{\alpha, \beta, \gamma} (u^\alpha v_p^\beta v_q^\gamma - u^{\alpha+1} v_p^{\beta-1} v_q^\gamma - u^{\alpha+1} v_p^\beta v_q^{\gamma-1}) Q(\alpha, \beta, \gamma),$$
	where the sum is over some $\alpha \le 3$, $1 \le \beta, \gamma \le 2$, while $Q(\alpha, \beta, \gamma)$ is a polynomial composed of monomials having the form
	$$v_k^0 v_1^0 v_2^0 \prod_{2 < i < p} v_i^{\alpha_i} \prod_{q <i < k} v_i^{\alpha_i} \prod_{u_j \neq u} u_j^{\beta_j},$$
	with $\alpha_i \le 2$, $\beta_j \le 4$. 
	
	Now notice, that monomials with $\alpha \le 2$ or $\beta + \gamma \le 3$ are not cancelable by monomials of $P(H - \overrightarrow{v_q u v_p})$ having 2 as a sum of exponents for $v_q, u$ and $v_p$, hence the sum in $S_u$ has only one summand --- for $\alpha = 3$, $\beta = \gamma = 2$. But, by Observation~\ref{lem:broken5}, a broken wheel on 5 vertices cancels (or extend its exponents too much) all the monomials from $S_u$ as well. Moreover, it has fewer vertices. Hence, by exchanging graph $H$ by a broken wheel on 5 vertices one may obtain a smaller counterexample, and this contradiction concludes the proof. 
\end{proof}
\begin{claim}\label{claim:c4}
	There is no interior vertex $u_j$ neighboring both $v_2$ and $v_k$.
\end{claim}
\begin{proof}
	Suppose there is a vertex $u$ neighboring both $v_2$ and $v_k$. As we established that $v_2$ cannot neighbor $v_k$ and $G$ has no separating $4$-cycle, $u$ has to neighbor $v_1$ as well. By the previous claim, $u$ also neighbors all vertices $v_i, 3 \le i < k$. Therefore $G$ forms an ordinary wheel. By the assumption, this wheel cannot be an odd wheel, hence it's an even wheel. But even wheels contain an appropriate monomial in their polynomials, so we reach a contradiction.
\end{proof}
\begin{claim}\label{claim:c56}
	Both $v_3$ and $v_{k-1}$ have degree at least $4$.
\end{claim}
\begin{proof}
	We will prove the claim only for $v_3$, as the case of $v_{k-1}$ is symmetric.
	
	Suppose $deg(v_3)<4$. As $C(G)$ has no chord, $deg(v_3)=3$. Let
	$$G'=G-\overrightarrow{v_kv_1v_2}$$
	and let $u$ be the third neighbor of $v_3$, apart from $v_2$ and $v_4$. We have two cases to cover:
	
	\textit{Case 1. G'$-v_3$ fulfills the conditions of the theorem: }As $G'-v_3$ fulfills the conditions of the theorem, there is a monomial $\eta v_1^0v_2^0v_k^0u^{\alpha}\Pi v_i^{\alpha_i}\Pi u_j^{\beta_j}$ with $\alpha,\alpha_i \le 2$ and $\beta_j \le 4$ in $P(G'-v_3)$. We look for monomial $\eta v_1^0v_2^0v_k^0\Pi v_i^{\alpha_i}\Pi u_j^{\beta_j}$ with $\alpha_i \le 2$ and $\beta_j \le 4$ in $P(G')$ Obviously, $$P(G')=P(G'-v_3)(v_3-v_2)(v_3-v_4)(v_3-u)=$$
	$$=P(G'-v_3)(v_3^3-v_2v_3^2-v_3^2v_4-v_3^2u+v_3v_4u+v_2v_3u+v_2v_3v_4-v_2v_4u)=$$ 
	$$=Q(G')-P(G'-v_3)v_3^2v_4-P(G'-v_3)v_3^2u+P(G'-v_3)v_3v_4u.$$
	The 5 monomials that $Q(G')$ comprises of are not suitable to produce a necessary monomial as they have either $v_3$ in power $3$, or $v_2$ in power higher than $0$. Of course, we can find a suitable monomial in $P(G'-v_3)v_3^2u$, therefore it occurs in $P(G')$. If it does not vanish, then the case is done. If it does vanish, then in $P(G'-v_3)$ there is a monomial $-\eta v_1^0v_2^0v_k^0u^{\alpha+1}v_4^{\alpha_4-1}\Pi v_i^{\alpha_i}\Pi u_j^{\beta_j}$, and the monomial we found in $P(G'-v_3)v_3^2u$ is also in $P(G'-v_3)v_3^2v_4$, with opposite sign. But as in $P(G-v_3)$ there is a monomial
	$$-\eta v_1^0v_2^0v_k^0u^{\alpha+1}v_4^{\alpha_4-1}\prod v_i^{\alpha_i}\prod u_j^{\beta_j},$$ we can also find a monomial that satisfies the condition in $P(G'-v_3)v_3v_4u$, and as it is the only section of $P(G')$ that contains monomials with $v_3$ in power $1$, the monomial does not vanish in $P(G')$, and the case is finished.
	
	\textit{Case 2. G'$-v_3$ does not fulfill the conditions of the theorem: } If so, then there is a generalized wheel (with principal path $\overrightarrow{v_kv_1v_2}$) in $G'-v_3$. Let $q$ be the largest such that $v_q \sim u$. By Claim~\ref{claim:c3}, $u$ also neighbors all $v_i$, $4 \le i < q$. As $C(G)$ has no chords, there is a vertex $u_0$ in the interior of $C(G')$ that neighbors $u, v_1$ and all $v_i, q \le i \le k$. By the preceding claim we have that $v_2 \nsim u_0$, therefore $v_1 \sim u$. As a result, we see that $G$ adheres to the assumptions of Theorem~\ref{thm:lem3}, hence $P(G)$ contains a necessary monomial.
\end{proof}
\begin{claim}\label{claim:c7}
	There is a vertex $u$ in $int(C)$ that neighbors both $v_3$ and $v_{k-1}$.
\end{claim}
\begin{proof}
	Suppose otherwise. Take
	$$G'=G-\overrightarrow{v_kv_1v_2}-v_3,$$
	let $u_1,\dots,u_m$ be the interior neighbors of $v_3$ from $G$ in counterclockwise order (by previous claim $m \ge 2$) and $u_i, i>m$ be the interior vertices of $G'$. Observe that as $C(G)$ had no chords, if $G'$ contained a generalized wheel as in the assertion of the theorem, it would be either an odd wheel, or a chain of wheels, with the part containing $v_{k-1}$ also being an odd wheel. Therefore, $v_{k-1}$ would necessary have degree equal to $3$, and that would contradict the preceding claim. Hence $G'$ has no such generalized wheel, and as a result, in $P(G')$ there is a non-vanishing monomial
	$$\eta v_1^0v_2^0v_k^0 \prod v_i^{\alpha_i} \prod_{j=1}^{m} u_j^{\beta_j} \prod_{j>m} u_j^{\gamma_j},$$
	with $\alpha_i,\beta_j \le 2$ and $\gamma_j \le 4$. Now we attach $v_3$ back to $v_2,u_1,\dots,u_m,v_4$. The graph we added to $G'$ is $S_{m+2}$ and we know its polynomial contains a non vanishing monomial $v_2^0v_3^2v_4^0 \Pi u_i^1$ (we choose a suitable ordering of vertices in $S_{m+2}$). This monomial, combined with the monomial specified earlier, gives us a monomial
	$$\eta v_1^0v_2^0v_k^0 \prod v_i^{\alpha_i} \prod_{j=1}^{m} u_j^{\beta_j} \prod_{j>m} u_j^{\gamma_j},$$
	with $\alpha_i \le 2$, $\beta_j \le 3$ and $\gamma_j \le 4$, in $P(G-\overrightarrow{v_kv_1v_2})$. If this monomial does not vanish in $P(G-\overrightarrow{v_kv_1v_2})$, then we reach the contradiction and the proof is complete. If it does vanish, in $P(G')$ there is a non-vanishing monomial
	$$\eta v_1^0v_2^0v_k^0 \prod v_i^{\alpha_i} \prod_{j=1}^{m} u_j^{\beta_j} \prod_{j>m} u_j^{\gamma_j},$$
	with
	$$\alpha_4 \le 1, \alpha_i \le 2, \beta_j \le 3, \gamma_j \le 4.$$
	There is a monomial $-v_2^0v_3^1v_4^1 \Pi u_i^1$ in $P(S_{m+2})$, and this is the only monomial there with $v_2^0v_3^1$. Hence there is a monomial $-\eta v_1^0v_2^0v_3^1v_k^0 \Pi v_i^{\alpha_i} \Pi u_j^{\beta_j}$, with $\alpha_i \le 2$ and $\beta_j \le 4$, in $P(G-\overrightarrow{v_kv_1v_2})$, and this contradiction concludes the proof.
\end{proof}
We are now set up for a contradiction of the initial assumption. Let $u_1,\dots,u_m$ be the neighbors of $v_3$ in the counterclockwise order. Suppose at first, that $G-v_3$ has no generalized wheel as an appropriate subgraph. We can then find a necessary monomial by the argument identical to that from the preceding claim. We then assume that $G-v_3$ contains a generalized wheel.

As we established, there is a vertex in the interior of
$$G'=G-\overrightarrow{v_kv_1v_2}$$
that neighbors both $v_3$ and $v_{k-1}$. By Claim~\ref{claim:c3}, we also know that this vertex needs to neighbor $v_4,\dots,v_{k-2}$. As $G$ has no separating triangles, this vertex has to be $u_m$, since if there was $q<m$ such that $v_{k-1} \sim u_q$, then $v_3v_4u_q$ would form a separating triangle. Suppose now that $v_k$ neighbors any of the $u_1,\dots,u_{m-1}$. Then by Claim~\ref{claim:c3} that vertex would also neighbor $v_4,\dots,v_{k-1}$, which is clearly impossible as $v_{k-1}\sim u_m$. Now assume that $v_k \sim u_m$. By Claim~\ref{claim:c56} $v_{k-1}$ has a neighbor other than $u_m$ in the interior of $C(G)$, hence either $v_{k-1}v_ku_m$ or $v_{k-2}v_{k-1}u_m$ would form a separating triangle. Therefore $v_k \nsim u_m$.

Observe that as $v_{k-1}u_m$ forms a chord in $C(G-v_3)$, if
$$G''=Int(v_1,v_2,u_1,\dots,u_m,v_{k-1},v_k,v_1)$$
had a monomial
$$\eta v_1^0v_2^0v_k^0v_{k-1}^{\alpha_{k-1}}\prod u_j^{\beta_j}, \alpha_{k-1}, \beta_j \le 2,$$
in $P(G''-\overrightarrow{v_kv_1v_2})$, then by Theorem~\ref{thm:zhu} we could find a monomial
$$\eta' v_1^0v_2^0v_k^0\prod v_i^{\alpha_i}\prod_{j \le m} u_j^{\beta_j} \prod_{j > m} u_j^{\gamma_j}$$
with $\alpha_i,\beta_j \le 2, \gamma_j \le 4$, in $P(G'-v_3)$, which in turn would enable us to use the reasoning from the proof of Claim~\ref{claim:c7}. Therefore $P(G''-\overrightarrow{v_kv_1v_2})$ has no such monomial, and by the induction hypothesis $G''$ contains a generalized wheel. As $C(G)$ has no chords, there is a vertex $u_0$ in $int(G'')$ neighboring $u_m,v_{k-1},v_k$ and $v_1$, and as $v_1 \sim u_0$, by Claim~\ref{claim:c4} we have that $v_2 \nsim u_0$. Notice, that if we repeat the reasoning above with $v_{k-1}$ playing the role of $v_3$, we would come to the conclusion, that there is a vertex $u'_0$ in $int(v_1,v_2,v_3,u'_{m'},\dots,u'_1,v_k,v_1)$ neighboring $u'_{m'},v_1,v_2$ and $v_3$. Hence $m=2$ and $u'_0=u_1$, which implies $v_1 \sim u_1$. Moreover, as $G$ is a near-triangulation and has no separating 4-cycle, either $u_0 \sim u_1$ or $v_1 \sim u_2$. But the latter is impossible, as then $v_1,v_2,v_3,u_2$ and $v_1,u_2,v_{k-1},v_k$ would form separating 4-cycles. Finally, we remind that by Claim~\ref{claim:c3}, $u_2$ neighbors all of $v_4, \dots, v_{k-2}$. The structure of $G$ is visualized on Figure~\ref{fig:final}.

We now have to show that $P(G-\overrightarrow{v_kv_1v_2})$ contains a non-vanishing monomial $\eta v_1^0v_2^0v_k^0\Pi v_i^{\alpha_i} \Pi u_j^{\beta_j}$, with $\alpha_i \le 2$ and $\beta_j \le 4$. Considering the structure of $G$, namely that
$$N_{G-\overrightarrow{v_kv_1v_2}}(v_1)=\lbrace u_0,u_1 \rbrace,$$
$$N_{G-\overrightarrow{v_kv_1v_2}}(v_2)=\lbrace v_3,u_1 \rbrace$$
and
$$N_{G-\overrightarrow{v_kv_1v_2}}(v_k)=\lbrace v_{k-1},u_0 \rbrace,$$
it suffices to show that the polynomial of
$$G_0=G-\lbrace v_1,v_2,v_k \rbrace$$
contains a non-vanishing monomial $\eta \Pi v_i^{\alpha_i} \Pi u_j^{\beta_j}$, where
$$\alpha_3, \alpha_{k-1} \le 1, \alpha_i,\beta_0,\beta_1 \le 2, \beta_2 \le 4.$$ By the initial assumption we have $k>5$ (also, for $k=5$ there would be a separating 4-cycle $u_0u_1v_3v_4$ in $G$). Let
$$G_1=G_0-u_0.$$
As $k>5$, $G_1$ satisfies the assumptions of Lemma~\ref{lem:final}, therefore $P(G_1)$ contains a non-vanishing monomial
$$\eta u_1^1u_2^3v_3^1v_{k-1}^0\prod v_i^2.$$
As a result,
$$P(G_0)=P(G_1)(u_1-u_0)(u_2-u_0)(v_{k-1}-u_0)$$
contains a monomial
$$\eta u_0^0u_1^2u_2^4v_3^1v_{k-1}^1\prod v_i^2$$
which is what we needed to obtain the contradiction.
\end{proof}

\begin{figure}[htb]
	\begin{center}
		\includegraphics[scale = 0.55]{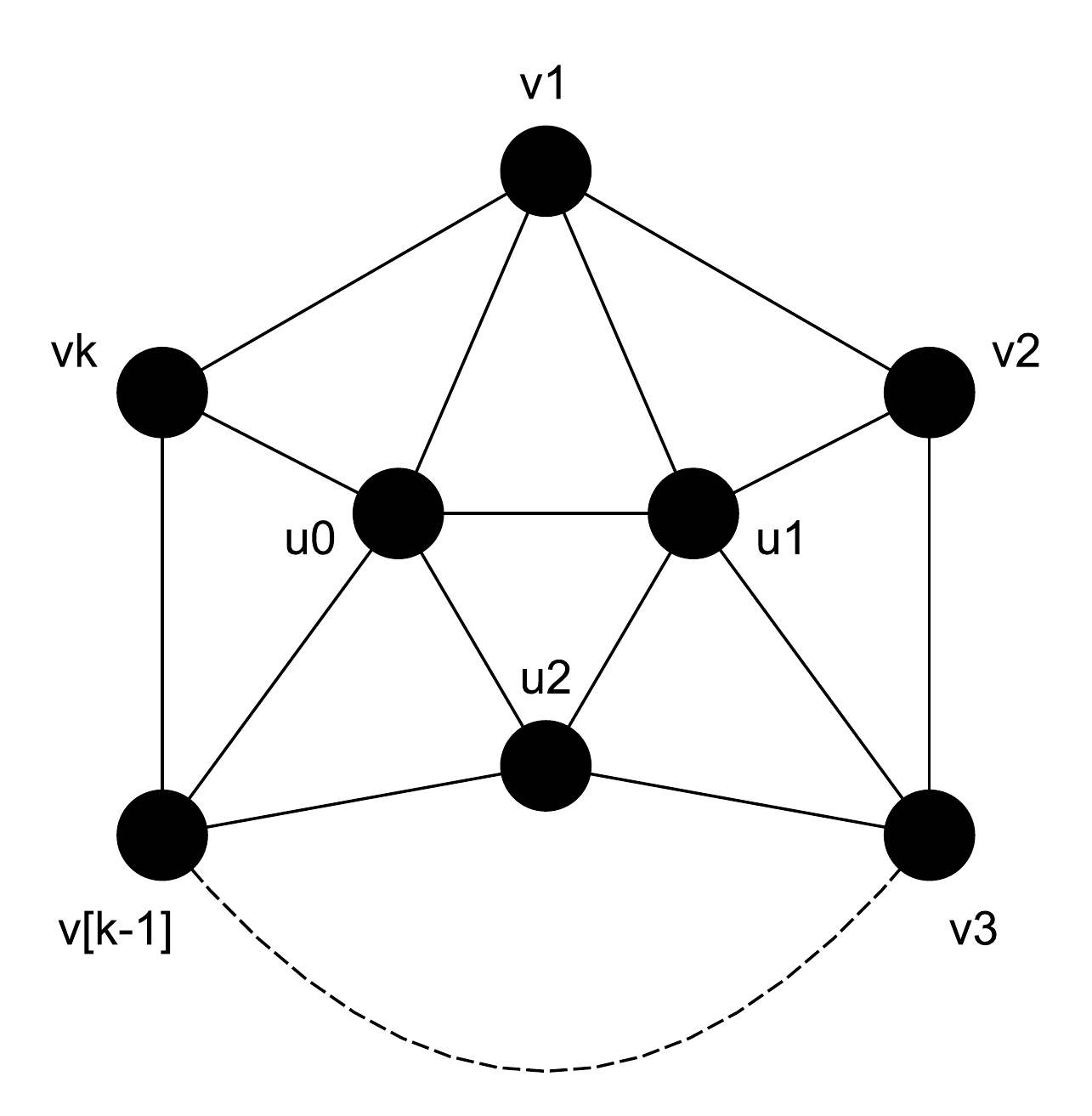}
	\end{center}
	\caption{A structure of the hypothetical minimum counterexample to Theorem~\ref{thm:main} forced by Claims~\ref{claim:c1}~---~\ref{claim:c7}. Vertex $u_2$ neighbours all the vertices between $v_3$ and $v_{k-1}$.} \label{fig:final}
\end{figure}

\end{document}